\UseRawInputEncoding
\documentclass[11pt]{amsart}
\usepackage[UKenglish]{isodate}
\usepackage{amsmath}
\usepackage{amsthm}
\usepackage{amsbsy}
\usepackage{amssymb}
\usepackage{amsfonts}
\usepackage[dvips]{lscape}
\usepackage{xcolor}
\usepackage{amscd}
\usepackage[all,cmtip]{xy}
\usepackage{euscript}
\usepackage{caption}  
\usepackage{parskip}
\usepackage{enumerate}
\usepackage{yfonts}
\usepackage{xcolor}
\usepackage{graphicx}
\usepackage{fancyhdr}
\usepackage{tikz-cd}
\usepackage{colortbl}

\usepackage{fullpage}
\usepackage{verbatim}

\usetikzlibrary{positioning}
\usetikzlibrary{decorations.pathreplacing}

\usepackage[margin=1in]{geometry}
\geometry{letterpaper}

\usepackage[expansion=false]{microtype}

\theoremstyle{plain}

\setlength{\parskip}{0pt}
\setlength{\parsep}{0pt}

\setlength{\parindent}{0in}

\theoremstyle{plain}
\newtheorem{theorem}{Theorem}[section]

\newtheorem{proposition}[theorem]{Proposition}
\newtheorem{lemma}[theorem]{Lemma}
\newtheorem{corollary}[theorem]{Corollary}

\newtheorem{conjecture}[theorem]{Conjecture}

\theoremstyle{remark}
\newtheorem{remark}[equation]{Remark}

\theoremstyle{definition}

\setcounter{tocdepth}{2}
\setlength{\parindent}{0in}

\renewcommand{\Im}{\textup{Im}}

\newcommand{\HH}{\mathbb{H}}

\renewcommand{\vec}[1]{\boldsymbol{#1}}

\newcommand{\R}{\mathbf{R}}
\newcommand{\Z}{\mathbf{Z}}
\newcommand{\A}{\mathbf{A}}
\newcommand{\CC}{\mathbf{C}}
\newcommand{\X}{\mathbf{X}}
\newcommand{\PP}{\mathcal{P}}
\newif\iffinalrun
\iffinalrun
\else
 \fi

\iffinalrun
  \newcommand{\need}[1]{}
  \newcommand{\mar}[1]{}
\else
  \newcommand{\need}[1]{{\tiny *** #1}}
  \newcommand{\mar}[1]{\marginpar{\raggedright\tiny Fix Me:  #1 }}\fi

\usepackage{amssymb,amsmath,amsfonts,amsthm,epsfig,amscd,stmaryrd}
\usepackage{stmaryrd}

\usetikzlibrary{arrows}

\pgfdeclarelayer{background}
\pgfsetlayers{background,main}

\pgfmathsetmacro{\myxlow}{-2}
\pgfmathsetmacro{\myxhigh}{2}
\pgfmathsetmacro{\myiterations}{6}

\title{Higher Fourier interpolation on the plane }
\author{Naser Talebizadeh Sardari}
\address{Penn State department of Mathematics, McAllister Building, Pollock Rd, State College, PA 16802 USA}
\email{nzt5208@psu.edu}
\thanks{  N.T.S. thanks  Professors Stephen D. Miller, Carlo Pagano, Danylo Radchenko, Zeev Rudnick, Peter Sarnak and Masoud Zargar for valuable comments. N.T.S. was supported partially by the National Science Foundation under Grant No. DMS-2015305,
and is grateful to Max Planck Institute for Mathematics in Bonn for its hospitality and financial support.}

\begin{document}

\begin{abstract} Let  $l\geq 6$  be any integer, where $l\equiv 2$ mod $4$.  Suppose that $\mu(\tau)d\tau$ is a measure with bounded  variation and is supported on a compact subset of the complex plane,  where  $\Im(\tau),\Im(-\frac{1}{\tau})>\sin\left(\frac{\pi}{l}\right).$  Let $f(x)=\int e^{i\pi \tau |x|^2}d\mu(\tau)$ and $\mathcal{F}(f)$ be its Fourier transform,  where  $x\in \R^2.$   For every integer  $k\geq 0$ and $x\in \R^2,$
we  express  $f(x)$  in terms of the values of $\frac{d^k f}{du^k}$ and $\frac{d^k \mathcal{F}(f)}{du^k}$
 at $u=\frac{2n}{\lambda},$ where $n$ is a non-negative integer,  $u=|x|^2$ and $\lambda=2\cos\left(\frac{\pi}{l}\right).$   
 
 We show that the condition $\Im(\tau),\Im(-\frac{1}{\tau})>\sin\left(\frac{\pi}{l}\right)$ is optimal.
We also identify the summation formulas among the values of $\frac{d^k f}{du^k}$ and $\frac{d^k \mathcal{F}(f)}{du^k}$
 at $u=\frac{2n}{\lambda},$ with the space of holomorphic modular forms of weight $2k+1$ of the Hecke triangle group $(2,l,\infty)$.  
Using our  formulas for $l=6$ and developing new methods, we prove a conjecture of  Cohn,  Kumar,  Miller,  Radchenko and   Viazovska~\cite[Conjecture 7.5]{Maryna3}. This conjecture  was motivated by the universal optimality of the hexagonal lattice.
\end{abstract}
\maketitle
\section{Introduction}
Let $f:\R^d\to \mathbf{C}$ be a radial Schwartz function. 
 Let $\mathcal{F}(f)(\xi)$ be the  Fourier transformation of $f$
\[
\mathcal{F}(f)(\xi):=\int_{\R^d}f(x)e^{2\pi i \left<x,\xi\right>} dx.
\]
Radchenko and Viazovska recently proved an elegant formula~\cite{inter} for $d=1$ that expresses the value of $f$ at any
given point in terms of the values of $f$ and $\mathcal{F}(f)$ on the
set $\{ \sqrt{|n|}:n\in \Z\}.$ Their method can be generalized to every $d\geq1.$
\\

 Cohn,  Kumar,  Miller,  Radchenko and  Viazovska~\cite[Theorem 1.7]{Maryna3} developed  new Fourier interpolation formulas  to prove the optimality of the $E_8$ and the leech lattice. Their formulas express the value of $f$ at any
given point in terms of the values of $f,$  $\mathcal{F}(f),$ $\frac{d f}{du}$ and $\frac{d \mathcal{F}(f)}{du}$   on the
set $\{ \sqrt{2|n|}:n\geq n_d, \text{ and }n\in \Z\},$ where $u=|x|^2,$ and $(d,n_d)=(8,1), (24,2).$ 
\\

In the last section of their paper the authors ask two deep questions. The first question speculate~\cite[Open problem 7.1]{Maryna3} the existence of interpolation formulas using the values of the higher derivatives $\frac{d^k f}{du^k}$ and $\frac{d^k \mathcal{F}(f)}{du^k}.$ They state that that their methods cannot apply to $k \geq 2$ without serious modification.
In the second question, they speculate~\cite[Open problem 7.3]{Maryna3} the existence of Fourier interpolation formulas for other discrete sets. 
They state that  the special nature of the interpolation points $\sqrt{2n}$ plays an essential role in their proofs. 
\\

One particular case of discrete sets is related to the optimality of the hexagonal lattice.   
They conjectured the following based on their numerical experiments. We discuss  its relation to the hexagonal lattice  in section~\ref{opin}.

\begin{conjecture}\cite[Conjecture 7.5]{Maryna3} Let $r_1, r_2,\dots $ be the positive real numbers of the form
$
(4/3)^{1/4}\sqrt{
j^2+jk+k^2},$ where j and k are integers. Then radial Schwartz
functions $f :\R^2\to \R$ are not uniquely determined by the values of $f(r_n),$  $\mathcal{F}(f)(r_n),$ $\frac{d f}{du}(r_n)$ and $\frac{d \mathcal{F}(f)}{du}(r_n)$ for $n\geq1.$ 
\end{conjecture}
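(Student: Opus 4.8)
The plan is to derive the conjecture as a consequence of the Fourier interpolation formula for $l=6$ established in the body of the paper. For $l=6$ one has $\lambda=2\cos(\pi/6)=\sqrt 3$, so the interpolation nodes occur at $u=|x|^2=\tfrac{2n}{\sqrt 3}$, equivalently at the radii $\rho_n:=(4/3)^{1/4}\sqrt n$ for $n\ge 0$. The first point is the identification
\[
\{r_1,r_2,\dots\}\;=\;\bigl\{\,\rho_m\ :\ m=j^2+jk+k^2\ \text{for some}\ j,k\in\Z,\ m\ge 1\,\bigr\},
\]
that is, the set appearing in the conjecture is exactly the set of nodes indexed by the positive Loeschian numbers — the integers of the form $j^2+jk+k^2$, equivalently the norms from $\Z[\omega]$, equivalently (up to the fixed scaling) the squared lengths of nonzero vectors in the self-dual rescaling of the hexagonal lattice. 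This is a \emph{thin} subset of the full node set $\{\rho_m:m\ge 0\}$: the origin $\rho_0$ is omitted, and $\rho_m$ is omitted whenever $m$ is not of the form $j^2+jk+k^2$ — the smallest such being $m=2$, since a rational prime $\equiv 2\pmod 3$ cannot divide a Loeschian number to an odd power — and by Landau's theorem the Loeschian numbers have density $0$. (For $E_8$ and the Leech lattice the analogous length sets are, in contrast, essentially all of $\{\sqrt{2n}:n\ge 1\}$, which is why the interpolation there is exact and the present phenomenon has no analogue.)

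I would then argue as follows. The interpolation formula for $l=6$ reconstructs an arbitrary radial Schwartz $f\colon\R^2\to\R$ from the data $f$, $\mathcal{F}(f)$, $\tfrac{df}{du}$, $\tfrac{d\mathcal{F}(f)}{du}$ evaluated at the nodes $\rho_m$, $m\ge 0$; moreover — this is the part the paper identifies with holomorphic modular forms of the Hecke triangle group $(2,6,\infty)$ of the relevant weights — the only linear relations satisfied by these data values form a \emph{finite-dimensional} space, say of dimension $t$. Consequently the radial Schwartz functions $f$ with
\[
f(r_n)=\mathcal{F}(f)(r_n)=\frac{df}{du}(r_n)=\frac{d\mathcal{F}(f)}{du}(r_n)=0\qquad\text{for all }n\ge 1
\]
correspond, under the formula, precisely to admissible data supported on the complementary index set $T:=\{0\}\cup\{m\ge 1:\ m\ne j^2+jk+k^2\}$ and satisfying those $t$ relations. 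Since $T$ is infinite — it contains $2,5,6,8,\dots$, and in fact has density $1$ — while the relations are only $t<\infty$ in number, this space of data is infinite-dimensional, and in particular nonzero. Concretely: pick $t+1$ distinct elements $m_1,\dots,m_{t+1}\in T\setminus\{0\}$, set $\mathcal{F}(f)$, $\tfrac{df}{du}$, $\tfrac{d\mathcal{F}(f)}{du}$ to be $0$ at every node and $f$ to be $0$ at every node outside $\{\rho_{m_1},\dots,\rho_{m_{t+1}}\}$; the $t$ relations then become $t$ homogeneous linear equations in the $t+1$ unknowns $f(\rho_{m_1}),\dots,f(\rho_{m_{t+1}})$, so there is a nonzero solution. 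The radial function $f$ produced from this data by the interpolation formula is Schwartz (a finite linear combination of the interpolation basis functions), is real-valued because the construction is equivariant for complex conjugation and the prescribed data are real, and is not identically zero because its data are not; yet $f(r_n)=\mathcal{F}(f)(r_n)=\tfrac{df}{du}(r_n)=\tfrac{d\mathcal{F}(f)}{du}(r_n)=0$ for all $n\ge 1$. Comparing $f$ with $0$ proves the conjecture; running the same argument over all non-Loeschian $m\ge 1$ in fact exhibits an infinite-dimensional space of counterexamples.

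The substantive difficulty is not this deduction but its input: the Fourier interpolation formula attached to the radii $(4/3)^{1/4}\sqrt n$, controlling $f$, $\mathcal{F}(f)$ \emph{and} their first $u$-derivatives simultaneously, together with the identification of its relation space with modular forms of $(2,6,\infty)$. This is exactly the regime that Cohn--Kumar--Miller--Radchenko--Viazovska single out as inaccessible to their techniques — both because those techniques exploit the special arithmetic of the points $\sqrt{2n}$ and because they do not persist past first order — and it is where the new methods of the paper are needed: the structure of the Hecke triangle group $(2,6,\infty)$ and its (weakly) holomorphic modular forms of the pertinent weights, the contour-integral construction of the interpolation basis functions, and the growth estimates ensuring that the defining series converge to Schwartz functions. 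I expect the construction of, and the estimates for, this interpolation formula to be the main obstacle; granted it, the conjecture follows as above, essentially by a dimension count.
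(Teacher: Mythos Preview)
Your deduction rests on an input the paper does not supply. You assume a single interpolation formula for $l=6$ that (i) involves \emph{both} the values $f,\mathcal{F}(f)$ and the first derivatives $\tfrac{df}{du},\tfrac{d\mathcal{F}(f)}{du}$ at all nodes $\rho_m$, (ii) holds for \emph{every} radial Schwartz function, and (iii) has a finite-dimensional relation space. But Theorem~\ref{mainthmder} is a \emph{separate} formula for each fixed $k$, reconstructing $f^\varepsilon$ from the $k$-th derivatives alone, and --- the crucial point --- it holds only for the restricted class $f(x)=\int e^{i\pi\tau|x|^2}\,d\mu(\tau)$ with $\mu$ compactly supported in $\{\Im(\tau),\Im(-1/\tau)>\sin(\pi/l)\}$, \emph{not} for all Schwartz functions. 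Indeed the paper proves (Corollary~\ref{mainthm22}, Theorem~\ref{mainthm2}) that the formula fails outside this class: the functions $r^\varepsilon(\gamma,\tau;x)$ vanish at every node yet are nonzero. So your ``data $\mapsto$ function'' map, on which the dimension count relies, is not available. Concretely, your proposed construction --- a finite linear combination of the $a^\varepsilon_{m,0}$ with $m$ non-Loeschian --- does kill $f$ and $\mathcal{F}(f)$ at every Loeschian node, but gives no control whatsoever over $\tfrac{df}{du}$ and $\tfrac{d\mathcal{F}(f)}{du}$ there; the $k=0$ basis says nothing about first derivatives, and there is no combined basis in the paper.

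The paper's proof of Theorem~\ref{mainconj} goes in a different direction and in fact exploits the \emph{failure} of the interpolation formula rather than its validity. It starts from the monodromy functions $r^\varepsilon(\tau;x)$, which vanish at \emph{all} nodes $\sqrt{2m/\sqrt{3}}$ automatically; the remaining task is to force the first $u$-derivative to vanish at the Loeschian nodes. For this the paper (Lemma~\ref{Acons}) first covers the Loeschian integers by a periodic set $A$ of arbitrarily small density, and then (Theorem~\ref{strongthm}, Propositions~\ref{mainest}--\ref{mainpropp}) averages $r^\varepsilon(\tau;x)$ in $\tau$ against a carefully built measure $\mu$, constructed via a separating-hyperplane/weak$^*$-compactness argument on a space of probability measures, so that $\int s^\varepsilon(\tau;\sqrt{2m/\sqrt{3}})\,d\mu=0$ for $m\in A\setminus\{a\}$ while the value at $a$ survives. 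This is where the real work is, and it is not a dimension count.
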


Our main goal in the paper is to address these two questions. We develop new interpolation formulas using the values of the higher derivatives on new discrete sets. In particular,  we prove the above conjecture in Theorem~\ref{mainconj}. We restrict our formulas to $d=2$ for clarity of exposition.


\subsection{Interpolation with values} Suppose that $l\geq 6$ is an integer and  $l\equiv 2$ mod $4$. In this section, we discuss our main theorem in the special case $k=0$ and $f(x)=e^{i\pi \tau |x|^2},$ where $\Im(\tau),\Im(-\frac{1}{\tau})>\sin\left(\frac{\pi}{l}\right).$
 Let $\mathcal{F}(f)(\xi)$ be the  Fourier transformation of $f$
\[
\mathcal{F}(f)(\xi):=\int_{\R^2}f(x)e^{2\pi i \left<x,\xi\right>} dx.
\]
It is well-known that $\mathcal{F}(f)(\xi)=\frac{i}{\tau}e^{i\pi \frac{-1}{\tau} |\xi|^2}$. We define
\[
f^{\varepsilon}:=f+\varepsilon\mathcal{F}(f),
\]
where $\varepsilon=\pm1.$ Note that $f^{\varepsilon}$ is an eigenfunction of the Fourier transformation with eigenvalue $\varepsilon$ and 
\[
f=\frac{f^++f^-}{2}.
\]

 Next, we introduce a family of  $\pm1$ eigenfunctions for the Fourier transformation.  Let $\Gamma$ be the  triangle group $(2,l,\infty);$ see Figure~\ref{heckef}. Let 
 \begin{equation}\label{dim1}
 d_{\varepsilon}= \begin{cases} 0,  \text{ if } \varepsilon=+1, \\ 1, \text{ if } \varepsilon=-1.\end{cases}
 \end{equation}
Later, we identify $d_{\varepsilon}$ with the dimension of a specific space of modular forms of $\Gamma$.  
 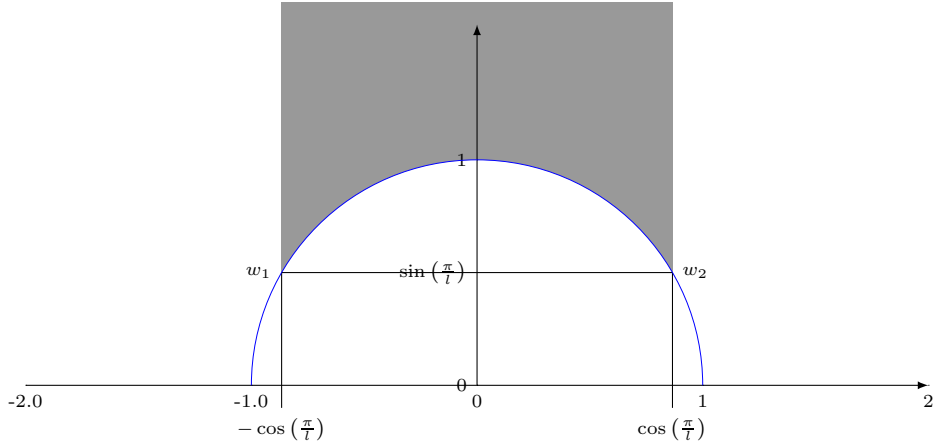
\begin{figure}
\centering
\begin{tikzpicture}[scale=3]
    \draw[-latex](\myxlow,0) -- (\myxhigh ,0);
    \pgfmathsetmacro{\succofmyxlow}{\myxlow+1}
    \foreach \x in {\myxlow,\succofmyxlow,...,\myxhigh}
    {   \draw (\x,-0) -- (\x,-0.0) node[below,font=\tiny] {\x};
    }
        {   \draw (-0.866,-0.1)node[below,font=\tiny]{$-\cos\left(\frac{\pi}{l}\right)$}--(-0.866,0.5)node[left,font=\tiny]{$w_1$}--(0,0.5)node[left,font=\tiny]{$\sin\left(\frac{\pi}{l}\right)$}  --(0.866,0.5)node[right,font=\tiny]{$w_2$} -- (0.866,-0.1) node[below,font=\tiny] {$\cos\left(\frac{\pi}{l}\right)$};
    }
    
    \foreach \y  in {0,1}
    {   \draw (0,\y) -- (-0.0,\y) node[left,font=\tiny] {\pgfmathprintnumber{\y}};
    }
    \draw[-latex](0,-0.0) -- (0,1.6);
    \begin{scope}   
        \clip (\myxlow,0) rectangle (\myxhigh,1.2);

            {   \draw[very thin, blue] (1,0) arc(0:180:1);
            }   
    \end{scope}
    \begin{scope}
            \begin{pgfonlayer}{background}
            \clip (-0.866,0) rectangle (0.866,1.7);
            \clip   (1,1.7) -| (-1,0) arc (180:0:1) -- cycle;
            \fill[gray,opacity=0.8] (-1,-1) rectangle (1,2);
        \end{pgfonlayer}
    \end{scope}
\end{tikzpicture}
\captionof{figure}{Fundamental domain for the Hecke triangle $(2,l,\infty)$}\label{heckef}
\end{figure}
Let $\phi_{n,0}^{\varepsilon}(z)$ the weakly holomorphic modular form of weight $1,$ multiplier $\varepsilon$ and of depth $n\geq d_{\varepsilon}$ defined in~\eqref{defeqqq}. It follows that we only have weakly holomorphic modular forms of weight 1 if $l\equiv 2$ mod 4 and $l\geq 3$.  In particular when $l=6$,
\[
\phi_{0,0}^{+}(z):=\sum_{x,y\in \Z} e^{2\pi i z \frac{( x^2+xy+y^2)}{\sqrt{3}}}
\]
 is the theta series associated to the normalized hexagonal lattice in $\R^2.$ 
\\

 Let \[w_1:=-\cos\left(\frac{\pi}{l}\right)+i\sin\left(\frac{\pi}{l}\right), \text{ and } w_2=\cos\left(\frac{\pi}{l}\right)+i\sin\left(\frac{\pi}{l}\right).\] We define
\begin{equation}\label{1bas}
a_n^{\varepsilon}(x):= \frac{1}{\lambda}\int_{w_1}^{w_2}\phi_{n,0}^{\varepsilon}(z)e^{\pi i z |x|^2} dz,
\end{equation}
where $\lambda=2\cos\left(\frac{\pi}{l}\right).$   We show that $a_n^{\varepsilon}(x)$ is a radial Schwartz $\varepsilon$ eigenfunctions of the Fourier transformation in $\R^2$.  
 Moreover, for $m,n \geq d_{\varepsilon}$
\[
a_n^{\varepsilon}\left(\sqrt{\frac{2m}{\lambda}}\right)=\delta_{m,n}:=\begin{cases} 1, \text{ if }m=n, \\ 0, \text{ otherwise.}\end{cases}
\]  We state a version of our interpolation formula for $f^{\varepsilon}.$ 

\begin{corollary}\label{mainthm}  
We have
\begin{equation}\label{mainform}f^{\varepsilon}(x)=\sum_{n\geq d_{\varepsilon}}a_n^{\varepsilon}(x)f^{\varepsilon}\left(\sqrt{\frac{2n}{\lambda}}\right).
\end{equation}
\end{corollary}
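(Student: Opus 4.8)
The plan is to adapt the method of Radchenko--Viazovska to the Hecke triangle group $\Gamma=(2,l,\infty)$. Write $q_\sigma:=e^{2\pi i\sigma/\lambda}$. Since $f=e^{i\pi\tau|x|^2}$ and $\mathcal{F}(f)(\xi)=\frac{i}{\tau}e^{-i\pi|\xi|^2/\tau}$, one has $f^{\varepsilon}\bigl(\sqrt{2n/\lambda}\bigr)=q_\tau^{\,n}+\varepsilon\tfrac{i}{\tau}q_{-1/\tau}^{\,n}$, so \eqref{mainform} is equivalent to
\begin{equation}\label{planred}
\mathcal{K}^{\varepsilon}(\tau,x)+\varepsilon\,\tfrac{i}{\tau}\,\mathcal{K}^{\varepsilon}\bigl(-\tfrac{1}{\tau},x\bigr)=e^{i\pi\tau|x|^2}+\varepsilon\,\tfrac{i}{\tau}\,e^{-i\pi|x|^2/\tau},\qquad\mathcal{K}^{\varepsilon}(\sigma,x):=\sum_{n\geq d_{\varepsilon}}a_n^{\varepsilon}(x)\,q_\sigma^{\,n}.
\end{equation}
First one records that $|a_n^{\varepsilon}(x)|$ grows subexponentially in $n$ (up to a Gaussian factor in $x$) --- this follows from \eqref{1bas} and standard bounds on Fourier coefficients of weakly holomorphic modular forms --- so that, since the values $f^{\varepsilon}(\sqrt{2n/\lambda})$ decay geometrically in $n$, both series in \eqref{planred} converge absolutely and locally uniformly, and, by the eigenfunction property of each $a_n^{\varepsilon}$, define radial Schwartz $\varepsilon$-eigenfunctions. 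It then remains to prove \eqref{planred} for $\tau$ in the region $R:=\{\sigma:\Im(\sigma)>\sin(\pi/l),\ \Im(-1/\sigma)>\sin(\pi/l)\}$.

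The second step is to rewrite the sum as a contour integral. For $\tau\in R$ the series defining $K^{\varepsilon}(\tau,\cdot):=\sum_{n\geq d_{\varepsilon}}\phi_{n,0}^{\varepsilon}(\cdot)\,q_\tau^{\,n}$ converges on the segment $[w_1,w_2]$, and interchanging summation with the integral in \eqref{1bas} gives $\mathcal{K}^{\varepsilon}(\sigma,x)=\frac{1}{\lambda}\int_{w_1}^{w_2}K^{\varepsilon}(\sigma,z)\,e^{i\pi z|x|^2}\,dz$. The key structural input --- the Hecke--triangle analogue of the generating-function identity of \cite{inter} --- is that $K^{\varepsilon}(\sigma,z)$ continues, in the variable $z$, to a meromorphic modular form of weight $1$ and multiplier $\varepsilon$ for $\Gamma$, holomorphic on $\HH$ apart from simple poles along the orbit $\Gamma\sigma$, with residue $\tfrac{\lambda}{2\pi i}$ at $z=\sigma$, and vanishing at the cusp. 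One proves this from the construction \eqref{defeqqq}: writing $\phi_{n,0}^{\varepsilon}=p_n(J)\,\psi^{\varepsilon}$ with $J$ the Hauptmodul of $\Gamma$ normalised by $J\sim q_z^{-1}$ at the cusp, $\psi^{\varepsilon}$ a fixed weakly holomorphic weight-$1$ form of multiplier $\varepsilon$ of minimal depth, and $p_n$ the degree-$n$ Faber polynomial fixed by $\phi_{n,0}^{\varepsilon}=q_z^{-n}+O(q_z)$ --- for $\varepsilon=-1$ the extra condition annihilating the one-dimensional space of weight-$1$ cusp forms of multiplier $-1$ is exactly what makes $d_{-1}=1$ --- the Faber generating identity yields $K^{\varepsilon}(\sigma,z)=\psi^{\varepsilon}(z)\,Q_{\varepsilon}\bigl(J(z),J(\sigma)\bigr)/\bigl(J(z)-J(\sigma)\bigr)$ for an explicit polynomial $Q_{\varepsilon}$, from which the claimed pole structure, residue, and cusp behaviour are read off.

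The third step is the contour deformation. Since $\tau\in R$, the poles $z=\tau$ and $z=-1/\tau$ of $K^{\varepsilon}(\tau,\cdot)$ both lie above the line $\Im z=\sin(\pi/l)$ carrying $[w_1,w_2]$; after translating $\tau$ by $\lambda\Z$ one arranges $|\Re\tau|<\cos(\pi/l)$ as well. Push $[w_1,w_2]$ upward inside the vertical strip $|\Re z|\le\cos(\pi/l)$ toward the cusp: the far horizontal piece vanishes by the cusp decay of $K^{\varepsilon}$ against the Gaussian decay of $e^{i\pi z|x|^2}$, and the contributions of the two vertical sides, handled by the quasi-periodicity $K^{\varepsilon}(\tau,z+\lambda)=\mu_{\varepsilon}K^{\varepsilon}(\tau,z)$ and the weight-$1$ inversion law $K^{\varepsilon}(\tau,-1/z)=\nu_{\varepsilon}\,z\,K^{\varepsilon}(\tau,z)$ (inherited from the $\phi_{n,0}^{\varepsilon}$, with $\nu_{\varepsilon}^2=-1$), are checked to cancel against the corresponding terms produced by carrying out the same manipulation on the second integral $\mathcal{K}^{\varepsilon}(-1/\tau,x)$. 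What survives are the residues: at $z=\tau$ one gets $\tfrac{1}{\lambda}\cdot2\pi i\cdot\tfrac{\lambda}{2\pi i}\,e^{i\pi\tau|x|^2}=f(x)$, while the inversion law turns the residue at $z=-1/\tau$ into $\varepsilon\,\tfrac{i}{\tau}\,e^{-i\pi|x|^2/\tau}=\varepsilon\,\mathcal{F}(f)(x)$ (the factor $\nu_{\varepsilon}$ combining with $i/\tau$ to give exactly $\varepsilon$). This establishes \eqref{planred} on a nonempty open subset of $R$; as both sides are holomorphic in $\tau$ and $R$ is connected, analytic continuation gives \eqref{planred}, and hence \eqref{mainform}, on all of $R$. (When $\tau\notin R$ one of these two poles no longer lies above $[w_1,w_2]$, the corresponding residue is not picked up, and the formula fails --- this is the mechanism behind the optimality statement, which is proved separately.)

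The hard part is the generating-function identity of the second step: exhibiting the fixed form $\psi^{\varepsilon}$ and the polynomial $Q_{\varepsilon}$ realising simultaneously the depth-$d_{\varepsilon}$ normalisation, the residue at $z=\sigma$, and the vanishing at the cusp, and --- the more delicate point --- justifying the meromorphic continuation of $K^{\varepsilon}(\sigma,\cdot)$ past the region where its defining series converges, where the modular transformation laws of the $\phi_{n,0}^{\varepsilon}$ must be invoked as the very \emph{definition} of the continuation. This is where the arithmetic of $\Gamma=(2,l,\infty)$, and in particular the congruence $l\equiv2\bmod4$, enters: that congruence is precisely the condition under which a weight-$1$ multiplier system $\varepsilon$ on $\Gamma$ --- hence the whole family $\phi_{n,0}^{\varepsilon}$ --- exists. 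Once this identity is available, the remaining ingredients --- the subexponential bound, the residue computation, the cancellation of the boundary terms, and the analytic continuation over $R$ --- are routine.
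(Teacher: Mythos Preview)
Your overall architecture matches the paper's: reduce to the functional equation \eqref{planred}, express $\mathcal{K}^\varepsilon$ as the arc integral of the kernel $K^\varepsilon(\sigma,z)=\phi_{d}^\varepsilon(z)f_{d-1}^{-\varepsilon}(\sigma)/(J(\sigma)-J(z))$ (your ``hard part'' is exactly Proposition~\ref{genf}), and extract the Gaussians as residues. But there is a real gap in your contour step. You claim that for $\sigma\in R$ the series $\sum_n\phi_{n,0}^\varepsilon(z)\,q_\sigma^{\,n}$ converges on $[w_1,w_2]$, so that $\mathcal{K}^\varepsilon(\sigma,x)$ is the arc integral there; this fails, since on the arc $|\phi_{n,0}^\varepsilon(z)|\asymp e^{2\pi n/\lambda}$ (Lemma~\ref{inbd}), forcing $\Im\sigma>1$, not merely $\Im\sigma>\sin(\pi/l)$. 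More fatally, the modularity of $K^\varepsilon$ in its \emph{first} variable gives $K^\varepsilon(\tau,z)+\varepsilon\tfrac{i}{\tau}K^\varepsilon(-1/\tau,z)\equiv 0$ (this is \eqref{eqnt1}). So if \emph{both} $\mathcal{K}^\varepsilon(\tau,x)$ and $\mathcal{K}^\varepsilon(-1/\tau,x)$ were given by the same arc integral, your ``push to the cusp'' deformation would output only the tautology $0=0$: the vertical sides, the residues, and the arc pieces all cancel in pairs between the two integrals, and nothing survives. (Separately, the vertical sides of a single integral do \emph{not} cancel by periodicity, because the factor $e^{i\pi z|x|^2}$ is not $\lambda$-periodic in $z$.)

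What the paper does instead (Proposition~\ref{ccontour}) is to analytically continue $\mathcal{K}^\varepsilon(\tau,x)$ from $\Im\tau>1$ down to $D_1$ by deforming the \emph{arc} to the straight \emph{chord} $\gamma$ between $w_1$ and $w_2$ --- a bounded deformation with the same endpoints, hence no boundary terms at all. For $\tau\in D_1$ this crosses exactly the pole at $z=\tau$, and the residue is $e^{i\pi\tau|x|^2}$; that extra term, present for $\tau\in D_1$ but absent for $-1/\tau\in SD_1\subset\Omega_0$ (where the arc formula is still valid), is precisely what survives on the right of \eqref{planred} after the arc integrals cancel by \eqref{eqnt1}. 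Only \emph{after} this continuation is established does one deduce the sharp bound $|a_n^\varepsilon(x)|\ll e^{2\pi\sin(\pi/l)(1+\delta)n/\lambda}$ (Proposition~\ref{impbd}) that makes the Fourier series converge on all of $R$; assuming such a bound at the outset, as you do, is circular.
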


Corollary~\ref{mainthm} is a special case of Theorem~\ref{mainthmder}. 
Corollary~\ref{mainthm} generalizes the interpolation formula of Radchenko and  Viazovska~\cite{inter} from $l=\infty$ and  $d=1$ to every $l$ and  $d=2,$ where $l\geq 6$ and  $l\equiv 2 $ mod $4.$ \footnote{We have learned from Radchenko that he and Viazovska 
have also considered other triangle groups $(2,l,\infty)$
in this context.}
They used the values at $\sqrt{n}$ and the weakly holomorphic modular of weight $3/2$ which are invariant by the theta group (Hecke triangle group $(2,\infty,\infty)$). 
We use the weakly holomorphic modular forms of weight $1$ which are invariant by the Hecke triangle group $(2,l,\infty)$. Our integration is  over the arc between $w_1$ and $w_2$ and it is   the whole semicircle when $l=\infty$ as defined in~\cite{inter}. Next, we discuss some new features that occur in our work which are  different from that of Radchenko and  Viazovska~\cite{inter}.
\\

 Radchenko and  Viazovska proved their interpolation formula for every  $\tau$ in the upper half-plane and from this they deduced that formula~\eqref{mainform} holds for every  
 radial Schwartz function $f$ in $\R$. We show that formula~\eqref{mainform} is false for some finite  linear combination of Gaussian (unless $l=\infty$). In fact, the bound  $\Im(\tau),\Im(-\frac{1}{\tau})>\sin\left(\frac{\pi}{l}\right)$  is optimal. We  show this by constructing for every $\epsilon>0$ a finite linear combination of Gaussian $\sum a_ie^{i\pi \tau_i r^2}$ which vanishes at all points $\sqrt{\frac{2n}{\lambda}},$ where  $\Im(\tau_i),\Im(-\frac{1}{\tau_i})>\sin\left(\frac{\pi}{l}\right)-\epsilon$ for every $i$.  We realize these functions conceptually by monodromy around $w_1$ and its orbit  $\Gamma w_1$ in section~\ref{monodromy}. We  introduce them explicitly and state our next theorem. 
 Let 
\[
S:=\begin{bmatrix}  0 & 1 \\-1 &0
\end{bmatrix}  , \text{ and }T:=\begin{bmatrix}  1 & \lambda \\0 &1
\end{bmatrix} \text{ and } V:=TS. 
\]
It is well-known that $\Gamma$ is generated by $T$ and $S$, and $V^l=I.$ Let $|^{\varepsilon}_k \gamma$ denote the slash operator of weight $k$ and multiplier $\varepsilon$ associated to $\gamma\in \Gamma$. In particular, 
\[
\begin{split}
f(z)|^{\varepsilon}_k S=\varepsilon \left(\frac{ i}{z}\right)^kf\left(\frac{-1}{z}\right) ,
\\
f(z)|^{\varepsilon}_k T=f\left(z+ \lambda\right).
\end{split}
\] 
We define
\[
r^{\varepsilon}(\gamma,\tau;x):=e^{i\pi \tau |x|^2}|^{-\varepsilon}_1 (T^{-1}-I)(1+V+\dots+V^{l-1})\gamma,
\]
where $\gamma\in \Gamma,$  $\tau$ is in the upper half-plane and $x\in \R^2$. 
 \begin{corollary}\label{mainthm22} $r^{\varepsilon}(\gamma,\tau;x)$ is an eigenfunction of the Fourier transformation with respect to $x$ with eigenvalue $\varepsilon$  for any $\gamma\in \Gamma.$ Moreover, 
\[
r^{\varepsilon}\left(\gamma,\tau;\sqrt{\frac{2n}{\lambda}}\right)=0
\]
for every  integer $n\geq0.$
\end{corollary}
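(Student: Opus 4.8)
The plan is to derive both assertions from two elementary inputs: an intertwining relation between the Fourier transform and the $S$--slash, and a short identity in the group ring $\Z[\Gamma]$. Write $G(\tau;x):=e^{i\pi\tau|x|^2}$, let $\mathcal{F}_x$ denote the Fourier transform in the $x$--variable, and set $N:=1+V+\dots+V^{l-1}\in\Z[\Gamma]$, so that $r^\varepsilon(\gamma,\tau;x)=G|^{-\varepsilon}_1\bigl((T^{-1}-I)N\gamma\bigr)$. Expanding $(T^{-1}-I)N\gamma$ as a finite $\Z$--linear combination of elements of $\Gamma$ exhibits $r^\varepsilon(\gamma,\tau;\cdot)$ as a finite linear combination of Gaussians $e^{i\pi w|x|^2}$ with $w\in\HH$, hence a radial Schwartz function for which both statements are meaningful; since the whole argument stays inside the finite span of these Gaussians, there is no analytic difficulty. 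I expect the only delicate points to be bookkeeping: pinning down the multiplier conventions so that the intertwining relation carries the right sign, and noting that $|^{-\varepsilon}_1$ is a genuine right action of $\Gamma$ (equivalently of $\Z[\Gamma]$) on functions on $\HH$, which holds because the weight $1$ is an integer and $-I$ acts trivially.

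For the eigenfunction property I would first record, from $\mathcal{F}(e^{i\pi\tau|x|^2})=\tfrac{i}{\tau}e^{i\pi(-1/\tau)|x|^2}$ on $\R^2$ together with the definition of the slash operator, the identity $\mathcal{F}_x G=-\varepsilon\,\bigl(G|^{-\varepsilon}_1 S\bigr)$, the sign $-\varepsilon$ cancelling the $\varepsilon$ built into $|^{-\varepsilon}_1 S$. Because $|^{-\varepsilon}_1\gamma$ merely substitutes $\gamma$ into the $\tau$--variable and multiplies by a function of $\tau$ alone, it commutes with $\mathcal{F}_x$; extending linearly to $\Z[\Gamma]$ this gives, for every $w\in\Z[\Gamma]$,
\[
\mathcal{F}_x\bigl(G|^{-\varepsilon}_1 w\bigr)=\bigl(\mathcal{F}_x G\bigr)|^{-\varepsilon}_1 w=-\varepsilon\,G|^{-\varepsilon}_1(Sw),
\]
where $Sw$ denotes left multiplication in the group ring. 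The algebraic input is that $VN=N$ (because $V^l=I$) and $T^{-1}V=S$, so $(T^{-1}-I)N=T^{-1}VN-N=SN-N=(S-I)N$ and hence $(S+I)(T^{-1}-I)N=(S^2-I)N$. Taking $w=(T^{-1}-I)N\gamma$ then yields
\[
\mathcal{F}_x(r^\varepsilon)-\varepsilon r^\varepsilon=-\varepsilon\,G|^{-\varepsilon}_1\bigl((S+I)(T^{-1}-I)N\gamma\bigr)=-\varepsilon\,G|^{-\varepsilon}_1\bigl((S^2-I)N\gamma\bigr)=0,
\]
the last equality because $S^2$ (equivalently $-I$) acts trivially under $|^{-\varepsilon}_1$, i.e.\ $G|^{-\varepsilon}_1 S^2=G$. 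Thus $\mathcal{F}\bigl(r^\varepsilon(\gamma,\tau;\cdot)\bigr)=\varepsilon\,r^\varepsilon(\gamma,\tau;\cdot)$.

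For the vanishing at the nodes, fix an integer $n\ge0$ and set $x_n:=\sqrt{2n/\lambda}$. Since $|x_n|^2=2n/\lambda$, the function $\tau\mapsto G(\tau;x_n)=e^{2\pi i n\tau/\lambda}$ has period $\lambda$, and because the weight-$1$ automorphy factor and multiplier of $T$ are trivial this says $G(\,\cdot\,;x_n)|^{-\varepsilon}_1 T=G(\,\cdot\,;x_n)$, whence $G(\,\cdot\,;x_n)|^{-\varepsilon}_1(T^{-1}-I)=0$. Evaluation at $x=x_n$ affects only the $x$--variable, so it commutes with the $\tau$--slash operators; using that $|^{-\varepsilon}_1$ is a right action, so that the leftmost factor $(T^{-1}-I)$ of the group-ring element may be applied first, I would conclude
\[
r^\varepsilon\Bigl(\gamma,\tau;\sqrt{\tfrac{2n}{\lambda}}\Bigr)=G(\,\cdot\,;x_n)|^{-\varepsilon}_1\bigl((T^{-1}-I)N\gamma\bigr)=\Bigl(G(\,\cdot\,;x_n)|^{-\varepsilon}_1(T^{-1}-I)\Bigr)|^{-\varepsilon}_1(N\gamma)=0,
\]
which is the second assertion. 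The main thing to watch is consistency of the conventions (the two slash-operator manipulations above, and the sign in $\mathcal{F}_x G=-\varepsilon(G|^{-\varepsilon}_1 S)$ — it is exactly the multiplier $-\varepsilon$ in the definition of $r^\varepsilon$ that forces the Fourier eigenvalue to be $\varepsilon$); everything else is formal.
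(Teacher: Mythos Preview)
Your proof is correct and follows the same approach as the paper: the paper deduces the corollary from Theorem~\ref{mainthm2} (the $k$-th derivative version), whose proof rests on exactly the two ingredients you isolate, namely the intertwining $\mathcal{F}_x\bigl(G|^{-\varepsilon}_1\gamma\bigr)=-\varepsilon\,G|^{-\varepsilon}_1 S\gamma$ (the paper's Lemma~\ref{fourr}) and the group-ring identity $S(T^{-1}-I)N=-(T^{-1}-I)N$, together with the $T$-periodicity of $G(\,\cdot\,;x_n)$ for the vanishing at the nodes. Your derivation of the key identity via $(T^{-1}-I)N=T^{-1}VN-N=SN-N=(S-I)N$ and hence $(S+I)(T^{-1}-I)N=(S^2-I)N=0$ is a clean repackaging of the paper's computation $S(T^{-1}-I)N=\sum_i V^i-T^{-1}V^{i+1}=-(T^{-1}-I)N$, but the content is identical.
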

Corollary~\ref{mainthm22} is a special case of Theorem~\ref{mainthm2}.
Note that if $\tau$ is near $w_2$ and $\gamma=id$ then 
\[
r^{\varepsilon}(id,\tau;x)=\sum_{i} \alpha_i e^{i\pi \tau_i |x|^2},
\]
where
$\Im(\tau_i),\Im(-\frac{1}{\tau_i})>\sin\left(\frac{\pi}{l}\right)-\epsilon.$ This shows that the bound  $\Im(\tau),\Im(-\frac{1}{\tau})>\sin\left(\frac{\pi}{l}\right)$  is optimal. 
We use the following key identity in the group algebra $\Z[PSL_2(\R)]$
\[
S(T^{-1}-I)(1+V+\dots+V^{l-1})=-(T^{-1}-I)(1+V+\dots+V^{l-1}).
\]
for the proof of Theorem~\ref{mainthm2}.
\begin{remark}
Note that the number of integers $n$ such that $\sqrt{\frac{2n}{\lambda}}<X$  is about $\cos\left(\frac{\pi}{l}\right)X^2.$ Radchenko and  Viazovska~\cite{inter} in their interpolation formula for   $\R$ uses the values of  $f^{\varepsilon}$ at $\sqrt{n}$ which uses $X^2$ number of points less than $X$.  After stating~\cite[Conjecture 7.5]{Maryna3}, the authors speculate that any interpolation formula in $\R^2$ for all Schwartz functions should also contain at least $X^2$ nodes less than $X$ based on the interpolation formulas in dimensions 1,8 and 24. Corollary ~\ref{mainthm} and Corollary~\ref{mainthm2} are compatible with this speculation. As we introduce a refined class of functions where the interpolation formula holds with given values at  $\sqrt{\frac{2n}{\lambda}}$.\end{remark}

\subsection{Interpolation with higher   derivatives}
 Let $u:=|x|^2$ and $k\geq 0.$ Suppose that  
 \[f(x)=\int e^{i\pi \tau |x|^2}d\mu(\tau),\]  where $\mu$ is a measure with bounded  variation  and  supported on a compact subset of $\Im(\tau),\Im(-\frac{1}{\tau})>\sin\left(\frac{\pi}{l}\right)$ . We may consider $f$ and $\mathcal{F}(f)$ as a smooth function of $u$ in $\R^{+}.$
Next, we develop an interpolation formula for $f$ using  the values of the  $k$-derivatives $\frac{d^k}{du^k} $ of $f$ and $\mathcal{F}(f)$ at $u=\frac{2n}{\lambda}>0$. 
\\

Let $M_k^{\varepsilon}(\Gamma)$ be the space of weight $k$ modular forms with multiplier $\varepsilon.$
Let $d(\varepsilon,k)=\dim \left(M_{2k+1}^{-\varepsilon}(\Gamma)\right).$ In section~\ref{basisk},  we define the space of weakly holomorphic modular forms  by allowing a pole at cusp $\infty.$
 We  introduce a unique weakly holomorphic modular form of weight $-2k+1$  of $\Gamma$  for every $n\geq d(\varepsilon,k)$ and denote it by $\phi^{\varepsilon}_{n,k}(z).$ We define
\[
a_{n,k}^{\varepsilon}(x):=  \frac{1}{\lambda}\int_{w_1}^{w_2}\phi_{n,k}^{\varepsilon}(z)\frac{e^{\pi i z |x|^2}}{(iz\pi)^k}  dz.
\]
We show that $a_{n,k}^{\varepsilon}(x)$ is a radial Schwartz $\varepsilon$ eigenfunctions of the Fourier transformation in $\R^2$.  Moreover, for $m,n\geq d(\varepsilon,k)$
\begin{equation}\label{baseff}
\frac{d^k}{du^k} a_{n,k}^{\varepsilon}\left(\sqrt{\frac{2m}{\lambda}}\right)
=\delta(m,n).
\end{equation}
  In particular when $k=0$,   $a_{n,0}^{\varepsilon}(x)= a_{n}^{\varepsilon}(x),$ which was defined in \eqref{1bas}, and  $d(\varepsilon,0)=d_\varepsilon$ which was defined in \eqref{dim1}.  We state a version of our interpolation formula for $f$ with the higher derivatives. 
\begin{theorem}\label{mainthmder} Let $k\geq 0$ and $f^{\varepsilon}$ and $a^{\varepsilon}_{n,k}$ be as above.
We have 
\[f^{\varepsilon}(x)=\sum_{n\geq d(\varepsilon,k)}a^{\varepsilon}_{n,k}(x) \frac{d^k}{du^k}  f^{\varepsilon}\left(\sqrt{\frac{2n}{\lambda}}\right).
\] 
\end{theorem}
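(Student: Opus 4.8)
The plan is to run the generating–function argument of Radchenko--Viazovska~\cite{inter} and Cohn--Kumar--Miller--Radchenko--Viazovska~\cite{Maryna3}, transplanted to the Hecke triangle group $\Gamma$. By linearity it suffices to prove the formula for a single Gaussian $f(x)=e^{i\pi\tau|x|^2}$ with $\Im(\tau),\Im(-1/\tau)>\sin(\pi/l)$ and then integrate the resulting identity against $d\mu(\tau)$; the interchange of $\sum_{n}$ and $\int d\mu$ is precisely where the compactness of $\operatorname{supp}\mu$ inside the open region is used, together with the size bounds for $a^{\varepsilon}_{n,k}$ from Section~\ref{basisk}. For the Gaussian one has $\mathcal{F}(f)(x)=\tfrac{i}{\tau}e^{-i\pi|x|^2/\tau}$, hence with $u=|x|^2$
\[
\tfrac{d^k}{du^k}f^{\varepsilon}\!\left(\sqrt{\tfrac{2n}{\lambda}}\right)=(i\pi\tau)^k e^{2\pi i n\tau/\lambda}+\varepsilon\tfrac{i}{\tau}\Big(\tfrac{-i\pi}{\tau}\Big)^{k} e^{-2\pi i n/(\lambda\tau)},
\]
so, substituting the integral representation of $a^{\varepsilon}_{n,k}$ and summing over $n$, the right-hand side of the asserted formula equals
\[
\frac{1}{\lambda}\int_{w_1}^{w_2}\Big[(i\pi\tau)^k K^{\varepsilon}_k(\tau,z)+\varepsilon\tfrac{i}{\tau}\Big(\tfrac{-i\pi}{\tau}\Big)^{k} K^{\varepsilon}_k(-1/\tau,z)\Big]\frac{e^{\pi i z u}}{(i\pi z)^k}\,dz,\qquad K^{\varepsilon}_k(\tau,z):=\sum_{n\ge d(\varepsilon,k)}\phi^{\varepsilon}_{n,k}(z)\,e^{2\pi i n\tau/\lambda}.
\]

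The heart of the matter is the analysis of the two–variable kernel $K^{\varepsilon}_k$. First I would show that the defining series converges for $\Im(\tau)$ large and that $K^{\varepsilon}_k(\tau,z)$ continues meromorphically in $\tau$, its only singularity being a simple pole along the $\Gamma$–orbit of the diagonal $z\equiv\tau\ (\mathrm{mod}\ \mathbf{Z}\lambda)$; the residue there is forced by the normalization $\phi^{\varepsilon}_{n,k}=q^{-n}+O(1)$ with $q=e^{2\pi i z/\lambda}$, since $\sum_{n\ge d}q_z^{-n}q_{\tau}^{\,n}=q_{\tau}^{\,d}q_z^{-d}/(1-q_{\tau}q_z^{-1})$ contributes exactly $\tfrac{\lambda}{2\pi i}$ times the evaluation map at $z=\tau$. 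As a function of $z$, $K^{\varepsilon}_k(\tau,\cdot)$ is a weakly holomorphic modular form of weight $-2k+1$ and multiplier $\varepsilon$ for $\Gamma$, so in particular $K^{\varepsilon}_k(\tau,z+\lambda)=K^{\varepsilon}_k(\tau,z)$ and $K^{\varepsilon}_k(\tau,-1/z)=\varepsilon(z/i)^{-2k+1}K^{\varepsilon}_k(\tau,z)$, and its $q$–expansion in $z$ has no terms below $q^{d(\varepsilon,k)}$ apart from the principal part. As a function of $\tau$, $K^{\varepsilon}_k$ is $T$–periodic; the crucial extra input is a functional equation under $\tau\mapsto-1/\tau$ whose discrepancy is a finite sum indexed by a basis of $M^{-\varepsilon}_{2k+1}(\Gamma)$ — this is the point where the group–algebra identity $S(T^{-1}-I)(1+V+\dots+V^{l-1})=-(T^{-1}-I)(1+V+\dots+V^{l-1})$ and a Bol/Serre–type duality between weakly holomorphic forms of weight $-2k+1$ and holomorphic forms of weight $2k+1$ enter, and it is what makes $d(\varepsilon,k)=\dim M^{-\varepsilon}_{2k+1}(\Gamma)$ the correct starting index.

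Granting these properties, I would finish by a contour deformation. For $\Im(\tau)$ large, where all the series converge, one moves the arc $[w_1,w_2]$ within the upper half-plane across the simple pole of $K^{\varepsilon}_k(\tau,\cdot)$ at $z=\tau$; using the $T$–periodicity and the weight $-2k+1$ modularity of $K^{\varepsilon}_k(\tau,\cdot)$ one shows that the contour integral over the arc equals $2\pi i\,\mathrm{Res}_{z=\tau}$ plus a remainder, and that $2\pi i\,\mathrm{Res}_{z=\tau}$ — once the factors $\tfrac1\lambda$, $(i\pi\tau)^k$, $2\pi i$ and $\tfrac{\lambda}{2\pi i}$ are collected — contributes exactly $e^{i\pi\tau u}$ to the first term. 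The second term is treated symmetrically via the substitution $z\mapsto-1/z$ and the condition $\Im(-1/\tau)>\sin(\pi/l)$, contributing $\varepsilon\tfrac{i}{\tau}e^{-i\pi u/\tau}$; the two leftover remainders cancel against each other by the $\tau\mapsto-1/\tau$ functional equation of $K^{\varepsilon}_k$ from the previous paragraph. Thus the right-hand side equals $e^{i\pi\tau u}+\varepsilon\tfrac{i}{\tau}e^{-i\pi u/\tau}=f^{\varepsilon}(x)$ for $\Im(\tau)$ large, and the identity then propagates to the whole region $\{\Im(\tau),\Im(-1/\tau)>\sin(\pi/l)\}$ by analytic continuation in $\tau$: the value $\sin(\pi/l)=\Im(w_1)=\Im(w_2)$ is exactly the threshold beyond which a pole of $K^{\varepsilon}_k$ reaches an endpoint of the arc and a genuinely new term appears, matching the obstructions $r^{\varepsilon}(\gamma,\tau;x)$ of Corollary~\ref{mainthm22}.

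The main obstacle is the second paragraph: establishing the meromorphic continuation, the exact pole structure, and above all the $\tau\mapsto-1/\tau$ functional equation of $K^{\varepsilon}_k$. For $l=\infty$ Radchenko and Viazovska could write the analogous kernel explicitly in terms of $\theta$–series and the modular $\lambda$–function; for a general Hecke triangle group there is no such closed formula, so the argument must instead go through the structure of the rings of (weakly) holomorphic modular forms of $\Gamma$ and the weight $2k+1$ duality — which is also exactly the content of the identification, announced in the abstract, of the summation relations among the values with $M_{2k+1}^{-\varepsilon}(\Gamma)$. A secondary technical point is the convergence of the interpolation series up to the boundary of the region in the reduction step, which is not absolute there and must be extracted from the bounds on $a^{\varepsilon}_{n,k}$ and the compactness of $\operatorname{supp}\mu$.
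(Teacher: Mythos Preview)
Your plan is the paper's plan: package the right-hand side as a contour integral of the two–variable kernel $K_k^{\varepsilon}$ against the Gaussian, deform the arc $[w_1,w_2]$ across the simple pole on the $\Gamma$–orbit of the diagonal, and read off the Gaussians $e^{i\pi\tau u}$ and $\varepsilon\tfrac{i}{\tau}e^{-i\pi u/\tau}$ as residues. The paper organizes this via the generating function $F_k^{\varepsilon}(\tau,x)=\sum_n a_{n,k}^{\varepsilon}(x)e^{2\pi i n\tau/\lambda}$ and the ``modular integral'' equation $F_k^{\varepsilon}|_{2k+1}(I+\varepsilon S)=\frac{e^{\pi i\tau|x|^2}}{(i\pi\tau)^k}|_{2k+1}(I+\varepsilon S)$, but after unwinding it is exactly your residue computation.

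Where you go off course is your ``main obstacle''. You assert that for a general Hecke triangle group there is no closed formula for $K_k^{\varepsilon}$, that its $\tau\mapsto-1/\tau$ functional equation has a discrepancy indexed by a basis of $M_{2k+1}^{-\varepsilon}(\Gamma)$, and that the group-algebra identity $S(T^{-1}-I)(1+\cdots+V^{l-1})=-(T^{-1}-I)(1+\cdots+V^{l-1})$ is what closes the gap. All three claims are wrong. There \emph{is} a closed formula (Proposition~\ref{genf}):
\[
K_k^{\varepsilon}(z,\tau)=\frac{\phi_{d,k}^{\varepsilon}(z)\,f_{d-1,k}^{-\varepsilon}(\tau)}{J(\tau)-J(z)},
\]
where $J$ is the Hauptmodul of $\Gamma$; this is the Zagier/Duke--Jenkins generating-series identity and it works for any genus-zero group with a cusp. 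Since $f_{d-1,k}^{-\varepsilon}\in M_{2k+1}^{-\varepsilon}(\Gamma)$ and $J$ is $\Gamma$–invariant, $K_k^{\varepsilon}(z,\tau)$ is \emph{exactly} modular of weight $2k+1$ and multiplier $-\varepsilon$ in $\tau$: there is no discrepancy, and your ``two leftover remainders'' are identically the same integral up to the exact slash factor, so they cancel on the nose. The role of $d(\varepsilon,k)=\dim M_{2k+1}^{-\varepsilon}(\Gamma)$ is not to index defect terms but simply that $f_{d-1,k}^{-\varepsilon}$ vanishes to order $d-1$ at $\infty$, which starts the $q$-expansion of $K_k^{\varepsilon}$ at $q^d$. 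The group-algebra identity plays no part in Theorem~\ref{mainthmder}; it belongs to Theorem~\ref{mainthm2}, where it explains the monodromy of $F_k^{\varepsilon}$ around $w_2$ and produces the obstruction functions $r_k^{\varepsilon}$. Once you use the closed formula, the growth estimate and the contour deformation you outline are straightforward, and the convergence on compacta of $\{\Im(\tau),\Im(-1/\tau)>\sin(\pi/l)\}$ follows from the improved bound $|a_{n,k}^{\varepsilon}(x)|\ll_\delta e^{2\pi\sin(\pi/l)(1+\delta)n/\lambda}e^{-\pi\sin(\pi/l)|x|^2}$ obtained by integrating the closed formula along $\Im\tau=\sin(\pi/l)+\delta$.
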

We prove Theorem~\ref{mainthmder} in section~\ref{proofm}.
\\

Next, we show that the condition  $\Im(\tau),\Im(-\frac{1}{\tau_i})>\sin\left(\frac{\pi}{l}\right)$ is optimal. 
We define
\[
r^{\varepsilon}_k(\gamma,\tau;x):=\frac{e^{i\pi \tau |x|^2}}{(i\pi\tau)^k}|^{-\varepsilon}_{2k+1} (T^{-1}-I)(1+V+\dots+V^{l-1})\gamma,
\]
where $\gamma\in \Gamma,$  $\tau$ is in the upper half-plane and $x\in \R^2$. 

\begin{theorem}\label{mainthm2} $r^{\varepsilon}_k(\gamma,\tau;x)$ is an eigenfunction of the Fourier transformation with respect to $x$ with eigenvalue $\varepsilon$ for any $\gamma\in \Gamma.$ Moreover, 
\[
\frac{d^k}{du^k}r_k^{\varepsilon}\left(\gamma,\tau;\sqrt{\frac{2n}{\lambda}}\right)=0.
\]
for every  integer $n\geq0.$
\end{theorem}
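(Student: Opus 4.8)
The plan is to prove Theorem~\ref{mainthm2} by reducing both claims to the corresponding statements for a single Gaussian and then exploiting the group-algebra identity displayed just before the statement. First I would record the two facts that drive the argument: the Gaussian $g_\tau(x) := e^{i\pi\tau|x|^2}$ has Fourier transform $\mathcal{F}(g_\tau) = \frac{i}{\tau} g_{-1/\tau}$, which is precisely the statement that $g_\tau |^{-\varepsilon}_{2k+1} S$ relates $\mathcal{F}$ to the slash action (up to the normalizing factor $(i\pi\tau)^{-k}$ we have inserted), and that the slash action $|^{-\varepsilon}_{2k+1}$ is a right action of the group algebra $\Z[\Gamma]$ on the space of functions $\tau \mapsto (\text{function of }x)$ built from Gaussians. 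Concretely, I would verify that $\mathcal{F}$ acting on the $x$-variable intertwines with the operator $|^{-\varepsilon}_{2k+1} S$ acting on $\tau$, so that an element of the group algebra killed (up to sign) by left multiplication by $S$ produces Fourier eigenfunctions. This is exactly the content of the identity
\[
S(T^{-1}-I)(1+V+\dots+V^{l-1}) = -(T^{-1}-I)(1+V+\dots+V^{l-1})
\]
in $\Z[PSL_2(\R)]$: writing $P := (T^{-1}-I)(1+V+\dots+V^{l-1})$, the function $r^\varepsilon_k(\gamma,\tau;x) = \frac{g_\tau(x)}{(i\pi\tau)^k}|^{-\varepsilon}_{2k+1} P\gamma$ satisfies $\mathcal{F}_x r^\varepsilon_k(\gamma,\tau;x) = \varepsilon\, r^\varepsilon_k(\gamma,\tau;x)$ because applying $\mathcal{F}_x$ amounts to left-multiplying $P\gamma$ by $S$ and picking up the sign $\varepsilon$ from the multiplier and the $\varepsilon=\pm1$ bookkeeping; the $\gamma$ on the right is untouched by this manipulation, so the eigenfunction property holds for every $\gamma\in\Gamma$.

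For the vanishing of the derivatives at the interpolation nodes, the key observation is that the operator $T^{-1}-I$ acts on a Gaussian by $g_\tau|^{-\varepsilon}_{2k+1}(T^{-1}-I) = g_{\tau-\lambda} - g_\tau$ (the weight/multiplier factors are trivial for $T$), so every summand of $r^\varepsilon_k(\gamma,\tau;x)$ is a finite $\CC$-linear combination of terms $\frac{e^{i\pi\sigma|x|^2}}{(i\pi\sigma)^k}$ with the crucial feature that the coefficients, after collecting, exhibit the difference structure coming from $T^{-1}-I$. Evaluating $\frac{d^k}{du^k}$ in $u=|x|^2$ on $\frac{e^{i\pi\sigma u}}{(i\pi\sigma)^k}$ gives exactly $e^{i\pi\sigma u}$, independent of the $(i\pi\sigma)^{-k}$ normalization — this is the whole point of inserting that factor. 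So $\frac{d^k}{du^k} r^\varepsilon_k(\gamma,\tau;x)\big|_{u=2n/\lambda}$ becomes a finite sum of terms $e^{i\pi\sigma \cdot 2n/\lambda}$ weighted by the group-algebra coefficients, and I must show this vanishes for every integer $n\ge 0$. Here the factor $T^{-1}-I$ is decisive: since $\lambda = 2\cos(\pi/l)$ and $u = 2n/\lambda$, we have $e^{i\pi(\sigma-\lambda)\cdot 2n/\lambda} = e^{-2\pi i n} e^{i\pi\sigma\cdot 2n/\lambda} = e^{i\pi\sigma\cdot 2n/\lambda}$, so the two Gaussians in each difference $g_{\sigma-\lambda}-g_\sigma$ contribute with opposite signs but equal values at $u=2n/\lambda$, and everything cancels in pairs.

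The step I expect to be the main obstacle is making the second paragraph's bookkeeping fully rigorous: I need to track how the projector $1+V+\dots+V^{l-1}$ and the trailing $\gamma$ interact with the slash action so that the expression $g_\tau|^{-\varepsilon}_{2k+1}P\gamma$ really is a finite linear combination of normalized Gaussians $\frac{e^{i\pi\sigma|x|^2}}{(i\pi\sigma)^k}$ for parameters $\sigma$ in the upper half-plane (so that the Schwartz/analytic manipulations are legitimate), and — more delicately — to confirm that the factorization $P = (T^{-1}-I)(\cdots)$ survives the whole computation in the sense that the final linear combination genuinely has the telescoping/difference form needed for the $e^{-2\pi i n}=1$ cancellation. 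In particular one must be careful that $V = TS$ has infinite-order image in $PSL_2(\R)$ unless one works modulo the relation $V^l = I$, which holds in $\Gamma$ but whose use in the group algebra requires the identity displayed before the statement; so I would first establish that displayed identity by a direct $2\times 2$ matrix computation (expanding $S\cdot T^{-1}\cdot(1+V+\dots+V^{l-1})$ and $S\cdot(1+V+\dots+V^{l-1})$ and using $SV = V^{-1}S = V^{l-1}S$ together with $V^l=I$), and only then propagate it through the slash action. Once the group-algebra identity and the "difference form'' of $r^\varepsilon_k$ are in hand, both assertions of the theorem follow as outlined, and I would remark that the $k=0$ specialization recovers Corollary~\ref{mainthm22}.
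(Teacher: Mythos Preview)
Your proposal is correct and follows essentially the same route as the paper: the eigenfunction claim is reduced to the group-algebra identity $SP=-P$ via the intertwining $\mathcal{F}_x \leftrightarrow |^{-\varepsilon}_{2k+1}S$ (the paper's Lemma~\ref{fourr}), and the vanishing at the nodes comes from commuting $d^k/du^k$ through the slash (Lemma~\ref{difslsh}) and then using that $e^{i\pi\tau u}|(T^{-1}-I)$ vanishes identically in $\tau$ at $u=2n/\lambda$. Your worries in the last paragraph are unfounded: the slash is a genuine right action, so the factorization $P=(T^{-1}-I)(1+V+\dots+V^{l-1})$ cleanly factors the computation as $(f|(T^{-1}-I))|(1+V+\dots+V^{l-1})\gamma$ with no bookkeeping hazards, and $V=TS$ really does satisfy $V^l=I$ in $\Gamma\subset PSL_2(\R)$ (so in the group algebra $V^{\pm1}\sum V^i=\sum V^i$, which yields the displayed identity in one line without matrix expansion).
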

We prove Theorem~\ref{mainthm2} in section~\ref{monodromy}
\\

Theorem~\ref{mainthmder} generalizes Corollary~\ref{mainthm} to higher derivatives and it implies Corollary~\ref{mainthm}  when $k=0.$ We also note that $d(\varepsilon,k)>0$  is growing linearly for $k\geq 1.$ This means that there are relations among $\frac{d^k}{du^k}  f^{\varepsilon}\left(\sqrt{\frac{2n}{\lambda}}\right).$ We describe the space of relations completely in section~\ref{obs}.

\subsection{Obstructions for Fourier interpolation}\label{obs} Our theorem in this section holds for every radial Schwartz function $f(x)$ on $\R^2$. We introduce a complete family of linear obstructions to the Fourier interpolation with $k$-th derivatives $\frac{d^k}{du^k}  f^{\varepsilon}\left(\sqrt{\frac{2n}{\lambda}}\right).$  We show that the obstructions are associated to the modular forms of weight $2k+1$ and multiplier $-\varepsilon.$ 
\\

\begin{theorem}\label{obsthm}  Suppose that $g(z)=\sum_{n\geq 0} b_n q^n \in M_{2k+1}^{-\varepsilon}(\Gamma),$ where $q=e^{\frac{2\pi i z}{\lambda}}$ and $f(x)$ is a radial Schwartz function. We have 
\[
\sum_{n\geq 0}b_n \frac{d^k}{du^k} f^{\varepsilon}\left(\sqrt{\frac{2n}{\lambda}}\right)=0.
\]
\end{theorem}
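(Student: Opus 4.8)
The plan is to reduce the statement to a contour-integral identity via the interpolation basis, and then to exploit the modularity of $g$ to collapse the contour. First I would observe that it suffices to prove the claim for $f=f^{\varepsilon}$ an $\varepsilon$-eigenfunction of the Fourier transform, since a general radial Schwartz $f$ decomposes as $f=\tfrac12(f^++f^-)$ and the two summands contribute to the two values of $\varepsilon$ separately; moreover, by density it is enough to treat $f(x)=\int e^{i\pi\tau|x|^2}\,d\mu(\tau)$ with $\mu$ supported where $\Im(\tau),\Im(-1/\tau)>\sin(\pi/l)$, because such functions are dense among radial Schwartz functions and the linear functional $f\mapsto \frac{d^k}{du^k}f^{\varepsilon}(\sqrt{2n/\lambda})$ is continuous in the Schwartz topology. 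For such $f$, linearity in $\mu$ reduces us to a single Gaussian $f(x)=e^{i\pi\tau|x|^2}$.

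For a single Gaussian, I would compute $\frac{d^k}{du^k}f^{\varepsilon}(\sqrt{2n/\lambda})$ explicitly: writing $u=|x|^2$, one has $\frac{d^k}{du^k}e^{i\pi\tau u}=(i\pi\tau)^k e^{i\pi\tau u}$, and the analogous factor $(i\pi(-1/\tau))^k$ for the Fourier-transformed Gaussian $\frac{i}{\tau}e^{i\pi(-1/\tau)u}$. Thus the series $\sum_n b_n\,\frac{d^k}{du^k}f^{\varepsilon}(\sqrt{2n/\lambda})$ becomes, up to the weight-$k$ normalisation $(i\pi z)^{-k}$ built into the definition of $a^\varepsilon_{n,k}$, a value of the $q$-series of $g$ evaluated in the relevant variable. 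Concretely, I expect
\[
\sum_{n\ge 0} b_n\,\frac{d^k}{du^k}f^{\varepsilon}\!\left(\sqrt{\tfrac{2n}{\lambda}}\right)
=(i\pi\tau)^k e^{i\pi\tau\cdot 0}\cdot\big(\text{value of }g\text{ at }\tau\big)+\varepsilon\,(\cdots\text{at }-1/\tau\cdots),
\]
so that the sum is essentially $g(\tau)\big|^{-\varepsilon}_{2k+1}(1+\varepsilon S)$ evaluated appropriately — i.e. it measures the failure of $g$ to be anti-invariant under $S$ with the opposite multiplier. Since $g\in M^{-\varepsilon}_{2k+1}(\Gamma)$ is by definition invariant under $|^{-\varepsilon}_{2k+1}S$ (as $S\in\Gamma$), this combination vanishes. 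The $T$-invariance of $g$ is what guarantees the $q$-expansion is indexed by integers $n\ge0$ in the first place, matching the node set $\{2n/\lambda\}$.

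The cleanest route, and the one I would actually carry out, is the contour-integral argument paralleling Radchenko–Viazovska: pair $g$ against the generating kernel $\sum_n a^\varepsilon_{n,k}(x)q^n$ used to build the basis, so that $\sum_n b_n\,\frac{d^k}{du^k}f^\varepsilon(\sqrt{2n/\lambda})$ is realised as $\frac1\lambda\int_{w_1}^{w_2}g(z)\frac{e^{i\pi z u}}{(i\pi z)^k}\,dz$ (for appropriate $u$), using the orthogonality relation \eqref{baseff}; then deform the contour from the arc $w_1\to w_2$ using the relations $g|^{-\varepsilon}_{2k+1}T=g$ and $g|^{-\varepsilon}_{2k+1}S=g$ together with $V^l=I$, and use that $g$ is holomorphic at $\infty$ (so no residue at the cusp) to conclude the integral is zero. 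The key group-algebra identity $S(T^{-1}-I)(1+V+\dots+V^{l-1})=-(T^{-1}-I)(1+V+\dots+V^{l-1})$ quoted in the introduction is exactly what makes the boundary terms cancel. The main obstacle I anticipate is the bookkeeping at the cusp: one must check that the holomorphy of $g$ at $\infty$ (no principal part in $q$) precisely kills the potentially divergent contribution near the vertical edges $w_1,w_2$ pushed to $i\infty$, and that the weight-$k$ twist $(i\pi z)^{-k}$ together with the slash of weight $2k+1$ interacts correctly so that $e^{i\pi z u}/(i\pi z)^k$ transforms with weight $1$ under the $|^\varepsilon$-action — i.e. that the pairing of a weight-$(2k+1)$ form against this kernel lands in the weight-$1$ setting where the $l=\infty$ argument of Radchenko–Viazovska applies mutatis mutandis. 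Once that compatibility is nailed down, the vanishing is forced by holomorphy plus the two functional equations.
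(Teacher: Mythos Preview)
Your middle paragraph is essentially the paper's own proof, and it is complete as it stands: reduce by density to a single Gaussian, compute the $k$-th $u$-derivatives of $e^{i\pi\tau u}$ and of its Fourier transform, and observe that
\[
\sum_{n\ge 0}b_n\,\frac{d^k}{du^k}f^{\varepsilon}\!\left(\sqrt{\tfrac{2n}{\lambda}}\right)
=(i\pi\tau)^k\Big(g(\tau)+\varepsilon\left(\tfrac{i}{\tau}\right)^{2k+1}g\!\left(-\tfrac{1}{\tau}\right)\Big)=0,
\]
the last step being exactly the functional equation defining $g\in M_{2k+1}^{-\varepsilon}(\Gamma)$. (Minor bookkeeping: your expression ``$g|^{-\varepsilon}_{2k+1}(1+\varepsilon S)$'' is not quite right---with $g$ invariant under $|^{-\varepsilon}_{2k+1}S$ that combination equals $(1+\varepsilon)g$, not $0$. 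The correct object is $g+\varepsilon(i/\tau)^{2k+1}g(-1/\tau)$, i.e.\ $g|^{+}_{2k+1}(I+\varepsilon S)$, which vanishes because the $-\varepsilon$ multiplier means $g(\tau)=-\varepsilon(i/\tau)^{2k+1}g(-1/\tau)$.) Also, no restriction on $\tau$ is needed here: the modular relation for $g$ holds on all of $\mathbb{H}$, so Gaussians with arbitrary $\tau\in\mathbb{H}$ suffice for density.

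Your third paragraph is a detour and should be dropped. The contour/kernel route you sketch is not how the paper argues, and as written it does not work: the orthogonality relation \eqref{baseff} does not realise $\sum_n b_n\,\frac{d^k}{du^k}f^{\varepsilon}(\sqrt{2n/\lambda})$ as the arc integral $\frac1\lambda\int_{w_1}^{w_2}g(z)\frac{e^{i\pi z u}}{(i\pi z)^k}\,dz$; that relation is a statement about $a_{n,k}^{\varepsilon}$ evaluated at nodes, not a pairing formula for general $f$. More importantly, the group-algebra identity $S(T^{-1}-I)(1+V+\dots+V^{l-1})=-(T^{-1}-I)(1+V+\dots+V^{l-1})$ is irrelevant to this theorem---it governs the monodromy functions $r_k^{\varepsilon}$ of Theorem~\ref{mainthm2}, not the obstruction pairing. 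The entire content of Theorem~\ref{obsthm} is the $S$-functional equation of $g$ plus density of Gaussians; no contour deformation, cusp analysis, or relation $V^l=I$ is needed.
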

\begin{proof}
Since $g(z)$ is a holomorphic modular forms. By Hardy's upper bound, its coefficients satisfy the  polynomial bound $|b_n|\ll n^{2k+\varepsilon}.$ Since $f$ is a Schwartz function then the following series is absolutely convergent
\[
\sum_{n\geq 0}b_n \frac{d^k}{du^k} f^{\varepsilon}\left(\sqrt{\frac{2n}{\lambda}}\right).\]
We want to show that the above tempered distribution is zero on the space of radial Schwartz function. It is enough to prove it for a dense subset of radial Schwartz functions. Following \cite[section 6]{inter}, it is enough to prove it for the normalized complex Gaussian $G_k(z,x):=\frac{e^{i\pi z|x|^2}}{(iz\pi)^k}$ where $z\in \HH.$ Note that

\begin{equation}\label{mfe}g(z)=-\varepsilon\left(\frac{i}{z}\right)^{2k+1}g\left(\frac{-1}{z}\right).\end{equation}

It is easy to check that
 \[\frac{d^k}{du^k}G_k(z,x)=G_0(z,x),\] 
and,
\[
\frac{d^k}{du^k} \mathcal{F}G_k\left(z,x\right)=\left(\frac{i}{z}\right)^{2k+1}G_0\left(\frac{-1}{z},x\right),
\]
where $u=|x|^2$.
 We rewrite~\eqref{mfe} as
\[
\sum_{n\geq 0} b_n \frac{d^k}{du^k} \left( G_k\left(z,\sqrt{\frac{2n}{\lambda}} \right)+\varepsilon \mathcal{F}G_k\left(z,\sqrt{\frac{2n}{\lambda}}\right)  \right)=0.
\]
This implies that 
\[
\sum_{n\geq 0}b_n \frac{d^k}{du^k} f^{\varepsilon}\left(\sqrt{\frac{2n}{\lambda}}\right)=0
\]
 for  \(f(x)=G_k(z,x)^{\varepsilon}\).  This completes the proof of our theorem.
\end{proof}
\begin{remark}
We note Theorem~\ref{obsthm} imposes $d(\varepsilon,k)=\dim \left(M_{2k+1}^{-\varepsilon}(\Gamma)\right)$ independent linear equations on the values of $ \frac{d^k}{du^k} f^{\varepsilon}\left(\sqrt{\frac{2n}{\lambda}}\right).$ On the other hand, by using the basis functions $a_{n,k}^{\varepsilon}$ defined in \eqref{baseff}, it follows that  these are the only linear obstructions on the values of $ \frac{d^k}{du^k} f^{\varepsilon}\left(\sqrt{\frac{2n}{\lambda}}\right)$. 
\end{remark}

\subsection{Universal optimality of the hexagonal lattice}\label{opin}
In this section, we state our theorem which implies a conjecture of  Cohn,  Kumar,  Miller,  Radchenko and   Viazovska~\cite[Conjecture 7.5]{Maryna3} motivated by the universal optimality of the hexagonal lattice. We begin by stating a conjecture of Cohn and Elkies. This conjecture is based on a version of the linear programing method developed by Cohn and Elkies~\cite{Elkies} for giving upper bounds on the density of the sphere packings in Euclidean spaces. 
\\

Cohn and Elkies conjectured~\cite{Elkies} that there exists a radial Schwartz function $f:\R^2\to \R$ that satisfies 
\begin{enumerate}
\item $f(r)\leq 0$ for $r^2\geq \frac{2}{\sqrt{3}}$,
\item $\mathcal{F}(f)(r)\geq 0$ for all $r$,
\item $f(0)=\mathcal{F}(f)(0),$
\end{enumerate}
where $r=|x|.$
It follows from the Poisson summation formula for the hexagonal lattice that 
\[
f\left(\sqrt{\frac{2n}{\sqrt{3}}}\right)=\mathcal{F}(f)\left(\sqrt{\frac{2n}{\sqrt{3}}}\right)=\frac{d}{dr}\mathcal{F}(f)\left(\sqrt{\frac{2n}{\sqrt{3}}}\right)=0, n\geq 1,
\]
and
\[
\frac{d}{dr}f\left(\sqrt{\frac{2n}{\sqrt{3}}}\right)=0,  n>1,
\]
where $n=x^2+xy+y^2$ for some $x,y\in \mathbb{Z}.$
Constructing a generalized version of this function is equivalent to proving the universality of the hexagonal lattice in the plane~\cite{Maryna3}, which is an outstanding open problem.  Namely, constructing $f_t$ such that 
\begin{enumerate}
\item $f_t(r)\leq e^{-\pi t r^2}$,
\item $\mathcal{F}f_t(r)\geq 0$,
\item $\mathcal{F}f_t(0)-f_t(0)=\theta_0(it)-1.$
\end{enumerate}

Very recetly~Viazovska and her collaborators \cite{Maryna1,Maryna2, Maryna3} in their spectacular works resolved the Cohn and Elkies conjecture in dimensions 8 and 24 and also proved the universality of $E_8$ and the Leech lattice. As discussed by the authors~\cite[page 91]{Maryna3} their method does not generalize directly to dimension $2$ and one needs a new idea for dimension 2 to construct $f$ satisfying the above conditions. We discuss some properties of $f_t.$ It follows from the Poisson summation formula for the hexagonal lattice that 
\[
\begin{split}
f_t\left(\sqrt{\frac{2n}{\sqrt{3}}}\right)=e^{-\frac{2\pi t n}{\sqrt{3}}}, n\geq 1,
\\
\mathcal{F}(f)\left(\sqrt{\frac{2n}{\sqrt{3}}}\right)=\frac{d}{dr}\mathcal{F}(f)\left(\sqrt{\frac{2n}{\sqrt{3}}}\right)=0, n\geq 1,
\end{split}
\]
and
\[
\frac{d}{dr}f\left(\sqrt{\frac{2n}{\sqrt{3}}}\right)=-2\sqrt{\frac{2n}{\sqrt{3}}}\pi t e^{-\frac{2\pi t n}{\sqrt{3}}},  n>1,
\]
where $n=x^2+xy+y^2$ for some $x,y\in \mathbb{Z}.$
The authors in~\cite{Maryna3} showed that in dimensions 8 and 24 the analogues   equations uniquely determine the radial Schwartz function $f$. However, they conjectured~\cite{Maryna3} that in dimension $2,$ the radial Schwartz function $f$ is not uniquely determined by the values of  $f,f',\mathcal{F}(f)$ and  $\mathcal{F}(f)'$ at $\sqrt{\frac{2n}{\sqrt{3}}}$, where $n$ is represented by $x^2+xy+y^2.$ One heuristic reason is that the number of the  integers less than $X$ represented by $x^2+xy+y^2$ is  $X/\sqrt{\log(X)}$ asymptotically. So we have less equations than in dimensions $8$ and $24$, where we have equations associated to each integers $n\geq 0$.  
\\

We prove the conjecture of  Cohn,  Kumar,  Miller,  Radchenko and  Viazovska~\cite[Conjecture 7.5]{Maryna3}.
\begin{theorem}\label{mainconj}
There are infinitely many linearly independent radial Schwartz function $f$ on the plane  such that  $f$ and $\mathcal{F}(f)$ vanishes of order $2$ at   $|x|=\sqrt{\frac{2n}{\sqrt{3}}}$ where $n=x^2+xy+y^2$ for some $x,y \in\Z.$ 

\end{theorem}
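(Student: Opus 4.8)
The plan is to exploit the $l=6$ case of the interpolation machinery built in the earlier sections, where $\lambda = 2\cos(\pi/6)=\sqrt{3}$, so that the interpolation nodes $\sqrt{2n/\lambda}$ become exactly $\sqrt{2n/\sqrt{3}}$. The key point is that Theorem~\ref{mainthm2} (specialized to $k=1$, $l=6$, $d=2$) produces, for every $\gamma \in \Gamma$ and every $\tau$ in the upper half-plane, an explicit radial Schwartz eigenfunction $r^{\varepsilon}_1(\gamma,\tau;x)$ of the Fourier transform with eigenvalue $\varepsilon$, all of whose first two ``$u$-Taylor coefficients'' vanish at the nodes: concretely $r^{\varepsilon}_1(\gamma,\tau;\sqrt{2n/\lambda})=0$ and $\frac{d}{du}r^{\varepsilon}_1(\gamma,\tau;\sqrt{2n/\lambda})=0$ for all integers $n\geq 0$. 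Wait — I must be careful: Theorem~\ref{mainthm2} as stated gives vanishing of $\frac{d^k}{du^k}r^{\varepsilon}_k$, i.e. of the single top derivative of order $k$. So for the conjecture I actually want vanishing \emph{to order two}, meaning vanishing of both the value and the first derivative. The right move is therefore to combine the $k=0$ statement (Corollary~\ref{mainthm22}/Theorem~\ref{mainthm2} with $k=0$, giving functions with $r^{\varepsilon}(\gamma,\tau;\sqrt{2n/\lambda})=0$) and the $k=1$ statement, or — cleaner — to observe that a single function vanishing to order $2$ at a point in the radial variable $r$ is the same as a function $g(u)$ with $g$ and $g'$ vanishing at $u = r^2$, and then to build such functions directly.

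First I would reduce to constructing one-variable functions: writing $u=|x|^2$ and regarding a radial Schwartz function as a Schwartz function of $u$ on $\R^+$, ``$f$ and $\mathcal F(f)$ vanish to order $2$ at $|x|=\sqrt{2n/\sqrt 3}$'' is equivalent to: $f^{\varepsilon}(\sqrt{2n/\sqrt 3}) = 0$ and $\frac{d}{du}f^{\varepsilon}(\sqrt{2n/\sqrt 3})=0$ for both $\varepsilon=\pm1$ and all relevant $n$, since $f = (f^+ + f^-)/2$ and $\mathcal F(f) = (f^+ - f^-)/2$. So it suffices to produce, for each $\varepsilon$, infinitely many linearly independent radial Schwartz eigenfunctions $h^{\varepsilon}$ with eigenvalue $\varepsilon$ such that $h^{\varepsilon}$ and $\frac{d}{du}h^{\varepsilon}$ both vanish at every node $u = 2n/\sqrt 3$. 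Then I would invoke Theorem~\ref{mainthm2}: for $k=1$ it gives eigenfunctions whose second-order data vanishes, and for $k=0$ it gives eigenfunctions whose zeroth-order data vanishes. The functions I actually want sit in the intersection, so I would instead look at $r^{\varepsilon}_1(\gamma,\tau;x)$ more carefully — by construction it is an integral of Gaussians against a weakly holomorphic form of weight $-1$, and the slash-operator identity $S(T^{-1}-I)(1+V+\dots+V^{l-1}) = -(T^{-1}-I)(1+V+\dots+V^{l-1})$ forces \emph{all} the interpolation data at the nodes, not just the top derivative, to be constrained; in the $k=1$, $l=6$ setting the extra modular forms of weight $3$ and multiplier $-\varepsilon$ (which have positive dimension $d(\varepsilon,1)>0$) are exactly what make these functions genuinely nonzero while killing the node data through order two.

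The cleanest self-contained route, which I would pursue in the write-up, is: (i) specialize $l=6$ so nodes are $\sqrt{2n/\sqrt3}$; (ii) take the family $\{r^{\varepsilon}_1(\gamma,\tau;x)\}$ from Theorem~\ref{mainthm2}, and separately note that differentiating under the integral sign, the function $a^{\varepsilon}_{n,1}(x)$ of the previous subsection is a radial Schwartz $\varepsilon$-eigenfunction with $a^{\varepsilon}_{n,1}$ vanishing at all nodes and $\frac{d}{du}a^{\varepsilon}_{n,1}(\sqrt{2m/\lambda}) = \delta(m,n)$ — so these are \emph{not} what we want either, but their behavior pins down the obstruction space; (iii) instead use the obstruction Theorem~\ref{obsthm} in reverse: since $d(\varepsilon,1) = \dim M_3^{-\varepsilon}(\Gamma) > 0$ for $\Gamma = (2,6,\infty)$, there is at least one holomorphic modular form $g$ of weight $3$, and the span of the node-evaluation functionals $\{f \mapsto \frac{d}{du}f^{\varepsilon}(\sqrt{2n/\lambda})\}_{n}$ has infinite-codimensional kernel on radial Schwartz space once we also impose vanishing of $f^{\varepsilon}$ at the nodes — concretely, I would show the map from radial Schwartz functions to $\prod_{n\geq 0}(f^{\varepsilon}(\sqrt{2n/\lambda}), \frac{d}{du}f^{\varepsilon}(\sqrt{2n/\lambda}))$ is far from injective by exhibiting, via the $r^{\varepsilon}_1$ and $r^{\varepsilon}$ families evaluated near $w_2$, an infinite-dimensional space in its kernel. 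The main obstacle I anticipate is precisely this last linear-independence count: Theorem~\ref{mainthm2} hands us one eigenfunction per pair $(\gamma,\tau)$, but many of these coincide or are proportional (the monodromy picture of section~\ref{monodromy} shows the genuinely new functions are indexed by the orbit $\Gamma w_1$), so I must extract an honestly infinite linearly independent subfamily. I would do this by a Gaussian-exponent argument: $r^{\varepsilon}_1(\mathrm{id},\tau;x) = \sum_i \alpha_i(\tau) e^{i\pi\tau_i|x|^2}$ is a finite combination of Gaussians whose exponents $\tau_i$ depend analytically and nonconstantly on $\tau$, so varying $\tau$ over an open set produces functions with distinct ``spectra'' of Gaussian exponents, and distinct exponents give linear independence; symmetrizing with the Fourier projector $\tfrac12(1+\varepsilon\mathcal F)$ preserves this. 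Finally I would assemble: the resulting infinite-dimensional space of $h = (h^+ + h^-)$-type combinations consists of radial Schwartz functions $f$ on $\R^2$ with $f$ and $\mathcal F(f)$ vanishing to order $2$ at every $|x| = \sqrt{2n/\sqrt3}$ — in particular at the sub-sequence $n = j^2+jk+k^2$ demanded by the conjecture, since that is a subset of all nonnegative integers — which is exactly the assertion of Theorem~\ref{mainconj}.
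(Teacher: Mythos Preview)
Your proposal has a genuine gap: you never produce a function whose value \emph{and} first $u$-derivative both vanish at the nodes. You hope that the slash identity forces ``all the interpolation data'' of $r^{\varepsilon}_1$ to vanish at $\sqrt{2n/\lambda}$, but a direct check shows this is false. Applying $(T^{-1}-I)$ to $\frac{e^{i\pi\tau u}}{i\pi\tau}$ and evaluating at $u=2n/\lambda$ gives
\[
e^{i\pi\tau\cdot 2n/\lambda}\left(\frac{1}{i\pi(\tau-\lambda)}-\frac{1}{i\pi\tau}\right)\neq 0,
\]
so $r^{\varepsilon}_1(\gamma,\tau;\sqrt{2n/\lambda})\neq 0$ in general. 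Symmetrically, $r^{\varepsilon}_0$ vanishes at the nodes but its first derivative there equals $c\,s^{\varepsilon}(\tau;\sqrt{2n/\lambda})\neq 0$. A combination $\alpha r^{\varepsilon}_0+\beta r^{\varepsilon}_1$ therefore has value $\beta\cdot(\text{nonzero})$ and derivative $\alpha\cdot(\text{nonzero})$ at the nodes, so the only way to kill both is $\alpha=\beta=0$. Your final ``Gaussian-exponent'' paragraph thus exhibits an infinite-dimensional space none of whose nonzero elements lies in the kernel you want.

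The idea you are missing, and which drives the paper's proof, is that the conjecture only requires vanishing at the \emph{sparse} set of $n=x^2+xy+y^2$, and one must actually use this sparsity rather than aim for vanishing at all integers $n$. The paper first covers the norm-form integers by a periodic set $A$ of density $<\delta$ (elementary sieve, Lemma~\ref{Acons}). It then starts from $r^{\varepsilon}_0(\tau;x)$, which already vanishes at every node, and observes that its $u$-derivative at $\sqrt{2m/\sqrt3}$ is $c\,s^{\varepsilon}(\tau;\sqrt{2m/\sqrt3})$. The work is to find a measure $\mu$ on a short horizontal segment $\X_{\delta}$ so that $\int s^{\varepsilon}(\tau;\sqrt{2m/\sqrt3})\,d\mu(\tau)=0$ for all $m\in A\setminus\{a\}$ while being $1$ at a chosen $a\in A$; this is obtained as a weak$^*$ limit of probability measures via a convex-separation argument (Proposition~\ref{mainpropp}) that only closes because the density of $A$ is small. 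Linear independence then comes from varying the distinguished point $a$ among the infinitely many $a\in A$ not of the form $x^2+xy+y^2$. None of this averaging/sparsity mechanism appears in your sketch.
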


\subsubsection{Method of the proof}
In this section we discuss some new ideas that we develop to prove Theorem~\ref{mainconj}. The first step is to cover integers  $n=x^2+xy+y^2$ by a periodic set of integers with small density. Note that this is a basic result in sieve theory and we include a proof  for the convenience  of the reader in Lemma~\ref{Acons}. 
\\

 Suppose that  
\[
A:=\left\{a>100 | a\equiv a_i \mod L, \text{ for some }a_i \text{ where } 1\leq i\leq l\right\}.
\]
Furthermore, we suppose that the density is small  $\frac{l}{L}<\delta.$  We prove a stronger version of  Theorem~\ref{mainconj} that we state next. 
\begin{theorem}\label{strongthm}
Suppose that $\delta<0.001$ and $a\in A$ is any element. 
There exists a radial Schwartz function $f$ such that  $f$ and $\mathcal{F}(f)$ vanishes of order $2$ at $\sqrt{\frac{2m}{\sqrt{3}}}$ where  $m\in A-\{a\},$ and 
\[
f'\left(\sqrt{\frac{2a}{\sqrt{3}}}\right)= 1.
\]
\end{theorem}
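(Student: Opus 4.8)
The plan is to combine the sieve input with the ``monodromy'' eigenfunctions already produced for Theorem~\ref{mainthm2}, and to reduce everything to a single infinite linear system that becomes solvable precisely because $A$ is sparse. First, by Lemma~\ref{Acons} I fix the periodic set $A$ — a union of finitely many arithmetic progressions modulo some $L$ — with $\{x^2+xy+y^2:x,y\in\Z\}\subseteq A$ and density $l/L<\delta<0.001$. Write $t_n:=\sqrt{2n/\sqrt 3}=\sqrt{2n/\lambda}$ for the interpolation nodes of $\Gamma=(2,6,\infty)$ and $u=|x|^2$ as in Theorem~\ref{mainthmder}; at each $t_n\neq 0$ the derivatives $\frac{d}{dr}$ and $\frac{d}{du}$ differ by a nonzero factor, so prescribing either is the same. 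The reduction is: \emph{it suffices to produce a real radial Schwartz $+1$-eigenfunction $g$ on $\R^2$ with $g(t_n)=0$ for all $n\ge 0$, with $\frac{d}{du}g(t_m)=0$ for all $m\in A\setminus\{a\}$, and with $\frac{d}{du}g(t_a)\neq 0$} (taking real parts if the $g$ one builds is complex-valued). Indeed, $f:=g/2$ then satisfies $\mathcal F(f)=f$, so $f$ and $\mathcal F(f)$ vanish at every node, $f'$ and $\mathcal F(f)'$ vanish at $t_m$ for $m\in A\setminus\{a\}$, and $f'(t_a)\neq 0$; after rescaling $g$ we get $f'(t_a)=1$, which is Theorem~\ref{strongthm}. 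Theorem~\ref{mainconj} follows: $A$ has positive density while $\{x^2+xy+y^2\}$ has density $0$, so choosing distinct $a_1,a_2,\dots\in A\setminus\{x^2+xy+y^2\}$ and the corresponding $f_1,f_2,\dots$, each $f_j$ and $\mathcal F(f_j)$ vanish to order $2$ at every $t_n$ with $n=x^2+xy+y^2$ (as $\{x^2+xy+y^2\}\subseteq A\setminus\{a_j\}$), and the $f_j$ are linearly independent since $f_i'(t_{a_j})\neq 0$ iff $i=j$.

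Next, the source of $g$. It cannot come from Theorem~\ref{mainthmder}: for functions in the class covered there, vanishing at all of the nodes $t_n$ already forces the function to be $0$, so $g$ must live exactly in the regime where the interpolation formula breaks down. I would build $g$ from Corollary~\ref{mainthm22} (the $k=0$ case of Theorem~\ref{mainthm2}): for $\gamma\in\Gamma$ and $\tau$ in the upper half-plane, $r^{+}(\gamma,\tau;\cdot)$ is a radial Schwartz $+1$-eigenfunction — a finite combination of Gaussians $e^{i\pi\tau_i|x|^2}$ with the $\tau_i$ just outside the region $\Im\tau,\Im(-1/\tau)>\sin(\pi/6)$ — that vanishes at every node; integral superpositions of these over compact parameter sets, together with their $\tau$-derivatives and limits (the monodromy around $w_1$ and $\Gamma w_1$ in the proof of Theorem~\ref{mainthm2}), span an infinite-dimensional space $\mathcal K$ of radial Schwartz $+1$-eigenfunctions, all vanishing at every node. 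Taking $g\in\mathcal K$ makes $g(t_n)=0$ automatic, so what is left is to choose $g\in\mathcal K$ whose derivative sequence $\beta_n:=\frac{d}{du}g(t_n)$ vanishes on $A\setminus\{a\}$ and is nonzero at $a$.

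Two constraints govern $\beta=(\beta_n)_{n\ge 0}$. By Theorem~\ref{obsthm} with $k=1,\ \varepsilon=+1$, $\beta$ is orthogonal to the finite-dimensional space $M_3^{-}(\Gamma)$; and since $(2,6,\infty)$ is arithmetic, a nonzero weight-$3$ holomorphic form has a positive proportion of nonzero Fourier coefficients, so no nonzero element of $M_3^{-}(\Gamma)$ has $q$-expansion supported inside the density-$<0.001$ set $A$ — hence ``$\beta_m=0$ on $A\setminus\{a\}$, $\beta\perp M_3^{-}(\Gamma)$, $\beta_a\neq 0$'' is solvable and leaves a finite-codimension family of continuations of $\beta$ to $\Z_{\ge 0}\setminus A$. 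The real work is to realize such a $\beta$ by an actual $g\in\mathcal K$: writing $g$ as a superposition of the generators of $\mathcal K$ and imposing $\frac{d}{du}g(t_m)=0$ for the infinitely many $m\in A\setminus\{a\}$ gives an infinite linear system for the coefficients, to be solved with coefficients decaying fast enough that $g$ is Schwartz. I would set this up as a perturbation of an invertible system — the ``diagonal'' part being carried by the density-$\delta$ set $A$ — and solve it by a Neumann series, equivalently by a contraction-mapping argument or by truncation plus a normal-families extraction. This is precisely where $\delta<0.001$ enters: together with quantitative, uniform-in-$(n,\gamma,\tau)$ bounds on the Schwartz seminorms of $r^{+}(\gamma,\tau;\cdot)$ and on the size of their values at the nodes $t_m$, the sparsity of $A$ forces the perturbation operator to have norm strictly below $1$.

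The sieve and the modular-forms input are routine and the reduction is formal; the main obstacle is the last step. It is hard for two reasons. First, one must extract sharp enough quantitative estimates on the monodromy eigenfunctions and on the weight-$1$ interpolation data behind \eqref{1bas} — growth in the depth parameter, and decay of the nodal values — which are not formal consequences of the qualitative statements quoted above and require reopening the integral representations and the slash-operator identity used for Theorem~\ref{mainthm2}. Second, one must organize the infinite system so that, for $\delta<0.001$, those estimates genuinely give a contraction, and carry out the bookkeeping needed to confirm that $0.001$ is a safe threshold. I expect the first of these to absorb most of the effort.
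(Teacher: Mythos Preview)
Your high-level plan is the paper's plan: build $g$ as an integral of the monodromy functions $r^{\varepsilon}(\tau;\cdot)$ of Corollary~\ref{mainthm22} against a measure in $\tau$, so that $g(t_n)=0$ is automatic and only the derivative conditions on $A$ remain, then solve the resulting infinite system by truncation and a compactness limit. Where you diverge from the paper is in the mechanism that makes the system tractable. You propose a Neumann/contraction argument in which the smallness comes from the \emph{combinatorial} sparsity of $A$; the paper instead extracts the smallness \emph{geometrically}. It fixes $\gamma=\mathrm{id}$ and supports $\tau$ on a short horizontal segment at height $t_0\approx 0.27$ near $w_2$; at that height one has $\Im(V^{i}\tau)\ge \Im(\tau)+0.05$ for $1\le i\le 5$ (Lemma~\ref{apprlem}), so $s^{\varepsilon}(\tau;t_m)=e^{i\pi\tau\cdot 2m/\sqrt{3}}\bigl(1+O(e^{-0.05\pi\cdot 2m/\sqrt{3}})\bigr)$, i.e.\ the derivative kernel is essentially a single complex exponential. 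The ``diagonal'' you were looking for then comes from a two-layer Fourier inversion: a continuous average $\lambda_{n,\alpha}(\theta)=1+\cos(2\pi n\theta-\alpha)$ on the segment isolates the quotient $m\bmod L$'s block, and a discrete Vandermonde average over $l$ translates of the segment isolates the residue class inside $A$ (Propositions~\ref{discprop} and~\ref{mainest}). Solvability of each truncation is then obtained not by an operator-norm contraction but by a convexity/separating-hyperplane argument on the image of the simplex of probability measures (Proposition~\ref{mainpropp}), followed by a weak$^{*}$ limit. In this scheme the condition $\delta<0.001$ enters \emph{geometrically}: it bounds the total length $\approx\sqrt{3}\,l/L$ of the support, keeping $\tau$ close enough to $w_2$ for Lemma~\ref{apprlem} to hold; it is not used to count off-diagonal terms.

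Two smaller points. First, your detour through the obstruction space $M_3^{-}(\Gamma)$ is unnecessary: Theorem~\ref{obsthm} is a constraint on the full sequence $(\beta_n)_{n\ge 0}$, not on its restriction to $A$, and the paper never invokes it here --- the construction produces $g$ directly without checking any compatibility with holomorphic forms. Second, there is no need to vary $\gamma$ or to bring in $\tau$-derivatives of $r^{+}(\gamma,\tau;\cdot)$; a single family $r^{\varepsilon}(\mathrm{id},\tau;\cdot)$ averaged over the short segment already suffices. So your skeleton is right, but the two ideas that actually make the linear system solvable --- the height-$0.27$ placement that collapses $s^{\varepsilon}$ to a single exponential, and the continuous+Vandermonde averaging that diagonalises it --- are missing from your plan, and your stated reason for why $\delta<0.001$ helps is not the operative one.
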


We only use Corollary~\ref{mainthm22} for the Hecke triangle group $\Gamma=(2,6,\infty)$ from the previous sections.  Let 
\[
r^{\varepsilon}(\tau;x):=e^{i\pi \tau |x|^2}|^{\varepsilon}_1 (T^{-1}-I)(1+V+\dots+V^5),
\]
and 
\[
s^{\varepsilon}(\tau;x):=e^{i\pi \tau |x|^2}|^{\varepsilon}_1 (1+V+\dots+V^5).
\]

By Corollary~\ref{mainthm22}
\[
r^{\varepsilon}\left(\tau;\sqrt{\frac{2m}{\sqrt{3}}}\right)=0
\]
for every  integer $m\geq0.$ It is easy to check that 
\[
\frac{d}{du}r^{\varepsilon}\left(\tau;\sqrt{\frac{2m}{\sqrt{3}}}\right)=c s^{\varepsilon}\left(\tau;\sqrt{\frac{2m}{\sqrt{3}}}\right),
\]
where $u=|x|^2$ and $c=\pi i.$ Our new idea is to average $r^{\varepsilon}(\tau;x)$ over $\tau$ with respect to a  measure $\mu$ supported on a compact region of the upper half-plane   such that the derivative vanishes at  $\sqrt{\frac{2m}{\sqrt{3}}}$  for every $m\in A-\{a\}$. More precisely, let 
\[
f(x)=\int r^{\varepsilon}(\tau;x) d\mu(\tau).
\]
Then 
\[
f\left(\sqrt{\frac{2m}{\sqrt{3}}}\right)=0, \text{ and } \frac{d}{du}f\left(\sqrt{\frac{2m}{\sqrt{3}}}\right)=c\int s^{\varepsilon}(\tau;x) d\mu(\tau).
\]
We construct $\mu$  as the weak$^*$ limit of a sequence of  measures $\{\mu_n\}$ such that 
\[
\int s^{\varepsilon}\left(\tau;\sqrt{\frac{2m}{\sqrt{3}}}\right) d\mu_n(\tau)=0
\]
where $m\in A-\{a\}$ and $0\leq m<n.$
The existence of a  weak$^*$ limit is a consequence of the compactness of the space of probability measures  on a compact Borel measure space. By computing the first derivative at $\sqrt{\frac{2a}{\sqrt{3}}}$ we show that $f(x)\neq 0.$ Constructing $\mu_n$ is  challenging and is at the heart of our proof that we discuss in Section~\ref{zeros}. In particular, $\mu_n$ is a probability measure with a support on $\X_{\delta},$ where 
\[
\X_{\delta}:= \left\{\frac{\sqrt{3}}{2}+x+i0.27: |x|<\delta\right\}.
\]

\section{Modular forms for the Hecke triangle group}\label{hecket}
We discuss  the Hecke triangle group $(2,l,\infty)$ and give an explicit  basis for the associated space of modular forms.  We refer the reader to the excellent book of Berndt and Knopp~\cite{Berndt}, and the thesis of Jonas Jermann~\cite{Jonas}.  We only prove some results that we could not  find in the literature.
\subsection{Hecke triangle group $(2,l,\infty)$}
Let 
\[
S:=\begin{bmatrix}  0 & 1 \\-1 &0
\end{bmatrix}  , \text{ and }T:=\begin{bmatrix}  1 & \lambda \\0 &1
\end{bmatrix} \text{ and } U:=ST.
\]
We consider the action of the fractional transformation on the upper half-plane. The group generated by $S$ and $T$ is the  triangle group $(2,l,\infty).$
It is also well-known that as an abstract group 
\[
\Gamma= \frac{\Z}{2\Z}*\frac{\Z}{l\Z},
\]
where $\frac{\Z}{2\Z}*\frac{\Z}{l\Z}$ is the free product of the cyclic groups and  the isomorphism is given by sending $S$ to the generator of $ \frac{\Z}{2\Z}$ and $U$ to a generator of $\frac{\Z}{l\Z}$.

\subsection{Modular forms} We record some well-known facts about the space of holomorphic modular forms for $\Gamma$; see~\cite[Chapter 5]{Berndt}  for a detailed discussion. For $z\in \CC$, $z\neq 0,$ and $r\in R$  let
\[
z^r= |z|^r e^{ir\arg(z)},
\]
where $-\pi\leq\arg(z)<\pi.$ 
Let $M_r^{\varepsilon}(\Gamma)$ be the space of holomorphic bounded functions $f(z)$ defined on the upper half-plane which satisfy 
\[
\begin{split}
f(z)=\varepsilon \left(\frac{ i}{z}\right)^r f\left(\frac{-1}{z}\right) ,
\\
f(z)=f\left(z+\lambda\right),
\end{split}
\] 
where $\varepsilon=\pm1.$ For $\gamma\in \Gamma$ and $f\in M_r^{\varepsilon}(\Gamma)$ let 
\[
j_{r}^{\varepsilon}(z,\gamma):=\frac{f(z)}{f(\gamma z)},
\]
where $f(\gamma z)\neq 0.$ It follows that 
\[
|j_{r}^{\varepsilon}(z,\gamma)|=\frac{1}{|cz+d|^r},
\]
where $\gamma=\begin{bmatrix} a & b \\ c & d  \end{bmatrix}.$ We define the slash operator acting on $h\in C(\HH)$ as
\[
h|_r^{\varepsilon}\gamma= j_{r}^{\varepsilon}(z,\gamma)h(\gamma z).
\]
We cite~\cite[Theorem 5.5]{Berndt} and~\cite[Theorem 5.6]{Berndt}.
\begin{theorem}[Theorem 5.5 of \cite{{Berndt}}]\label{zerobr}
There exist modular forms $f_{w}\in M_{\frac{4}{l-2}}^1(\Gamma),$ $f_i \in M_{\frac{2l}{l-2}}^{-1}(\Gamma)$ and $f_{\infty}\in M_{\frac{4l}{l-2}}^1(\Gamma)$ such that each has a simple root at $w_1$, $i$ and $i\infty$, respectively, and no other zeros. 
\end{theorem}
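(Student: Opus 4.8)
The plan follows the classical construction (see \cite[Chapter~5]{Berndt}), whose ingredients are the Hauptmodul of $\Gamma$ together with the valence formula; I sketch the steps.

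\emph{Hauptmodul and valence formula.} By the presentation $\Gamma\cong\Z/2\Z*\Z/l\Z$ recalled above, the quotient $\Gamma\backslash\HH^{\ast}$ is a sphere with two cone points, of orders $2$ and $l$ at the images of $i$ and $w_1$, and one cusp at $\infty$; in particular it has genus $0$, so there is a $\Gamma$-invariant holomorphic map $J\colon\HH^{\ast}\to\mathbb{P}^1$ inducing an isomorphism $\Gamma\backslash\HH^{\ast}\xrightarrow{\ \sim\ }\mathbb{P}^1$ (the Schwarz triangle function). Normalise it so that $J(w_1)=0$, $J(i)=1$, and $J(z)=q^{-1}+O(1)$ near the cusp, with $q=e^{2\pi i z/\lambda}$. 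The ramification indices of $J$ at $w_1$ and $i$ are $l$ and $2$, so $J$ and $J-1$ vanish there to $z$-order $l$ and $2$ respectively, while $J$ is a local isomorphism in $z$ elsewhere on $\HH$. Integrating $\frac{1}{2\pi i}\,d\log f$ around the boundary of the fundamental domain of Figure~\ref{heckef}, with the usual indentations at $i$, $w_1$, $\infty$, yields the valence formula: for nonzero $f\in M_r^{\varepsilon}(\Gamma)$,
\[
\mathrm{ord}_{\infty}(f)+\tfrac12\,\mathrm{ord}_i(f)+\tfrac1l\,\mathrm{ord}_{w_1}(f)+\sum_{P}\mathrm{ord}_P(f)=\frac{r(l-2)}{4l},
\]
where $\mathrm{ord}_\infty$ is the order in $q$ and $P$ runs over the remaining $\Gamma$-orbits in $\HH$. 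Hence a form whose only zero is a single simple one at $w_1$, at $i$, or at $\infty$ is unique up to a scalar, and its weight is forced to equal $\tfrac{4}{l-2}$, $\tfrac{2l}{l-2}$, or $\tfrac{4l}{l-2}$ respectively.

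\emph{Construction.} Let $\eta:=\frac{dJ}{dz}$, a meromorphic weight-$2$ form. From the ramification data, $\mathrm{div}(\eta)=(l-1)[w_1]+[i]-[\infty]$, $\mathrm{div}(J)=l[w_1]-[\infty]$, and $\mathrm{div}(J-1)=2[i]-[\infty]$. Set
\[
f_\infty:=\frac{\eta^{\,2l/(l-2)}}{J^{\,2(l-1)/(l-2)}\,(J-1)^{\,l/(l-2)}};
\]
the exponents are chosen precisely so that the zeros at $w_1$ and $i$ cancel, and a divisor count gives $\mathrm{div}(f_\infty)=[\infty]$ and weight $\tfrac{4l}{l-2}$ — so $f_\infty$ is holomorphic, nowhere zero on $\HH$, with a simple zero in $q$ at the cusp. (One could instead take $f_\infty$ to be any nonzero weight-$\tfrac{4l}{l-2}$ cusp form, e.g. a convergent Poincar\'e series, the valence formula then forcing $\mathrm{div}(f_\infty)=[\infty]$.) Now $\mathrm{div}(J f_\infty)=l[w_1]$ and $\mathrm{div}((J-1) f_\infty)=2[i]$: these functions are holomorphic on $\HH$ and vanish only on $\Gamma w_1$ to $z$-order exactly $l$, respectively only on $\Gamma i$ to $z$-order exactly $2$. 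Since $\HH$ is simply connected, $\tfrac1l\,\frac{(J f_\infty)'}{J f_\infty}$ has only simple poles with integer residue $1$, so $J f_\infty$ admits a single-valued holomorphic $l$-th root on $\HH$, and likewise $(J-1) f_\infty$ a square root. Put
\[
f_w:=(J f_\infty)^{1/l},\qquad f_i:=\bigl((J-1) f_\infty\bigr)^{1/2}.
\]
Then $\mathrm{div}(f_w)=[w_1]$ and $\mathrm{div}(f_i)=[i]$, of weights $\tfrac{4}{l-2}$ and $\tfrac{2l}{l-2}$; both are holomorphic on $\HH^{\ast}$ (bounded, $q$-order $0$, at the cusp), with exactly the asserted simple zero and no other zeros. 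As a check, $l=3$ gives $\Gamma=\mathrm{SL}_2(\Z)$ and recovers $f_\infty=\Delta$, $f_w=E_4$, $f_i=E_6$ up to scaling.

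\emph{Main obstacle.} What remains — and what needs the real work in \cite[Chapter~5]{Berndt} — is to check that $f_w$, $f_i$, $f_\infty$ transform under $T$ and $S$ with a genuine multiplier $\varepsilon\in\{\pm1\}$, and to identify it as $+1$ for $f_w$, $f_\infty$ and $-1$ for $f_i$. $T$-invariance is immediate, since $J$ and $\eta$ are $\lambda$-periodic. Under $S$, one knows $J f_\infty$ and $(J-1) f_\infty$ lie in $M^{+1}_{4l/(l-2)}(\Gamma)$, so $f_w$ and $f_i$ satisfy the required identity up to some root of unity $\zeta$ relative to the factor $(i/z)^{r}$; the point is to prove $\zeta\in\{\pm1\}$ and compute it, which comes down to tracking the branches of the fractional powers of $J$, $J-1$, $\eta$ along $S$ and near the $S$-fixed point $i$. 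This branch bookkeeping is trivial for $l=3$, where $E_4,E_6,\Delta$ are manifestly single-valued with trivial multiplier, but is the one genuinely non-formal step in passing to a general Hecke triangle group; once it is settled, the valence formula excludes any further zeros and the theorem follows. (The existence and elementary properties of $J$ used above are themselves classical; see also \cite{Jonas}.)
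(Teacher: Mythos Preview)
The paper does not prove this theorem; it is simply cited from \cite[Theorem~5.5]{Berndt} without argument. Your sketch is essentially the classical construction given there: build everything from the Hauptmodul $J$ and its derivative, read off the divisors, and extract suitable roots. Your divisor computations for $\eta$, $J$, $J-1$ agree with the data recorded in the paper (e.g.\ \eqref{jjj}), and the weights you derive from the valence formula match the statement.

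One small remark: in your displayed formula for $f_\infty$ the exponents $\tfrac{2l}{l-2}$, $\tfrac{2(l-1)}{l-2}$, $\tfrac{l}{l-2}$ are already non-integers for general $l$ (e.g.\ $l=6$ gives exponents $3,\tfrac52,\tfrac32$), so $f_\infty$ itself is a fractional power and not a priori single-valued; thus the branch issue you flag in the ``Main obstacle'' paragraph applies to all three forms, not just to $f_w$ and $f_i$. Your parenthetical alternative --- take $f_\infty$ to be a nonzero cusp form of the right weight, whose existence one argues independently, and let the valence formula pin down its divisor --- is cleaner and is in fact how Berndt--Knopp proceed. Once one has a genuine $f_\infty\in M^{+1}_{4l/(l-2)}(\Gamma)$, the roots $(Jf_\infty)^{1/l}$ and $((J-1)f_\infty)^{1/2}$ are single-valued on $\HH$ exactly as you argue, and the remaining work is the multiplier identification, which you correctly isolate as the only non-formal step and defer to \cite{Berndt}.
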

\begin{theorem}[Theorem 5.6 of \cite{Berndt}]\label{dimbr}Suppose that $\dim M_{l}^{\varepsilon}(\Gamma)\neq0.$ Then
\[
l=\frac{4m}{l-2}+1-\varepsilon,
\]
where $m\geq 1$ is an integer, and 
\[
\dim M_{l}^{\varepsilon}(\Gamma)= 1+\lfloor \frac{m+(\varepsilon-1)/2}{l} \rfloor.
\]
\end{theorem}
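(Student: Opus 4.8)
The plan is to derive the statement from the valence (``$k/12$-type'') formula for $\Gamma=(2,l,\infty)$ together with the structure of the bigraded ring of modular forms supplied by Theorem~\ref{zerobr}. (In the displayed statement the letter $l$ denotes both the triangle parameter and the weight; I write $r$ for the weight, so the claim is that $M_r^{\varepsilon}(\Gamma)\neq 0$ forces $r=\tfrac{4m}{l-2}+1-\varepsilon$ for some integer $m\geq 1$, and that then $\dim M_r^{\varepsilon}(\Gamma)=1+\lfloor\tfrac{m+(\varepsilon-1)/2}{l}\rfloor$.)

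First I would prove the valence formula: for every nonzero $f\in M_r^{\varepsilon}(\Gamma)$,
\[
\operatorname{ord}_{\infty}(f)+\tfrac12\operatorname{ord}_{i}(f)+\tfrac1l\operatorname{ord}_{w_1}(f)+\sum_{p}{}'\operatorname{ord}_{p}(f)=\frac{r(l-2)}{4l},
\]
the primed sum being over $\Gamma$-inequivalent points other than $i$, $w_1$, $\infty$. This comes from integrating $\tfrac1{2\pi i}\,d\log f$ around the boundary of the fundamental domain of Figure~\ref{heckef}: the two vertical edges cancel by $T$-periodicity, small arcs around the order-$2$ point $i$ and around the two lower vertices $w_1,w_2$ (a single $\Gamma$-orbit of order-$l$ points) contribute $\tfrac12\operatorname{ord}_i(f)$ and $\tfrac1l\operatorname{ord}_{w_1}(f)$ because the stabilizers rotate the local uniformizers by primitive roots of unity of orders $2$ and $l$, the truncation near the cusp contributes $\operatorname{ord}_{\infty}(f)$ through the $q$-expansion, and the accumulated change of the automorphy factor along the bottom arc from $w_1$ to $w_2$ produces the right-hand side. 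The normalization $\tfrac{r(l-2)}{4l}$ is confirmed on the three generators of Theorem~\ref{zerobr}: $f_w$, $f_i$, $f_\infty$ have weights $\tfrac{4}{l-2}$, $\tfrac{2l}{l-2}$, $\tfrac{4l}{l-2}$ and a single zero at $w_1$, $i$, $\infty$, and each satisfies the identity.

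Next I would read off the congruences forced by the multiplier. In the coordinate $w=\tfrac{z-i}{z+i}$ the element $S$ acts by $w\mapsto -w$, so the relation $f(-1/z)=\varepsilon\,(z/i)^{r}f(z)$ at $z=i$ pins $\operatorname{ord}_i(f)$ to a fixed residue modulo $2$; the order-$l$ stabilizer of $w_1$ pins $\operatorname{ord}_{w_1}(f)$ to a fixed residue modulo $l$. Substituting the minimal admissible values into the valence formula and noting that its left side is then a specific rational number equal to $\tfrac{r(l-2)}{4l}$ forces $r(l-2)\in\Z$ and the shape $r=\tfrac{4m}{l-2}+1-\varepsilon$ with $m\geq 1$, which is the first assertion. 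For the dimension I would then establish that $\bigoplus_{r,\varepsilon}M_r^{\varepsilon}(\Gamma)=\CC[f_w,f_i]$, with $f_\infty$ a polynomial in $f_w,f_i$: the forms $f_w$, $f_i$ are algebraically independent since their zeros lie at distinct orbifold points, and the valence formula bounds $\dim M_r^{\varepsilon}(\Gamma)$ above by precisely the number of monomials $f_w^{a}f_i^{b}$ of the correct weight and multiplier, which also bounds it below. As $f_w^{a}f_i^{b}$ has weight $\tfrac{4a+2lb}{l-2}$ and multiplier $(-1)^{b}$, the problem becomes counting $(a,b)$ with $a,b\ge 0$, $4a+2lb=r(l-2)$ and $b\equiv\tfrac{1-\varepsilon}{2}\pmod 2$; substituting $r(l-2)=4m+(1-\varepsilon)(l-2)$ this count equals $1+\lfloor\tfrac{m+(\varepsilon-1)/2}{l}\rfloor$ when nonnegative, and is $0$ otherwise, as claimed.

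I expect the \emph{main obstacle} to be the local analysis at the two elliptic points when $r\notin\Z$: making the multiplier system and the choice of branch in $(i/z)^{r}$ precise enough to determine $\operatorname{ord}_i(f)\bmod 2$ and $\operatorname{ord}_{w_1}(f)\bmod l$ exactly, rather than up to an unknown shift. The cleanest route around the branch bookkeeping is to reduce an arbitrary $f\in M_r^{\varepsilon}(\Gamma)$, by dividing out suitable monomials in $f_w$, $f_i$, $f_\infty$, to a $\Gamma$-invariant meromorphic function of the Hauptmodul, and then to run the entire argument inside the polynomial ring $\CC[f_w,f_i]$; once that reduction is in place the remaining work is the elementary lattice-point count indicated above.
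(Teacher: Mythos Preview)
The paper does not prove this statement at all: it is quoted verbatim as \cite[Theorem~5.6]{Berndt} and used as a black box, so there is no proof in the paper to compare against. Your outline is the standard route to such dimension formulas for Hecke triangle groups---valence formula, parity constraints at the elliptic points from the multiplier, and then the identification of $\bigoplus_{r,\varepsilon} M_r^{\varepsilon}(\Gamma)$ with $\CC[f_w,f_i]$ followed by a monomial count---and the arithmetic you sketch (in particular the count of $(a,b)$ with $4a+2lb=r(l-2)$ and $b\equiv\tfrac{1-\varepsilon}{2}\pmod 2$) checks out in both cases $\varepsilon=\pm1$. The obstacle you flag, namely pinning down $\operatorname{ord}_i(f)\bmod 2$ and $\operatorname{ord}_{w_1}(f)\bmod l$ when the weight is nonintegral, is real but is exactly what the division-by-$f_w,f_i,f_\infty$ trick you propose handles cleanly, so the plan is sound.
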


\subsection{Weakly holomorphic modular forms}\label{basisk}
We remove the boundedness condition in the upper half-plane, and allow a pole at the cusp at $\infty$ and obtain weakly holomorphic modular forms; see~\cite[Chapter 3]{Jonas} for a detailed discussion. We denote the space of weakly holomorphic modular forms of weight $r$ and multiplier $\varepsilon$ by 
\( 
M^{\varepsilon !}_r(\Gamma).
\)
Let $J(z)$ be Hauptmodul function for $\Gamma$. It is the unique Riemann map from the hyperbolic triangle with vertices at $w_2$, $i$ and $i\infty$ to the upper half-plane normalized such that
\[
J(w_2)=0, 
\\
J(i\infty)=\infty,
\\
J(q)=q^{-1}+O(1),
\]
where $q=e^{\frac{2\pi i z}{\lambda}}.$ 
It is well known that $J(z)\in M_0^{+!},$ and has rational coefficients~\cite{Lehner}.  
\\

Let $n_p(f)$ denote the order of vanishing of the meromorphic function $f$ at point $l.$ We write $N(f)$ for the sum of orders of  points except from $\{ i , w,i\infty  \}$.
We cite~\cite[Lemma 3.1]{Jonas} which extends  \cite[Lemma 5.1]{Berndt} to weakly holomorphic modular forms.  
\begin{lemma}[Lemma 3.1 of \cite{Jonas}]
Suppose that $f\in M_r^{\varepsilon!}$ and $f\neq 0.$ Then
\[
N(f)+n_{\infty}(f)+\frac{1}{2}n_i(f)+\frac{n_{w}(f)}{r}=\frac{r(r-2)}{4r}.
\]
\end{lemma}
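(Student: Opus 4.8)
The plan is to prove this by the classical valence (argument-principle) computation, adapting the proof of the $k/12$ formula for $\mathrm{SL}_2(\mathbf Z)$ to the triangle group $\Gamma=(2,l,\infty)$; this is the route taken by Jermann~\cite{Jonas} (extending Berndt--Knopp~\cite{Berndt}), so I only indicate how it runs.

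Fix $f\in M_r^{\varepsilon!}$ with $f\neq 0$, and let $\mathcal D$ be the closed fundamental domain of Figure~\ref{heckef}, bounded by the vertical segments $\Re(z)=\pm\cos(\pi/l)=\pm\lambda/2$ and by the unit arc $|z|=1$ joining $w_1$ to $w_2$ through $i$. Traversed counterclockwise, $\partial\mathcal D$ runs from the cusp $i\infty$ down the left side to $w_1$, along the arc through $i$ to $w_2$, and up the right side back to $i\infty$. Since $f$ is holomorphic on $\HH$ and its $q$-expansion $f=\sum_{n\geq n_\infty(f)}a_nq^n$, with $q=e^{2\pi iz/\lambda}$, has $a_{n_\infty(f)}\neq0$, the form $f$ has only finitely many zeros in $\mathcal D$; any that happen to lie on the straight sides (resp.\ on the open arc) come in $T$-equivalent (resp.\ $S$-equivalent) pairs and are handled by the standard detour of the contour, each contributing its common order once to $N(f)$.

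I would form the truncated region $\mathcal D_{Y,\rho}$ by cutting the cusp at height $\Im(z)=Y$ and excising small circular arcs of radius $\rho$ around $i$, $w_1$ and $w_2$, and then evaluate
\[
N(f)=\lim_{Y\to\infty}\ \lim_{\rho\to0}\ \frac1{2\pi i}\oint_{\partial\mathcal D_{Y,\rho}}\frac{f'(z)}{f(z)}\,dz
\]
piece by piece. (i) On the horizontal cut at height $Y$, the $q$-expansion gives $f'/f\to n_\infty(f)\cdot(2\pi i/\lambda)$ uniformly, and since $z$ changes by $-\lambda$ along the cut this piece contributes $-n_\infty(f)$. (ii) The two vertical sides are identified by $T\colon z\mapsto z+\lambda$, which reverses the induced orientation, while $f'/f$ is $T$-periodic, so their contributions cancel. (iii) On the unit arc, $S\colon z\mapsto-1/z$ carries the sub-arc from $w_1$ to $i$ onto the one from $w_2$ to $i$; logarithmically differentiating the relation $f(z)=\varepsilon(i/z)^{r}f(-1/z)$ yields $f'(z)/f(z)+r/z=z^{-2}\,f'(-1/z)/f(-1/z)$, so on folding the two sub-arcs together the $f'/f$-terms cancel and one is left with $\frac1{2\pi i}\int_{w_2}^{i}\frac{r\,dz}{z}$ along the unit arc, which equals $\frac{r(l-2)}{4l}$. (iv) A small excision arc around a point $z_0$ at which $f$ vanishes to order $m$ contributes $-m\alpha/(2\pi)$, where $\alpha$ is the interior angle of $\mathcal D$ at $z_0$: at $i$, a smooth boundary point with $\alpha=\pi$, this is $-\tfrac12 n_i(f)$; at the corners $w_1$ and $w_2$, each with $\alpha=\pi/l$ and with $n_{w_1}(f)=n_{w_2}(f)=n_w(f)$ by $T$-equivalence, the pair together gives $-\tfrac1l n_w(f)$. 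Adding (i)--(iv) and rearranging gives the asserted identity, the right-hand side being $\dfrac{r(l-2)}{4l}$ (which is $r/12$ when $l=3$).

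The only delicate point — the one to carry out carefully — is the bookkeeping in (ii)--(iv) at the places where the excision arcs around $i$, $w_1$, $w_2$ meet the straight and curved parts of $\partial\mathcal D$: one must verify that the little wedges removed near those vertices are matched exactly under $T$ and $S$, so that the cancellations in (ii) and (iii) persist as $\rho\to0$, and that the residual excision-arc angles are exactly $\pi$ at $i$ and $\pi/l$ at each $w_j$ — this is precisely where the orders $2$ and $l$ of the two elliptic fixed points produce the coefficients $\tfrac12$ and $\tfrac1l$. Everything else is a routine limit computation, and since the statement is \cite[Lemma 3.1]{Jonas} we simply cite it.
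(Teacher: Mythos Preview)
The paper does not give its own proof of this lemma: it merely quotes it as \cite[Lemma~3.1]{Jonas}, noting that it extends \cite[Lemma~5.1]{Berndt} to weakly holomorphic forms. Your sketch via the argument principle on the indented fundamental domain is exactly the standard route that Berndt--Knopp and Jermann follow, and the computations you outline in (i)--(iv) are correct; in particular your arc contribution $\tfrac{r(l-2)}{4l}$ and the elliptic corrections $\tfrac12 n_i(f)$ and $\tfrac1l n_w(f)$ are the right values. (As an aside, the displayed formula in the paper has a typographical slip: the two occurrences of $r$ in the denominators and on the right should be the triangle-group parameter $l$, so that the identity reads $N(f)+n_\infty(f)+\tfrac12 n_i(f)+\tfrac1l n_w(f)=\tfrac{r(l-2)}{4l}$, which is what your argument actually proves.)
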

 Note that 
 \[
 N(J)=0, n_{\infty}(J)=-1, n_i(J)=0, \text{ and } n_w(J)=l. 
 \]
 The derivative of $J$ is also a weakly holomorphic modular form. We have 
\[
J'(z) \in M_{2}^{-!}(\Gamma),
\]
and 
 \begin{equation}\label{jjj}
 N(J')=0, n_{\infty}(J')=-1, n_i(J')=1, \text{ and } n_w(J')=l-1. 
 \end{equation}

\subsubsection{Weakly holomorphic modular forms of weight $2k+1$} Suppose that $d(\varepsilon,k)=\dim M_{2k+1}^{-\varepsilon}(\Gamma),$ where $k\in\Z$ and $k\geq 0.$ We write $d$ for $d(\varepsilon,k)$ in this section. 
\begin{lemma}
Suppose that $d>0$, then there exists a unique $f_{d-1,k}\in M_{2k+1}^{-\varepsilon}(\Gamma)$ such that 
\[
f_{d-1,k}=q^{d-1}+\alpha_{d-1}q^{d}+O(q^{d+1}).
\]
Furthermore, $d=0$ if and only if $\varepsilon=-1$ and $k=0.$ In this case, there exists a unique $f_{-1,0}\in M_{1}^{-!}(\Gamma)$ such that
\[
f_{-1,0}=q^{-1}+\alpha_{-1}+O(q).
\]

\end{lemma}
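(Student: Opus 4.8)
\emph{Proof plan.}
The first assertion is an instance of the ``echelon'' (Miller) basis construction, the only arithmetic input being the valence formula of Lemma~3.1 of~\cite{Jonas} and the Hauptmodul $J$. For $j\ge 0$ set
\[
V_j:=\bigl\{f\in M_{2k+1}^{-\varepsilon}(\Gamma): n_\infty(f)\ge j\bigr\},
\]
so that $V_0=M_{2k+1}^{-\varepsilon}(\Gamma)$ and $V_0\supseteq V_1\supseteq\cdots$. The map $V_j\to\CC$ recording the coefficient of $q^{j}$ has kernel $V_{j+1}$, hence $\dim(V_j/V_{j+1})\le 1$. Since $J\in M_0^{+!}(\Gamma)$ is holomorphic on $\HH$ with a simple pole at the cusp and no other pole, multiplication by $J$ preserves weight and multiplier, keeps holomorphy on $\HH$, and lowers the cusp order by $1$; thus for any nonzero $f\in V_{j+1}$ the form $J^{\,n_\infty(f)-j}f$ lies in $V_j\setminus V_{j+1}$. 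Hence $V_j\neq V_{j+1}$ whenever $V_{j+1}\neq 0$, i.e.\ the cusp orders realized by nonzero elements of $V_0$ form an initial segment $\{0,1,\dots,N_0\}$ of $\Z_{\ge 0}$; and the valence formula bounds $n_\infty(f)$ from above for nonzero holomorphic $f$ of this weight, so $N_0<\infty$ and $V_{N_0+1}=0$. Telescoping the dimensions gives $\dim V_0=N_0+1$, hence $N_0=d-1$ and $\dim V_{d-1}=1$; the unique element of $V_{d-1}$ whose $q^{d-1}$-coefficient is $1$ is $f_{d-1,k}$, which then necessarily has the shape $q^{d-1}+\alpha_{d-1}q^{d}+O(q^{d+1})$, and uniqueness follows from $\dim V_{d-1}=1$.

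For the dichotomy I would evaluate $d=\dim M_{2k+1}^{-\varepsilon}(\Gamma)$ with Theorem~\ref{dimbr}. Writing $l-2=4t$ with $t\ge 1$ (possible since $l\equiv 2\bmod 4$ and $l\ge 6$), the relevant parameter there is $m=(2k-\varepsilon)t$: it is a negative integer exactly when $k=0$ and $\varepsilon=+1$, i.e.\ when the multiplier $-\varepsilon$ is $-1$, and a non-negative integer in every other case, in which the formula returns $\dim M_{2k+1}^{-\varepsilon}(\Gamma)\ge 1$. So $d=0$ occurs only in that single exceptional case, the one recorded via $M_1^{-!}(\Gamma)$ in the statement. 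There $M_1^{-1}(\Gamma)=0$, so there is no holomorphic form, but a weakly holomorphic one of weight $1$ and multiplier $-1$ with a first-order pole is obtained from generators already at hand: with $J'\in M_2^{-!}(\Gamma)$ (zero/pole data~\eqref{jjj}) and $f_w\in M_{4/(l-2)}^{1}(\Gamma)$ from Theorem~\ref{zerobr}, the quotient $J'/f_w^{\,t}$ has weight $1$ and multiplier $-1$, is holomorphic on $\HH$ (its order at the elliptic point $w$ is $(l-1)-t\ge 0$, and it is regular elsewhere), and has a pole of order exactly $1$ at $\infty$. Since $M_1^{-1}(\Gamma)=0$, the space of weight-$1$ multiplier-$(-1)$ weakly holomorphic forms with at most a simple pole is one-dimensional, so normalizing the $q^{-1}$-coefficient to $1$ produces the unique $f_{-1,0}=q^{-1}+\alpha_{-1}+O(q)$.

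The genuine content lies in the two classical facts invoked — the valence formula and the dimension formula for $\Gamma$ — together with the observation that multiplication by $J$ leaves no gap among the attainable cusp orders; granting those, everything left is linear algebra with $q$-expansions. The step I expect to require the most care is checking that $J$-multiplication (and, in the exceptional case, the division by $f_w^{\,t}$) genuinely stays inside the relevant space: one must verify holomorphy and boundedness throughout $\HH$, in particular at the elliptic points $i$ and $w$ and near the cusp, since this is precisely where the orbifold geometry of $\Gamma$ — rather than formal manipulation of series — enters.
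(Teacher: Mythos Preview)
Your proof is correct and follows essentially the same route as the paper: the filtration $V_j$ together with multiplication by the Hauptmodul $J$ is exactly the linear-independence argument the paper carries out with $\{f,fJ,\dots,fJ^{l}\}$, and in the exceptional case you construct the very same form $J'/f_w^{(l-2)/4}$. Your presentation is slightly tidier (you make the ``no gaps in cusp orders'' step explicit and you actually argue uniqueness of $f_{-1,0}$ from $M_1^{-1}(\Gamma)=0$, which the paper leaves implicit), but there is no substantive difference in method.
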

\begin{proof}
Suppose that $d>0.$  By linear algebra there exists  $f\in M_{2k+1}^{-\varepsilon}(\Gamma)$ such that 
\(
f=O(q^{l}),
\)
where $l\geq d-1.$
It follows that $\{f, fJ,\dots,fJ^{l}\}\subset M_{2k+1}^{-\varepsilon}(\Gamma)$ are linearly independent. Hence, there exists $f_{d-1,k}\in M_{2k+1}^{-\varepsilon}(\Gamma)$ such that 
\[
f_{d-1,k}=q^{d-1}+\alpha_{d-1}q^{d}+O(q^{d+1}).
\]
For $ 0\leq i\leq d-1$, let
\[
f_{i,k}:=f_{d-1,k}P_i(J(z))=q^{i}+\alpha_iq^{d}+O(q^{d+1})\in M_{2k+1}^{-\varepsilon}(\Gamma)
\]
for some real polynomial $l_i(x).$
It is easy to check that $\{f_{i,k}\}$ form a basis for $M_{2k+1}^{-\varepsilon}(\Gamma),$ and $l=d-1.$
\\

Next suppose that $d=0.$
 By Theorem~\ref{dimbr} and our assumption $l\equiv 2$ mod $4$, we have $d>0$ unless $k=0$ and $\varepsilon=1.$
Then $k=0$ and $\varepsilon=-1.$ Let
\[
f_{-1,0}:=\frac{-\frac{\lambda}{2\pi i}J'(z)}{f_{w}^{\frac{l-2}{4}}}.
\]
Note that the order of vanishing of $J'$ at $w_1$ is $l-1> \frac{l-2}{4}$, hence $f_{-1,0}\in M_{1}^{-!}(\Gamma).$ We have
\[
f_{-1,0}=q^{-1}+\alpha_{-1}+O(q).
\]
\end{proof}
Next, we introduce a duality between $M_{2k+1}^{-\varepsilon!}(\Gamma)$ and $ M_{-2k+1 }^{\varepsilon!}(\Gamma).$ 
\begin{lemma}\label{duality}
Suppose that $f=\sum_{n} a_nq^n \in M_{2k+1}^{-\varepsilon!}(\Gamma)$ and $\phi=\sum b_mq^m \in M_{-2k+1 }^{\varepsilon!}(\Gamma).$ Then,
\[
\sum_{n} a_n b_{-n}=0.
\]
\end{lemma}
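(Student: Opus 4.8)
The plan is to integrate the product $f\phi$ against $dz$ around the boundary of a truncated fundamental domain for $\Gamma$ and to read off $\sum_n a_n b_{-n}$ as the contribution of the cusp at $\infty$; this is the classical duality argument via the residue theorem.

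First I would set $g:=f\phi$. Since $f$ has weight $2k+1$ and multiplier $-\varepsilon$ while $\phi$ has weight $-2k+1$ and multiplier $\varepsilon$, the product $g$ has weight $(2k+1)+(-2k+1)=2$ and multiplier $(-\varepsilon)\varepsilon=-1$, so $g\in M_2^{-!}(\Gamma)$; in particular $g$ is holomorphic on $\HH$ and has a $q$-expansion $g=\sum_N c_N q^N$ with only finitely many nonzero terms of negative index (here $q=e^{2\pi i z/\lambda}$). Unwinding the slash operator in weight $2$ with multiplier $-1$ gives $g(-1/z)=z^2 g(z)$ and $g(z+\lambda)=g(z)$, which is precisely the statement that the $1$-form $\omega:=g(z)\,dz$ is invariant under $S$ and $T$, hence under all of $\Gamma$.

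Next I would take the fundamental domain of Figure~\ref{heckef}, truncate it at height $Y>1$ to obtain a region $D_Y$ whose boundary consists of the arc of the unit circle running $w_1\to i\to w_2$, the two vertical segments above $w_1$ and $w_2$, and the horizontal segment at height $Y$; since $\overline{D_Y}\subset\HH$, $g$ is holomorphic on a neighbourhood of it and Cauchy's theorem gives $\oint_{\partial D_Y}\omega=0$. The two vertical segments are exchanged by $T$ with opposite induced orientation, so their contributions cancel by $T$-invariance of $\omega$; likewise the sub-arcs $[w_1,i]$ and $[i,w_2]$ are swapped by $S$ with opposite induced orientation, so they cancel by $S$-invariance of $\omega$. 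Hence the whole boundary integral reduces, up to sign, to the integral over the top segment, which equals $\lambda c_0$: indeed $\int_{-\lambda/2}^{\lambda/2}q^N\,dx=0$ for every nonzero integer $N$ and equals $\lambda$ for $N=0$, so $\int g(x+iY)\,dx=\lambda c_0$ (term-by-term integration is legitimate since the $q$-expansion converges uniformly on the horizontal line). Therefore $c_0=0$. Conceptually this is just the residue theorem applied to $\omega$ as a meromorphic differential on the compact Riemann surface $\Gamma\backslash\HH^{\ast}$, whose only possible pole is at the cusp, with residue proportional to $c_0$.

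Finally I would identify $c_0$: multiplying the two expansions, the constant term of $f\phi=\bigl(\sum_n a_n q^n\bigr)\bigl(\sum_m b_m q^m\bigr)$ is $\sum_n a_n b_{-n}$, a finite sum because $a_n=0$ for $n\ll 0$ and $b_m=0$ for $m\ll 0$. Hence $\sum_n a_n b_{-n}=c_0=0$. The only delicate points are purely bookkeeping — checking the side identifications and orientations of $\partial D_Y$ so that the paired boundary integrals genuinely cancel, and justifying the interchange of sum and integral along the horizontal segment — so I do not expect a serious obstacle.
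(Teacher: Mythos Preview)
Your proof is correct: the product $f\phi$ lies in $M_2^{-!}(\Gamma)$, the associated $1$-form $g(z)\,dz$ is $\Gamma$-invariant, and the contour argument around $\partial D_Y$ forces the constant $q$-coefficient, namely $\sum_n a_n b_{-n}$, to vanish.

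The paper reaches the same conclusion by a shorter route that avoids Cauchy's theorem and the full boundary. It integrates $h:=f\phi$ directly over the arc from $w_1$ to $w_2$; since $w_2=w_1+\lambda$ and $h$ is holomorphic and $\lambda$-periodic, $\frac{1}{\lambda}\int_{w_1}^{w_2}h\,dz$ already equals the constant term. Then the single substitution $z\mapsto -1/z$ (which fixes the arc setwise and reverses its orientation) together with $h(-1/z)=z^2 h(z)$ shows this arc integral equals its own negative, hence is zero. In effect, the paper isolates the one piece of your boundary where the $S$-pairing does the work and discards the rest: your vertical-side cancellation and horizontal-top computation are replaced by the observation that the arc endpoints already differ by a period. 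Your argument is the classical residue/contour version and generalises cleanly to groups with several cusps; the paper's is more economical for this particular one-cusp setting.
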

\begin{proof}
We have 
\[
\sum_{n} a_n b_{-n}= \frac{1}{\lambda}\int_{w_1}^{w_2}f(z)g(z) dz. 
\]
We note that $f(z)g(z)\in M_{2!}^{-}(\Gamma).$ We show that 
\[
\int_{w_1}^{w_2}h(z) dz=0
\]
for every $h(z)\in M_{2!}^{-}(\Gamma).$ 
By substituting $w=\frac{-1}{z}$ in the integral, we obtain 
\[
\begin{split}
\int_{w_1}^{w_2}h(z) dz&=\int_{w_2}^{w_1}  h\left(\frac{-1}{w}\right) d (\frac{-1}{w})
\\
&=\int_{w_2}^{w_1}  h\left(\frac{-1}{w}\right) \frac{1}{w^2} dw
\\
&=\int_{w_2}^{w_1} h(w) dw= -\int_{w_1}^{w_2}h(z) dz,
\end{split}
\]
where we used 
\(
h(w)= \frac{ 1}{w^2} h\left(\frac{-1}{w}\right).
\) This implies that 
\[
\int_{w_1}^{w_2}h(z) dz=0. 
\]
\end{proof}
Let 
 \begin{equation}\label{jder}
 \phi_{d,k}^{\varepsilon}(z):=\frac{\lambda}{2\pi i} \frac{J'(z)}{f_{d-1,k}} 
 \end{equation}
It follows form~\eqref{jjj} that  $ \phi_{d,k}\in M_{-2k+1}^{\varepsilon !}(\Gamma).$ Moreover, by Lemma~\ref{duality} $\phi_{d,k}^{\varepsilon}(z)$   has the following coefficients
\[
\phi_{d,k}^{\varepsilon}(z)=q^{-d}-\sum_{i=0}^{d-1} \alpha_i q^{-i}+O(q).
\]

Let $Q(x)$ be a polynomial with complex coefficients. Define
\[
\mathcal{Q}^{\varepsilon}(z):=Q(J^{+}(q)) \phi_{d,k}^{\varepsilon}(z).
\]
It is easy to check that $\mathcal{Q}^{\varepsilon}_k(z)$ satisfies the modular transformation of a weight $-2k+1$ modular form with multiplier $\varepsilon$; see \cite[Proposition 3.13]{Jonas}. Moreover,
\[\mathcal{Q}^{\varepsilon}(z)= O(q^{-(\deg(Q)+d)}), \text{ as }z\to \infty.\]

Let   
\begin{equation}\label{defeqqq}
\phi_{n,k}^{\varepsilon}:=Q^{\varepsilon}_{n,k}(J)\phi_{d,k}^{\varepsilon}(z)= q^{-n}+O(q^{-d(\varepsilon,k)+1}), n\geq d(\varepsilon,k),
\end{equation}
be the unique modular forms of weight $-2k+1$ with respect to $\Gamma.$
Let 
\[
K_k^{\varepsilon}(z,\tau):=\sum_{n\geq d(\varepsilon,k)}^{\infty} \phi_{n,k}^{\varepsilon}(z)  e^{\frac{2\pi i \tau n}{\lambda}}.
\]
Zagier proved an explicit formula for a special case of the above generating series for the modular curve  in~\cite{Zagiertr}. 
The following Proposition is stated for $d(\varepsilon,k)>0$ in ~\cite[Theorem 3.14]{Jonas}. We  write a proof for $d(\varepsilon,k)\geq 0 $ that follows  closely the proof of~\cite[Theorem 2]{Jenkins}.
\begin{proposition}\label{genf}
We have
\[
K_k^{\varepsilon}(z,\tau)=\frac{\phi_{d,k}^{\varepsilon}(z)f_{d-1,k}^{-\varepsilon}(\tau)}{J(\tau)-J(z)}.
\]

In particular, $K_k^{\varepsilon}(z,\tau)$ is a modular form of weight $2k+1$ and multiplier $-\varepsilon$ with respect to $\tau$ and of weight $1-2k$ and multiplier $\varepsilon$ with respect to $z.$
\end{proposition}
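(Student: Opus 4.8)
The plan is to prove the closed form by the Zagier--duality generating--function argument, exactly as in the proof of \cite[Theorem 2]{Jenkins} (and \cite[Theorem 3.14]{Jonas}), carried out for the group $\Gamma$ and including the degenerate case $d(\varepsilon,k)=0$. Write $d=d(\varepsilon,k)$, $q=e^{2\pi iz/\lambda}$, $p=e^{2\pi i\tau/\lambda}$ and
\[
R(z,\tau):=\frac{\phi_{d,k}^{\varepsilon}(z)\,f_{d-1,k}^{-\varepsilon}(\tau)}{J(\tau)-J(z)}.
\]
Since $J(\tau)=p^{-1}+O(1)$, formal inversion gives $\frac{1}{J(\tau)-J(z)}=\sum_{m\ge1}P_m(J(z))\,p^{m}$ with $P_m\in\CC[x]$ monic of degree $m-1$; multiplying by $f_{d-1,k}^{-\varepsilon}(\tau)=p^{d-1}+O(p^{d})$ and by the $\tau$--constant $\phi_{d,k}^{\varepsilon}(z)$ and collecting powers of $p$ gives, for $\Im\tau$ large,
\[
R(z,\tau)=\sum_{n\ge d}c_n(z)\,p^{n},\qquad c_n(z)=\bigl(J(z)^{\,n-d}+\dotsb\bigr)\phi_{d,k}^{\varepsilon}(z),
\]
the bracket being a monic polynomial of degree $n-d$. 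As $J\in M_0^{+!}(\Gamma)$ and $\phi_{d,k}^{\varepsilon}\in M_{1-2k}^{\varepsilon!}(\Gamma)$, each $c_n\in M_{1-2k}^{\varepsilon!}(\Gamma)$, and $c_n=q^{-n}+O(q^{-n+1})$. Since $K_k^{\varepsilon}(z,\tau)=\sum_{n\ge d}\phi_{n,k}^{\varepsilon}(z)\,p^{n}$ by definition, the proposition is equivalent to $c_n=\phi_{n,k}^{\varepsilon}$ for every $n\ge d$, and the latter identity also immediately yields the two transformation statements: $R(z,\tau)$ is manifestly of weight $2k+1$ and multiplier $-\varepsilon$ in $\tau$ (a weight-$(2k+1)$, multiplier-$(-\varepsilon)$ form divided by the $\Gamma$--invariant $J(\tau)-J(z)$), and symmetrically of weight $1-2k$ and multiplier $\varepsilon$ in $z$.

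By \eqref{defeqqq}, $\phi_{n,k}^{\varepsilon}$ is the unique element of $M_{1-2k}^{\varepsilon!}(\Gamma)$ with $q$--expansion $q^{-n}+O(q^{-d+1})$; since $c_n$ already has leading term $q^{-n}$, it suffices to show that the $q$--expansion of $c_n$ has no monomial $q^{-j}$ with $d\le j\le n-1$, i.e.\ that $c_n-\phi_{n,k}^{\varepsilon}\in M_{1-2k}^{\varepsilon!}(\Gamma)$ vanishes. This is where the duality of Lemma~\ref{duality} enters: expanding $R(z,\tau)$ instead in powers of $q$ (valid for $\Im z$ large) gives $R(z,\tau)=\sum_{m\ge-(d-1)}\widetilde c_m(\tau)\,q^{m}$ with each $\widetilde c_m\in M_{2k+1}^{-\varepsilon!}(\Gamma)$, so that in this expansion $R$ has \emph{no} monomial $q^{-j}$ with $j\ge d$; pairing the coefficients $c_n$ against the canonical basis of $M_{2k+1}^{-\varepsilon!}(\Gamma)$ supplied by the valence formula for $\Gamma$ (Theorems~\ref{zerobr} and~\ref{dimbr}) then forces, via Lemma~\ref{duality}, the intermediate coefficients of $c_n$ to vanish, exactly as in \cite[Theorem 2]{Jenkins}. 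This identifies $c_n$ with $\phi_{n,k}^{\varepsilon}$ up to an element of $M_{1-2k}^{\varepsilon}(\Gamma)$, which is trivial (negative weight for $k\ge1$; for $k=0$ trivial or one--dimensional, and in the one--dimensional case one matches in addition the constant term). Alternatively one may finish on the $\tau$--side: the residues of $R(z,\cdot)$ and of $K_k^{\varepsilon}(z,\cdot)$ at $\tau=z$ agree --- a direct computation using that $\phi_{d,k}^{\varepsilon}f_{d-1,k}^{-\varepsilon}$ is a constant multiple of $J'$ by \eqref{jder} --- and both are holomorphic at $\tau=i\infty$ with a zero of order $\ge d$, so $R(z,\cdot)-K_k^{\varepsilon}(z,\cdot)$ lies in the subspace of $M_{2k+1}^{-\varepsilon}(\Gamma)$ of forms vanishing to order $\ge d=\dim M_{2k+1}^{-\varepsilon}(\Gamma)$ at the cusp, which is $\{0\}$.

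The case $d(\varepsilon,k)=0$, the special situation isolated in the lemma before \eqref{jder} (where necessarily $k=0$), needs no new input: one takes $f_{d-1,k}=f_{-1,0}=q^{-1}+\alpha_{-1}+O(q)\in M_1^{-!}(\Gamma)$ and $\phi_{0,k}^{\varepsilon}=\phi_{0,0}^{\varepsilon}$, the $p$--series of the first step now begins at $p^{0}$ (matching $K_k^{\varepsilon}=\sum_{n\ge0}\phi_{n,0}^{\varepsilon}(z)p^{n}$), and the argument of the second paragraph applies verbatim for $n\ge1$ while for $n=0$ there is nothing to check. I expect the real obstacle to be the vanishing of the intermediate coefficients of $c_n$ in the second paragraph --- that is, showing that the monic polynomial produced by the bare $p$--expansion in the first step is \emph{precisely} the normalising polynomial of \eqref{defeqqq}, equivalently that the generating function has a closed form whose numerator has degree zero in $J(z)$. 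This is exactly what the duality of Lemma~\ref{duality} delivers and is the heart of the Jenkins--type argument; everything else (convergence of the $p$--expansion, analytic continuation, the dimension counts) is routine given the valence formula for $\Gamma$ and the bound $|\phi_{n,k}^{\varepsilon}(z)|\ll|J(z)|^{n}$ on compacta.
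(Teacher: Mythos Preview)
Your overall plan—expand $R(z,\tau)$ as $\sum_{n\ge d}c_n(z)\,p^n$ with each $c_n\in M_{1-2k}^{\varepsilon!}(\Gamma)$ of leading term $q^{-n}$, then identify $c_n$ with $\phi_{n,k}^{\varepsilon}$—is the right shape and matches the paper, and your handling of the $d=0$ case is fine. The gap is precisely at the identification step, which you flagged as the ``real obstacle'' but did not actually close.

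The duality argument does not do what you claim. Lemma~\ref{duality} says that \emph{every} pair $(\phi,f)\in M_{1-2k}^{\varepsilon!}\times M_{2k+1}^{-\varepsilon!}$ satisfies $\sum_n a_nb_{-n}=0$; it therefore gives no constraint on $c_n$ beyond membership in $M_{1-2k}^{\varepsilon!}$, which you already know. Concretely, $c_n-\phi_{n,k}^{\varepsilon}$ lies in the $(n-d)$-dimensional span of $\phi_{d,k}^{\varepsilon},\dots,\phi_{n-1,k}^{\varepsilon}$, and pairing against the $d$ holomorphic forms $f_{0,k},\dots,f_{d-1,k}$ yields relations already satisfied by every element of that span. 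Your appeal to the $q$-expansion of $R$ starting at $q^{1-d}$ does not help either: that expansion is valid for $\Im z>\Im\tau$, on the opposite side of the polar locus $J(\tau)=J(z)$ from the region where the $p$-expansion holds, so the two double expansions genuinely disagree (e.g.\ $\tfrac{1}{J(\tau)-J(z)}$ expanded first in $p$ has a nonzero $q^{-1}p^{2}$ coefficient, while expanded first in $q$ it has none). Finally, the alternative residue argument is circular: one cannot speak of the residue of $K_k^{\varepsilon}(z,\cdot)$ at $\tau=z$, nor of its modularity in $\tau$, before the series is known to continue meromorphically to that region—and the paper establishes this (Lemma~\ref{inbd}) only \emph{after} the closed form.

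The paper closes the gap by a direct contour computation of the $p^n$-coefficient of $R$: write $\frac{1}{\lambda}\int_{\tau_0}^{\tau_0+\lambda}R(z,\tau)\,p^{-n}\,d\tau$, use \eqref{jder} to rewrite $f_{d-1,k}^{-\varepsilon}(\tau)=\frac{\lambda}{2\pi i}\,J'(\tau)/\phi_{d,k}^{\varepsilon}(\tau)$, replace $p^{-n}$ by $\phi_{n,k}^{\varepsilon}(\tau)$ (the difference divided by $\phi_{d,k}^{\varepsilon}(\tau)$ is $O(p)$, so its period integral vanishes), and then change variables to $J(\tau)$ to evaluate the resulting Cauchy integral as $\phi_{d,k}^{\varepsilon}(z)\,Q_{n,k}^{\varepsilon}(J(z))=\phi_{n,k}^{\varepsilon}(z)$. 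This is the step actually carried out in \cite[Theorem~2]{Jenkins}; the duality is a \emph{consequence} of the two-variable identity, not an input to its proof.
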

\begin{proof}
 By~\eqref{jder}, it is enough to show that 
\[
 \frac{1}{\lambda}K_k^{\varepsilon}(z,\tau)=-\frac{1}{2\pi i}\frac{J'(\tau)\phi_{d,k}^{\varepsilon}(z)}{\left(J(\tau)-J(z)\right)\phi_{d,k}^{\varepsilon}(\tau)}.
\]

 For $\Im\tau>\Im z,$ the generating series is convergent and by circle method, 
\[
\phi_{n,k}^{\varepsilon}(z)= \frac{1}{\lambda}\int_{\tau_0}^{\tau_0+\lambda} K_k^{\varepsilon}(z,\tau)q(-n\tau) d\tau,
\]
where $\Im(\tau_0)>\Im(z).$
It is enough to show that for $n\geq d$
\[
\phi_{n,k}^{\varepsilon}(z)=-\frac{1}{2\pi i}\int_{\tau_0}^{\tau_0+\lambda}\frac{J'(\tau)\phi_{d,k}^{\varepsilon}(z)}{\left(J(\tau)-J(z)\right)\phi_{d,k}^{\varepsilon}(\tau)}q(-n\tau) d\tau.
\]
Since  $f_{d-1,k}^{-\varepsilon}(\tau)q(-n\tau) $ is holomorphic at cusp $\infty$ for every $n< d$ and  by \eqref{jder},  we have 
\[
\int_{\tau_0}^{\tau_0+\lambda} \frac{J'(\tau)\phi_{d,k}^{\varepsilon}(z)}{\left(J(\tau)-J(z)\right)\phi_{d,k}^{\varepsilon}(\tau)}q(-n\tau) d\tau=\int_{\tau_0}^{\tau_0+\lambda} \frac{J'(z)f_{d-1,k}^{-\varepsilon}(\tau)}{\left(J(\tau)-J(z)\right)f_{d-1,k}^{-\varepsilon}(z)}q(-n\tau) d\tau=0.
\]
For $n\geq d$, we note that
\[
\phi_{n,k}^{\varepsilon}(\tau)-q(-n\tau)=O(q(-(d-1)\tau)). 
\]
Hence, 
\[
\int_{\tau_0}^{\tau_0+\lambda} \frac{J'(\tau)\phi_{d,k}^{\varepsilon}(z)}{\left(J(\tau)-J(z)\right)\phi_{d,k}^{\varepsilon}(\tau)}\left(\phi_{n,k}^{\varepsilon}(\tau)-q(-n\tau)\right) d\tau=0.
\]
Therefore, 
\[
\begin{split}
-\frac{1}{2\pi i}\int_{\tau_0}^{\tau_0+\lambda} \frac{J'(\tau)\phi_{d,k}^{\varepsilon}(z)}{\left(J(\tau)-J(z)\right)\phi_{d,k}^{\varepsilon}(\tau)}q(-n\tau) d\tau&=-\frac{1}{2\pi i}\int_{\tau_0}^{\tau_0+\lambda} \frac{J'(\tau)\phi_{d,k}^{\varepsilon}(z)}{\left(J(\tau)-J(z)\right)\phi_{d,k}^{\varepsilon}(\tau)}\phi_{n,k}^{\varepsilon}(\tau)d\tau
\\
&=\phi_{d,k}^{\varepsilon}(z)\frac{-1}{2\pi i}\int_{J(\tau_0)}^{J(\tau_0+\lambda)} \frac{\mathcal{Q}^{\varepsilon}_{n,k}(J(\tau) )}{\left(J(\tau)-J(z)\right)}dJ(\tau)
\\
&=\phi_{d,k}^{\varepsilon}(z)\mathcal{Q}^{\varepsilon}_{n,k}(J(z) )  = \phi_{n,k}^{\varepsilon}(z).
\end{split}
\]
\end{proof}
\begin{remark}
Note that there is a duality between  weights $-2k+1$ and  $2k+1.$ More precisely, 
for every $k\geq 0,$ we have 
\[
\sum_{n\geq d(\varepsilon,k)}^{\infty} \phi_{n,k}^{\varepsilon}(z)  e^{2\pi i \tau \frac{n}{\lambda}}=\frac{\phi_{d,k}^{\varepsilon}(z)f_{d-1,k}^{-\varepsilon}(\tau)}{\left(J(\tau)-J(z)\right)}= -\sum_{n\geq 1-d(\varepsilon,k)}^{\infty} f_{n,k}^{-\varepsilon}(\tau)  e^{2\pi i z \frac{n}{\lambda}}.
\]
\end{remark}

\subsection{Modular integrals} 
For the  $k$-th derivatives interpolation formula, we introduce  $F_k^{\varepsilon}(\tau,x)$ which is holomorphic for $\tau>\sin\left(\frac{\pi}{l}\right)$ (it has multiple values with analytic continuation for $\sin\left(\frac{\pi}{l}\right) \geq\Im(\tau)>0$) such that
\begin{equation}\label{cobn}
\begin{split}
F_k^{\varepsilon}(\tau,x)|_{2k+1}{I+\varepsilon S}=\frac{e^{\pi i \tau x^2}}{(i\pi \tau)^k} |_{2k+1}{I+\varepsilon S},
\\
F_k^{\varepsilon}(\tau+\lambda,x)=F_k^{\varepsilon}(\tau,x),
\end{split}
\end{equation}
where $\Im (\tau),\Im(\frac{-1}{\tau})>\sin\left(\frac{\pi}{l}\right).$
Following Knopp~\cite{Knopp,Knoppm}, we denote the solution to the above functional equations by Modular integrals, and the given function on the  right hand side by the period function. We find a solution to the above functional equations in section~\ref{modint}. Our method is based on the work Radchenko and   Viazovska~\cite{inter} which follows closely the work of Duke, Imamo\={g}lu, and  T\'{o}th~\cite{Dukec}.

\section{Interpolation basis}
\subsection{Interpolation basis for higher derivatives}
We define a family of   eigenfunctions of the Fourier transformation with eigenvalue $\varepsilon$ such that their $k$-derivatives  vanish with order $1$ on all except one point of the form
$\sqrt{\frac{2n}{\lambda}}$ where $n\geq d(\varepsilon,k)=\dim M_{2k+1}^{-\varepsilon}(\Gamma).$ Recall that
\[
a_{n,k}^{\varepsilon}(x):=   \frac{1}{\lambda}\int_{w_1}^{w_2}\phi_{n,k}^{\varepsilon}(z)\frac{e^{\pi i z |x|^2}}{(iz\pi)^k}  dz.
\]
\begin{lemma}
We have 
\[
\widehat{a_{n,k}^{\varepsilon}}(\xi)=\varepsilon a_{n,k}^{\varepsilon}(\xi).
\]
\end{lemma}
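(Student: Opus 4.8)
The plan is to show that $a_{n,k}^{\varepsilon}$ is a $\varepsilon$-eigenfunction of the Fourier transform by reducing the claim to the modular transformation property of the integrand under $S$, exactly as in the weight-$3/2$, one-dimensional argument of Radchenko and Viazovska. First I would recall that for $x\in\R^2$ the two-dimensional Gaussian satisfies $\mathcal{F}\left(e^{\pi i z|x|^2}\right)(\xi)=\frac{i}{z}e^{\pi i(-1/z)|\xi|^2}$, so that
\[
\mathcal{F}\!\left(\frac{e^{\pi i z|x|^2}}{(iz\pi)^k}\right)(\xi)=\left(\frac{i}{z}\right)^{2k+1}\pi^{-k}\,\frac{e^{\pi i(-1/z)|\xi|^2}}{(i(-1/z))^k}\cdot(\text{bookkeeping of }\pi\text{'s}),
\]
which one organizes cleanly by noting that the normalized Gaussian $G_k(z,x)=e^{\pi i z|x|^2}/(iz\pi)^k$ obeys $\mathcal{F}G_k(z,\cdot)(\xi)=(i/z)^{2k+1}G_k(-1/z,\xi)$ — this is the same identity already used in the proof of Theorem~\ref{obsthm}. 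The exchange of $\mathcal{F}$ with the contour integral in \eqref{1bas} is justified because $\phi_{n,k}^{\varepsilon}$ is bounded on the compact arc from $w_1$ to $w_2$ (it is holomorphic there, the only pole being at the cusp $\infty$) and the Gaussian decays rapidly, so Fubini applies.

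Granting the interchange, I would compute
\[
\widehat{a_{n,k}^{\varepsilon}}(\xi)=\frac{1}{\lambda}\int_{w_1}^{w_2}\phi_{n,k}^{\varepsilon}(z)\left(\frac{i}{z}\right)^{2k+1}G_k\!\left(\frac{-1}{z},\xi\right)dz,
\]
and then substitute $w=-1/z$. Under this substitution $dz=dw/w^2$, the arc from $w_1$ to $w_2$ maps to itself with reversed orientation (since $S$ fixes the geodesic through $w_1,w_2$ setwise and swaps the endpoints: $-1/w_1=w_2$), and $\phi_{n,k}^{\varepsilon}$ transforms by its defining modular rule $\phi_{n,k}^{\varepsilon}(-1/w)=\varepsilon\,(w/i)^{-2k+1}\phi_{n,k}^{\varepsilon}(w)$ — recall $\phi_{n,k}^{\varepsilon}\in M_{-2k+1}^{\varepsilon!}(\Gamma)$. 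Collecting the factors $(i/z)^{2k+1}=(w/i)^{-(2k+1)}$ from the Fourier kernel, $(w/i)^{2k-1}$ from modularity of $\phi_{n,k}^{\varepsilon}$ written in the $j^{\varepsilon}$-convention, $w^{-2}$ from the Jacobian, and $\varepsilon$ from the multiplier, all the $w$-powers cancel and one is left with $\varepsilon$ times $\frac{1}{\lambda}\int_{w_1}^{w_2}\phi_{n,k}^{\varepsilon}(w)G_k(w,\xi)\,dw=\varepsilon\,a_{n,k}^{\varepsilon}(\xi)$, where the orientation reversal from the endpoint swap is absorbed together with a sign bookkeeping in $(i/z)^{2k+1}$ versus the branch of $z^{r}$ fixed by the convention $-\pi\le\arg(z)<\pi$.

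The main obstacle is precisely this branch-cut and sign bookkeeping: one must check that the definition $z^r=|z|^re^{ir\arg z}$ with $\arg\in[-\pi,\pi)$ is consistent along the whole arc (which stays in the upper half-plane, so $\arg z\in(0,\pi)$ and no branch is crossed), and that the factor $(i/z)^{2k+1}$ produced by $\mathcal{F}$ matches the $j_{2k+1}^{\varepsilon}(\,\cdot\,,S)$ appearing in the slash operator, so that the net multiplier is exactly $\varepsilon$ and not $-\varepsilon$ or $i^{\pm 1}$ times it. Once the convention in \eqref{cobn} and the weight conventions $f|_k^{\varepsilon}S=\varepsilon(i/z)^kf(-1/z)$ are pinned down, the cancellation is forced. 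I would also remark that Schwartz decay and radiality of $a_{n,k}^{\varepsilon}$ follow from the rapid decay of $G_k(z,x)$ in $x$ uniformly for $z$ on the compact arc together with differentiation under the integral sign; but for this lemma only the eigenfunction identity is needed.
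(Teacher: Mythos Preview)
Your proposal is correct and follows essentially the same route as the paper's own proof: interchange the Fourier transform with the contour integral, use the Gaussian identity $\mathcal{F}G_k(z,\cdot)(\xi)=(i/z)^{2k+1}G_k(-1/z,\xi)$, substitute $w=-1/z$ (which swaps $w_1\leftrightarrow w_2$), and invoke the weight $-2k+1$, multiplier $\varepsilon$ modularity of $\phi_{n,k}^{\varepsilon}$ to collapse all the $w$-powers. The paper's computation is the same step-by-step calculation you outline, just written out line by line without the explicit Fubini justification or the branch discussion.
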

\begin{proof} We have 
\[
\begin{split}
\widehat{a_{n,k}^{\varepsilon}}(\xi)
&=  \int_{\R^2}  \frac{1}{ \lambda}\int_{w_1}^{w_2}\phi_{n,k}^{\varepsilon}(z)\frac{e^{\pi i z |x|^2}}{(iz\pi)^k}   e^{2 \pi i \left<x,\xi \right>}dz dx
\\
 &= \frac{1}{(i\pi)^k \lambda}\int _{w1}^{w_2}\phi_{n,k}^{\varepsilon}(z)\frac{i}{z^{k+1}} e^{\pi i \frac{-1}{z} |\xi|^2} dz 
 \\
  &= \frac{1}{(i\pi)^k \lambda}\int _{w2}^{w_1} \phi_{n,k}^{\varepsilon}\left(\frac{-1}{v}\right)i(-v)^{k+1} e^{\pi i v |\xi|^2} v^{-2} dv 
  \\
    &= \frac{1}{(i\pi)^k\lambda}\int _{w2}^{w_1} -\phi_{n,k}^{\varepsilon}\left(\frac{-1}{v}\right)\left(\frac{i}{v}\right)^{-2k+1} \frac{e^{\pi i v |\xi|^2}}{v^k}  dv 
  \\
  &=  \frac{\varepsilon}{\lambda}\int _{w2}^{w_1} -\varepsilon\phi_{n,k}^{\varepsilon}(v)  \frac{e^{\pi i v |\xi|^2}}{(i\pi v)^k}  dv =\varepsilon a_{n,k}^{\varepsilon}(\xi).
\end{split}
\]
where $v=\frac{-1}{z}.$
\end{proof}

Recall that $u=|x|^2.$
\begin{lemma} For $m,n \geq d(\varepsilon,k)$, we have 
\[
\frac{d^k}{du^k} a_{n,k}^{\varepsilon}\left(\sqrt{\frac{2m}{\lambda}}\right)
=\delta(m,n).
\]
\end{lemma}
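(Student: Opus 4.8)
The plan is to compute $\frac{d^k}{du^k} a_{n,k}^{\varepsilon}(x)$ explicitly and then evaluate at $u = |x|^2 = \frac{2m}{\lambda}$, reducing the claim to the defining $q$-expansion properties of $\phi_{n,k}^{\varepsilon}$. First I would differentiate under the integral sign: since $a_{n,k}^{\varepsilon}(x) = \frac{1}{\lambda}\int_{w_1}^{w_2}\phi_{n,k}^{\varepsilon}(z)\frac{e^{\pi i z u}}{(iz\pi)^k}\,dz$ with $u = |x|^2$, and the factor $e^{\pi i z u}$ contributes $(\pi i z)^k$ upon $k$-fold differentiation in $u$, the $(iz\pi)^k$ in the denominator is cancelled exactly, giving
\[
\frac{d^k}{du^k} a_{n,k}^{\varepsilon}(x) = \frac{1}{\lambda}\int_{w_1}^{w_2}\phi_{n,k}^{\varepsilon}(z) e^{\pi i z u}\,dz.
\]
Differentiation under the integral is justified because $\phi_{n,k}^{\varepsilon}$ is holomorphic (hence bounded) on the compact arc from $w_1$ to $w_2$ and $e^{\pi i z u}$ is smooth in $u$.

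Next I would deform the contour. The integrand $\phi_{n,k}^{\varepsilon}(z)e^{\pi i z u}$ is holomorphic in the region $\Im z > \sin(\pi/l)$ (the arc from $w_1$ to $w_2$ bulges up to height $\sin(\pi/l)$ at $z = i\sin(\pi/l)$, so slightly above it the function is fine, and $\phi_{n,k}^{\varepsilon}$ has no poles in the upper half-plane away from the cusp). Using the periodicity $\phi_{n,k}^{\varepsilon}(z+\lambda) = \phi_{n,k}^{\varepsilon}(z)$ together with the rapid decay of $e^{\pi i z u}$ as $\Im z \to \infty$ (for $u > 0$), I would push the contour up to a horizontal line $\Im z = C$ with $C$ large; the two vertical segments connecting $w_1$ to $-\tfrac{\lambda}{2}+iC$ and $w_2$ to $\tfrac{\lambda}{2}+iC$ cancel by periodicity. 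This turns the integral into $\frac{1}{\lambda}\int_{-\lambda/2+iC}^{\lambda/2+iC}\phi_{n,k}^{\varepsilon}(z)e^{\pi i z u}\,dz$, which is exactly the operation that extracts a Fourier coefficient.

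Then I would insert the $q$-expansion $\phi_{n,k}^{\varepsilon}(z) = q^{-n} + O(q^{-d(\varepsilon,k)+1})$ from~\eqref{defeqqq}, where $q = e^{2\pi i z/\lambda}$, and set $u = \frac{2m}{\lambda}$ so that $e^{\pi i z u} = e^{2\pi i z m/\lambda} = q^m$. Term-by-term integration over the horizontal segment picks out the coefficient of $q^0$ in $q^m\cdot\phi_{n,k}^{\varepsilon}(z)$, i.e. the coefficient of $q^{-m}$ in $\phi_{n,k}^{\varepsilon}$. Since $\phi_{n,k}^{\varepsilon} = q^{-n} + (\text{terms } q^{j} \text{ with } j \geq -d(\varepsilon,k)+1)$ and both $m, n \geq d(\varepsilon,k)$, the coefficient of $q^{-m}$ is $1$ if $m = n$ and $0$ otherwise (the "$O(q^{-d(\varepsilon,k)+1})$" tail can only contribute to $q^{j}$ with $j > -d(\varepsilon,k) \geq -m$, so it never hits $q^{-m}$). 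This gives precisely $\delta(m,n)$.

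The main obstacle, and the only genuinely delicate point, is justifying the contour deformation and the term-by-term integration: one must check that the shifted contour stays in the domain of holomorphy of $\phi_{n,k}^{\varepsilon}$ — which requires knowing $\phi_{n,k}^{\varepsilon}$ is holomorphic on all of $\HH$ with its only singularity at the cusp $\infty$, true since it is a weakly holomorphic modular form — and that the $q$-expansion converges uniformly on $\Im z = C$ so that summation and integration commute. The vertical-segment cancellation also uses that the arc endpoints $w_1$ and $w_2$ differ exactly by $\lambda = 2\cos(\pi/l)$ in real part (they have the same imaginary part $\sin(\pi/l)$), so the two connecting vertical segments are genuine $\lambda$-translates of each other and cancel by the periodicity of $\phi_{n,k}^{\varepsilon}$. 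Everything else is the routine bookkeeping of Fourier coefficient extraction.
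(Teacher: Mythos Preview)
Your argument is essentially correct and lands on the same mechanism as the paper, but there is one small slip and the contour deformation is more than you need.

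The slip: you attribute the cancellation of the two vertical segments to the $\lambda$-periodicity of $\phi_{n,k}^{\varepsilon}$ alone. But the integrand is $\phi_{n,k}^{\varepsilon}(z)\,e^{\pi i z u}$, and the exponential factor is $\lambda$-periodic only when $e^{\pi i \lambda u}=1$, i.e.\ precisely when $u=2m/\lambda$ for an integer $m$. So you must specialize $u$ \emph{before} deforming the contour; for generic $u>0$ the vertical pieces do not cancel. (Your remark about rapid decay of $e^{\pi i z u}$ as $\Im z\to\infty$ is also a red herring: when $m<n$ the product $\phi_{n,k}^{\varepsilon}(z)e^{\pi i z u}$ actually grows at the cusp, so you are not sending $C\to\infty$ --- you are just moving to a finite horizontal line and reading off a Fourier coefficient.)

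The paper's proof skips the contour deformation entirely. Since $w_2-w_1=\lambda$, the arc already spans one full period, so each Fourier mode $q^{j}$ of $\phi_{n,k}^{\varepsilon}$ is integrated directly over the arc via the fundamental theorem of calculus: at $u=2m/\lambda$ the mode contributes $\frac{1}{\lambda}\int_{w_1}^{w_2} q^{j+m}\,dz$, whose antiderivative is $\frac{\lambda}{2\pi i(j+m)}q^{j+m}$ when $j+m\neq 0$, and $q^{j+m}(w_2)=q^{j+m}(w_1)$ because $w_2=w_1+\lambda$. The lone surviving term is $j+m=0$, i.e.\ the principal term $j=-n$, which contributes $1$ exactly when $m=n$. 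This is your Fourier-coefficient extraction, just carried out on the original contour. Both arguments rest on the same identity $w_2=w_1+\lambda$; your deformation makes the ``Fourier coefficient'' interpretation more visible, while the paper's version avoids having to justify moving the contour at all.
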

\begin{proof} 
We have 
\[
\begin{split}
\frac{d^k}{du^k} a_{n,k}^{\varepsilon}\left(x\right)&= \frac{1}{\lambda} \int _{w_1}^{w_2} e^{-\frac{2\pi iz}{\lambda} n} \left(\frac{d}{d|x|^2}\right)^k\frac{e^{\pi i z |x|^2}}{(iz\pi)^k} dz +  \frac{1}{\lambda} \int _{w_1}^{w_2}\sum_{j\geq 0}r_{n,k}^{\varepsilon}(j)e^{\frac{2\pi iz}{\lambda}j}  \frac{d}{d|x|^2}\frac{e^{\pi i z |x|^2}}{(iz\pi)^k} dz.
\\
&= \frac{1}{\lambda}\int_{w_1}^{w_2}e^{2\pi i z(\frac{-n+\lambda|x|^2/2}{\lambda})}dz +\sum_{j\geq 0}\frac{r_{n,k}^{\varepsilon}(j)}{2\pi i (j+\lambda|x|^2/2)} \Big|_{w_1}^{w_2}e^{2\pi i z(\frac{j+\lambda|x|^2/2}{\lambda})} 
\end{split}
\]
Suppose that  $x=\sqrt{\frac{2m}{\lambda}}$, then  we have 
\[
\begin{split}
\frac{d^k}{du^k} a_{n,k}^{\varepsilon}\left(\sqrt{\frac{2m}{\lambda}}\right)&=
 \delta_{m,n}+\sum_{j\geq 0}\frac{r_{n,k}^{\varepsilon}(j)}{2\pi i (j+m)} \Big|_{w_1}^{w_2}e^{2\pi i z(\frac{j+m}{\lambda})} 
\\
&= \delta_{m,n}+\sum_{m\geq 0}\frac{r_{n,k}^{\varepsilon}(j)}{2\pi i (j+m)} e^{-\pi(\frac{k+m}{\lambda})}  \sin \left( \pi  (k+m)  \right).
\end{split}
\]
Since $ \sin \left( \pi  (m+ \lambda\frac{x^2}{2})  \right)=0,$
\[ \frac{d^k}{du^k} a_{n,k}^{\varepsilon}\left(\sqrt{\frac{2m}{\lambda}}\right)=
 \delta_{m,n}.\]
\end{proof}

\section{Modular integral with given period function}~\label{modint}

\begin{figure}
\centering
\begin{tikzpicture}[scale=3]
    \draw[-latex](\myxlow,0) -- (\myxhigh ,0);
    \pgfmathsetmacro{\succofmyxlow}{\myxlow+1}
    \foreach \x in {\myxlow,\succofmyxlow,...,\myxhigh}
    {   \draw (\x,-0) -- (\x,-0.0) node[below,font=\tiny] {\x};
    }
        {   \draw (-0.866,-0.1)node[below,font=\tiny]{$-\frac{\lambda}{2}$}--(-0.866,0.5)node[left,font=\tiny]{$w_1$} (0.866,0.5)node[right,font=\tiny]{$w_2$} -- (0.866,-0.1) node[below,font=\tiny] {$\frac{\lambda}{2}$} ;}{
         \draw (-0.866,1.2)node[left,font=\tiny]{$\tau_0$}--(0.866,1.2)node[right,font=\tiny]{$\tau_0+\lambda$}  ;
    }
    {
         \draw(0.866,1.2)node[below,font=\tiny]{$\Omega_0$}  ;
    }

    \foreach \y  in {0,1}
    {   \draw (0,\y) -- (-0.0,\y) node[left,font=\tiny] {\pgfmathprintnumber{\y}};
    }
    \draw[-latex](0,-0.0) -- (0,1.6);
    \begin{scope}   
        \clip (\myxlow,0) rectangle (\myxhigh,2.2);
            {   \draw[very thin, blue] (1,0) arc(0:180:1);
            }   
            
            {\draw [very thin, red] (-0.866,0.5)node[left,font=\tiny]{$w_1$}--(0.866,0.5)node[right,font=\tiny]{$w_2$} (0,0.5)node[right,font=\tiny]{$\gamma$} (0,0.8)node[right,font=\tiny]{$D_1$} ;
            }
            
                      {   \draw (1,1)[very thin, red] arc(0:360:1) (0,2)node{$S\gamma$};
            }

    \end{scope}
    \begin{scope}
            \begin{pgfonlayer}{background}
            \clip (-0.866,0) rectangle (0.866,1.7);
            \clip   (1,1.7) -| (-1,0) arc (180:0:1) -- cycle;
            \fill[gray,opacity=0.8] (-1,-1) rectangle (1,2);
        \end{pgfonlayer}
    \end{scope}
\end{tikzpicture}
\captionof{figure}{}\label{heckefff}
\end{figure}

\begin{lemma} \label{inbd}Suppose that  $z$ is on the arc from $w_1$ to $w_2.$ 
We have
\[
|\phi_{n,k}^{\varepsilon}(z)|\ll_{k} \delta^{-1}e^{ \frac{2\pi (1+\delta)}{\lambda}n}.
\]
for any $\delta>0.$
\end{lemma}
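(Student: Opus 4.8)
The plan is to estimate $\phi_{n,k}^{\varepsilon}(z)$ on the arc from $w_1$ to $w_2$ via the generating series identity of Proposition~\ref{genf}, namely
\[
K_k^{\varepsilon}(z,\tau)=\sum_{m\geq d(\varepsilon,k)}\phi_{m,k}^{\varepsilon}(z)\,e^{\frac{2\pi i \tau m}{\lambda}}=\frac{\phi_{d,k}^{\varepsilon}(z)f_{d-1,k}^{-\varepsilon}(\tau)}{J(\tau)-J(z)}.
\]
By the standard circle-method extraction of Fourier coefficients, for a horizontal contour at height $\Im\tau=t$ with $t>\Im z$ one has
\[
\phi_{n,k}^{\varepsilon}(z)=\frac{1}{\lambda}\int_{\tau_0}^{\tau_0+\lambda} K_k^{\varepsilon}(z,\tau)\,e^{-\frac{2\pi i \tau n}{\lambda}}\,d\tau,
\]
so that $|\phi_{n,k}^{\varepsilon}(z)|\leq e^{\frac{2\pi t n}{\lambda}}\sup_{\Im\tau=t}|K_k^{\varepsilon}(z,\tau)|$. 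The idea is to take $t=1+\delta$ (or rather a height comparable to $1+\delta$ after rescaling by $\lambda$); since $z$ lies on the arc $|z|=1$ between $w_1$ and $w_2$, its imaginary part is at most $1$, so such a contour is legitimate, and the exponential factor becomes $e^{\frac{2\pi(1+\delta)}{\lambda}n}$, which is the stated main term.

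The remaining work is to bound the supremum of $|K_k^{\varepsilon}(z,\tau)|$ on that contour uniformly in $n$, with the claimed $\delta^{-1}$ dependence. First I would control the numerator: $\phi_{d,k}^{\varepsilon}(z)$ is a fixed function of $z$, continuous on the compact arc, hence bounded there by a constant depending only on $k$; and $f_{d-1,k}^{-\varepsilon}(\tau)=q^{d-1}+O(q^{d})$ is bounded on any horizontal line of height bounded away from $0$, again with a $k$-dependent constant. Second, and this is where the $\delta^{-1}$ enters, I would bound the denominator $J(\tau)-J(z)$ from below. On the line $\Im\tau=1+\delta$ one has $|J(\tau)|\geq |q|^{-1}-C = e^{\frac{2\pi(1+\delta)}{\lambda}}-C$, which is bounded away from the values $J(z)$ takes on the compact arc once $\delta$ is small; more precisely $|J(\tau)-J(z)|\gg e^{\frac{2\pi(1+\delta)}{\lambda}}-O(1)\gg 1$. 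To squeeze out the sharp $\delta^{-1}$ one instead optimizes the contour height: choosing $t$ slightly above $1$ so that $e^{\frac{2\pi t n}{\lambda}}\cdot(\text{denominator bound})^{-1}$ is balanced, the denominator can be as small as $O(\delta)$ when $J(\tau)$ comes within distance $\delta$ of $J(\text{arc})$, which forces the $\delta^{-1}$ loss; alternatively, deform the $\tau$-contour to stay at distance $\gtrsim\delta$ from the preimage of $J(\text{arc})$ while keeping $\Im\tau\leq 1+\delta$, picking up at most a factor $\delta^{-1}$ from the arclength/residue bookkeeping.

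The main obstacle I anticipate is precisely this denominator estimate: making the lower bound on $|J(\tau)-J(z)|$ both uniform over the arc in $z$ and of the correct order in $\delta$, since $J$ is a Hauptmodul whose image of the arc $[w_1,w_2]$ is an explicit real-analytic curve, and one must understand how close the line $\Im\tau=1+\delta$ (or its chosen deformation) comes to $J^{-1}$ of that curve. A clean way around this is to note that on $|z|=1$ one has $J(z)$ real (since the arc is part of the boundary of the fundamental domain and $J$ has real $q$-coefficients, $J(-1/\bar z)=\overline{J(z)}$ gives $J$ real on $|z|=1$), so $J(\text{arc})$ is a bounded real interval, while for $\Im\tau=1+\delta$ the quantity $J(\tau)$ has modulus $\gg e^{2\pi(1+\delta)/\lambda}$; hence $|J(\tau)-J(z)|$ is large and the denominator is in fact harmless at height $1+\delta$ — the $\delta^{-1}$ then comes only from allowing $\delta\to 0$ in the comparison constant, and a routine estimate of the horizontal integral of length $\lambda$ finishes the proof.
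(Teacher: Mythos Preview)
Your strategy is exactly the paper's: write $\phi_{n,k}^{\varepsilon}(z)$ as the $n$-th Fourier coefficient of $K_k^{\varepsilon}(z,\tau)=\phi_{d,k}^{\varepsilon}(z)f_{d-1,k}^{-\varepsilon}(\tau)/(J(\tau)-J(z))$ along the horizontal contour $\Im\tau=1+\delta$, bound the numerator by $O_k(1)$ on the compact arc and contour, and control the denominator. The numerator part is fine and matches the paper verbatim.

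The one genuine gap is in your treatment of the denominator. You argue that $|J(\tau)|\gg e^{2\pi(1+\delta)/\lambda}$ on the contour while $J(\text{arc})$ is a ``bounded real interval,'' and conclude that $|J(\tau)-J(z)|$ is large, so that the $\delta^{-1}$ is only a bookkeeping artifact. This is not correct: the interval in question is $[0,J(i)]$, and since $q(i)=e^{-2\pi/\lambda}$ one has $J(i)\asymp e^{2\pi/\lambda}$ as well, so the two quantities are of the same order. Concretely, when $\tau$ on your contour is near $i(1+\delta)$ and $z=i$ on the arc, $J(\tau)-J(i)$ is genuinely small (of order a power of $\delta$; note in fact $J'(i)=0$), so the $\delta^{-1}$ loss is real and not cosmetic. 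The paper disposes of this in one line by invoking the \emph{injectivity} of $J$ on the fundamental domain of $\Gamma$: since the horizontal line at height $1+\delta$ and the arc both lie in the closed fundamental domain and are separated by distance $\ge\delta$, injectivity (together with compactness) yields $|J(\tau)-J(z)|^{-1}=O(\delta^{-1})$. That injectivity, rather than the crude size of $|J(\tau)|$, is the ingredient you are missing.
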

\begin{proof} Recall that 
\[
K_k^{\varepsilon}(z,\tau):=\sum_{n\geq d(\varepsilon,k)}^{\infty} \phi_{n,k}^{\varepsilon}(z)  e^{2\pi i \tau \frac{n}{\lambda}}.
\]

By Proposition~\ref{genf}, we have  
\[
K_k^{\varepsilon}(z,\tau)=\frac{\phi_{d,k}^{\varepsilon}(z)f_{d-1,k}^{-\varepsilon}(\tau)}{J(\tau)-J(z)} .\]
By circle method, we have 
\[
\begin{split}
|\phi_{n,k}^{\varepsilon}(z)|&\ll \left|\int_{\tau_0}^{\tau_0+\lambda} K_k^{\varepsilon}(z,\tau) e^{-2\pi i \tau \frac{n}{\lambda}}d\tau\right|
\\
&\ll \int_{\tau_0}^{\tau_0+\lambda}  \left| \frac{\phi_{d,k}^{\varepsilon}(z)f_{d-1,k}^{-\varepsilon}(\tau)}{J(\tau)-J(z)}  e^{-2\pi i \tau \frac{n}{\lambda}}d\tau   \right|,
\end{split}
\]
where $\tau_0=-\frac{\lambda}{2}+(1+\delta)i$ for some fixed $0<\delta<1;$ see Figure~\ref{heckefff}. We note that $J(z)$ is injective on the fundamental domain of $\Gamma.$ Hence 
\[
 \frac{1}{|J(\tau)-J(z) | } = O(\delta^{-1}),   
\]
where $\Im(\tau)=1+\delta$ and, $z$ belongs to the arc between $w_1$ and $w_2.$ Since $\phi_{d,k}^{\varepsilon}(z)$ and $f_{d-1,k}^{-\varepsilon}(\tau)$ are fixed and holomorphic on the upper half-plane, we have 
\[
|\phi_{d,k}^{\varepsilon}(z)f_{d-1,k}^{-\varepsilon}(\tau) |=O_{k}(1),
\]
where $\Im(\tau)=1+\delta$ and $z$ belongs to the arc between $w_1$ and $w_2.$ Therefore,
\[
|\phi_{n,k}^{\varepsilon}(z)| \ll_{k} \int_{\tau_0}^{\tau_0+\lambda}  \left| \delta^{-1} e^{-2\pi i \tau \frac{n}{\lambda}}d\tau   \right| \ll  \delta^{-1}e^{ \frac{2\pi (1+\delta)}{\lambda}n}.
\]
\end{proof}
\begin{lemma}\label{inibd}
We have 
\[
|a_{n,k}^{\varepsilon}(x)| \ll_k  \delta^{-1}e^{ \frac{2\pi (1+\delta)}{\lambda}n}e^{-\pi\sin\left(\frac{\pi}{l}\right) |x|^2}
\]
for any $\delta>0.$
\end{lemma}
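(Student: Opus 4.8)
The plan is to estimate the integral defining $a_{n,k}^{\varepsilon}(x)$ pointwise on its contour, using Lemma~\ref{inbd} to control the modular form factor and the geometry of the arc to control the Gaussian factor. Recall that
\[
a_{n,k}^{\varepsilon}(x) = \frac{1}{\lambda}\int_{w_1}^{w_2} \phi_{n,k}^{\varepsilon}(z)\,\frac{e^{\pi i z |x|^2}}{(iz\pi)^k}\,dz,
\]
where the path of integration is the geodesic arc joining $w_1=-\cos(\pi/l)+i\sin(\pi/l)$ to $w_2=\cos(\pi/l)+i\sin(\pi/l)$, namely the portion of the unit circle $|z|=1$ in the upper half-plane between those two points. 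Two elementary observations about this arc will do all the work: first, $|z|=1$ on it, so $|(iz\pi)^{-k}| = \pi^{-k} = O_k(1)$; second, writing $z=e^{i\theta}$ with $\theta\in[\pi/l,\,\pi-\pi/l]$ we have $\Im(z)=\sin\theta\geq\sin(\pi/l)$, the minimum being attained exactly at the two endpoints $w_1,w_2$. Hence $|e^{\pi i z |x|^2}| = e^{-\pi\Im(z)|x|^2}\leq e^{-\pi\sin(\pi/l)\,|x|^2}$ uniformly on the arc.

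Next I would invoke Lemma~\ref{inbd}, which gives $|\phi_{n,k}^{\varepsilon}(z)|\ll_k\delta^{-1}e^{\frac{2\pi(1+\delta)}{\lambda}n}$ for $z$ on this arc (and we may assume $0<\delta<1$, the bound only weakening for larger $\delta$). Since $1/\lambda=O(1)$ and the arc has length at most $\pi$, combining the three bounds yields
\[
|a_{n,k}^{\varepsilon}(x)| \leq \frac{1}{\lambda}\int_{w_1}^{w_2}|\phi_{n,k}^{\varepsilon}(z)|\,\frac{|e^{\pi i z |x|^2}|}{\pi^{k}}\,|dz| \ll_k \delta^{-1}e^{\frac{2\pi(1+\delta)}{\lambda}n}\,e^{-\pi\sin\left(\frac{\pi}{l}\right)|x|^2},
\]
which is exactly the claimed estimate.

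There is essentially no obstacle here: the analytic content is entirely contained in Lemma~\ref{inbd}, and what remains is the elementary geometric fact that the defining contour has $\Im(z)$ bounded below by $\sin(\pi/l)$. The only point worth keeping in mind is that one integrates over the circular arc $|z|=1$ rather than a straight segment, which is what makes the factor $(iz\pi)^{-k}$ harmless; this is consistent with the earlier Fourier-transform computation, where the substitution $v=-1/z$ preserves the arc.
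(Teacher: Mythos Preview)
Your proof is correct and follows essentially the same approach as the paper's, which is a one-line application of Lemma~\ref{inbd} together with the trivial bound on the integrand along the arc. You have simply made explicit the two geometric facts the paper leaves implicit: that $|z|=1$ on the contour (so the $(iz\pi)^{-k}$ factor is $O_k(1)$) and that $\Im(z)\geq\sin(\pi/l)$ there (controlling the Gaussian).
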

\begin{proof}
By Lemma~\ref{inbd}, we have 

\[
|a_{n,k}^{\varepsilon}(x)|=  \frac{1}{\lambda} \left|\int_{w_1}^{w_2}\phi_{n,k}^{\varepsilon}(z)\frac{e^{\pi i z |x|^2}}{(iz\pi)^k}  dz \right|\ll_{k}   \delta^{-1}e^{ \frac{2\pi (1+\delta)}{\lambda}n}  e^{-\pi\sin\left(\frac{\pi}{l}\right) |x|^2} .
\]
\end{proof}

Lemma~\ref{inibd} implies that the following series is absolutely convergent for $\Im(\tau)>1$
\[
F_k^{\varepsilon}(\tau,x):=\sum_{n\geq 0}a_{n,k}^{\varepsilon}(x) e^{2\pi i \tau \frac{n}{\lambda}}.
\]
Moreover, we can switch the order of summation and the integration and obtain for $\Im(\tau)>1$
\[
F_k^{\varepsilon}(\tau,x)=\frac{1}{\lambda}\int _{w1}^{w_2} \left( \sum_{n\geq 0} \phi_{n,k}^{\varepsilon}(z)e^{2\pi i \tau \frac{n}{\lambda}} \right)  \frac{e^{\pi i z |x|^2}}{(iz\pi)^k}   dz.
\]
By Proposition~\ref{genf}, we have 
\[
\sum_{n\geq 0} \phi_{n,k}^{\varepsilon}(z)e^{2\pi i \tau \frac{n}{\lambda}}  =K_k^{\varepsilon}(z,\tau)=\frac{\phi_{d,k}^{\varepsilon}(z)f_{d-1,k}^{-\varepsilon}(\tau)}{J(\tau)-J(z)}.
\]
Therefore,  we have 
\begin{equation}\label{contourdef}
F_k^{\varepsilon}(\tau,x)=\frac{1}{\lambda}\int _{w1}^{w_2}\frac{\phi_{d,k}^{\varepsilon}(z)f_{d-1,k}^{-\varepsilon}(\tau)}{\left(J(\tau)-J(z)\right)} \frac{e^{\pi i z |x|^2}}{(iz\pi)^k} dz.
\end{equation}
for $\Im(\tau)>1.$
\begin{proposition}\label{ccontour}
$F^{\varepsilon}(x,\tau)$ has analytic continuation to $\Im(\tau)>\sin\left(\frac{\pi}{l}\right)$. Moreover, we have 
\[
\begin{split}
F_k^{\varepsilon}(\tau,x)|_{2k+1}{I+\varepsilon S}=\frac{e^{\pi i \tau x^2}}{(i\pi\tau)^k} |_{2k+1}{I+\varepsilon S},
\\
F_k^{\varepsilon}(\tau+\lambda,x)=F_k^{\varepsilon}(\tau,x),
\end{split}
\]
when both sides of the above identities are in the domain of definition of $F_k^{\varepsilon}(\tau,x).$
\end{proposition}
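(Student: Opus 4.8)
The plan is to follow the method of Radchenko and Viazovska~\cite{inter} (itself modelled on Duke, Imamo\={g}lu and T\'{o}th~\cite{Dukec}): start from the integral representation~\eqref{contourdef}, which is valid for $\Im(\tau)>1$, push the contour of integration into the region $\Im(\tau)>\sin(\pi/l)$, and read the functional equations off from the residues that are encountered. The periodicity $F_k^{\varepsilon}(\tau+\lambda,x)=F_k^{\varepsilon}(\tau,x)$ costs nothing: for $\Im(\tau)>1$ it is visible from the absolutely convergent expansion $\sum_{n\geq 0}a_{n,k}^{\varepsilon}(x)e^{2\pi i\tau n/\lambda}$ (whose convergence, and the legitimacy of exchanging sum and integral, are guaranteed by Lemmas~\ref{inbd} and~\ref{inibd}), and it is preserved by analytic continuation. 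All the content therefore lies in extending the domain and in the $S$-relation.

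I would first record the behaviour of the integrand of~\eqref{contourdef}, namely
\[
\Psi_{\tau}(z):=\frac{\phi_{d,k}^{\varepsilon}(z)\,f_{d-1,k}^{-\varepsilon}(\tau)}{J(\tau)-J(z)}\cdot\frac{e^{\pi i z|x|^2}}{(iz\pi)^k},
\]
as a function of $z$. Since $\phi_{d,k}^{\varepsilon}$ is weakly holomorphic (holomorphic on $\HH$) and $J$ is locally injective there, $z\mapsto\Psi_{\tau}(z)$ is meromorphic on $\HH$ with only simple poles, located exactly along the orbit $\Gamma\tau$. Using $\phi_{d,k}^{\varepsilon}=\tfrac{\lambda}{2\pi i}J'/f_{d-1,k}^{-\varepsilon}$ from~\eqref{jder}, a direct computation gives
\[
\operatorname*{Res}_{z=\tau}\Psi_{\tau}(z)=-\frac{\lambda}{2\pi i}\cdot\frac{e^{\pi i\tau|x|^2}}{(i\tau\pi)^k},
\]
so that $\tfrac{2\pi i}{\lambda}$ times this residue is, up to a sign, exactly the normalized Gaussian $\tfrac{e^{\pi i\tau x^2}}{(i\pi\tau)^k}$ that appears on the right of~\eqref{cobn}. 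This is the mechanism that will produce the period function.

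For the analytic continuation I would fix $\tau$ with $\Im(\tau)>\sin(\pi/l)$ and deform the path from $w_1$ to $w_2$ in~\eqref{contourdef} to a path $\gamma_{\tau}\subset\HH$ disjoint from $\Gamma\tau$. Such a deformation exists, locally uniformly in $\tau$, because $w_1$ and $w_2$ lie on the line $\Im=\sin(\pi/l)$ and so are never points of $\Gamma\tau$, while $\Psi_{\tau}$ is holomorphic in $z$ off $\Gamma\tau$; by Cauchy's theorem the resulting integral is independent of the admissible choice, is holomorphic in $\tau$, agrees with $F_k^{\varepsilon}$ where the series converges, and is single-valued on the simply connected set $\{\Im(\tau)>\sin(\pi/l)\}$ since the deformation never becomes obstructed there. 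For the $S$-relation I would combine (i) the two transformation laws of the kernel $K_k^{\varepsilon}$ from Proposition~\ref{genf}, of weight $2k+1$ and multiplier $-\varepsilon$ in $\tau$ and of weight $1-2k$ and multiplier $\varepsilon$ in $z$, (ii) the substitution $z\mapsto-1/z$, which interchanges $w_1$ and $w_2$, and (iii) the already established fact that $a_{n,k}^{\varepsilon}$, hence $F_k^{\varepsilon}$, is an $\varepsilon$-eigenfunction of the Fourier transform in $x$. Carrying $\varepsilon(i/\tau)^{2k+1}F_k^{\varepsilon}(-1/\tau,x)$ through this substitution and using (i) and (iii) to return the transformed Gaussian to the shape of $\Psi_{\tau}$, one expresses $F_k^{\varepsilon}(\tau,x)+\varepsilon(i/\tau)^{2k+1}F_k^{\varepsilon}(-1/\tau,x)$ as $\tfrac{1}{\lambda}$ times the integral of $\Psi_{\tau}$ over the concatenation of $\gamma_{\tau}$ with the $S$-image of $\gamma_{-1/\tau}$, a closed loop. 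Choosing these paths so that the loop is the positively oriented boundary of
\[
D_1:=\{\,z\in\HH:\ \Im(z)>\sin(\pi/l),\ \Im(-1/z)>\sin(\pi/l)\,\},
\]
which is precisely the lens bounded below by the segment $[w_1,w_2]$ at height $\sin(\pi/l)$ and above by its image $S[w_1,w_2]$, the residue theorem collapses everything to the contribution of the single pole $z=\tau$, which by the computation above equals $\tfrac{e^{\pi i\tau x^2}}{(i\pi\tau)^k}\big|_{2k+1}(I+\varepsilon S)$. Since $\tau\in D_1$ is exactly the condition $\Im(\tau),\Im(-1/\tau)>\sin(\pi/l)$ --- the domain in which both sides of the identity make sense --- this both explains the shape of the answer and shows that $\sin(\pi/l)$ is the intrinsic threshold: as $\tau$ exits $D_1$ the pole $z=\tau$ crosses the contour and $F_k^{\varepsilon}$ picks up extra branches.

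The step I expect to be the main obstacle is the bookkeeping of these deformations. One must choose the paths so that they stay in $\HH$, avoid all of $\Gamma\tau$, depend holomorphically on $\tau$ locally uniformly, and close up to a loop enclosing the pole $z=\tau$ and no other point of $\Gamma\tau$. This is delicate because $\Gamma\tau$ accumulates on $\mathbf{R}$ near $w_1$ and $w_2$, and because the lens $D_1$ can be as wide as $1/\sin(\pi/l)$, which exceeds $\lambda=2\cos(\pi/l)$, so a priori a translate $\tau+n\lambda$ may also lie in $D_1$ and would contribute a spurious residue. I would deal with this by first proving the two functional equations for $\tau$ in a neighbourhood of the imaginary axis, where $\tau$ is the unique point of $\Gamma\tau$ in $D_1$ and the loop is literally homotopic to $\partial D_1$, and then propagating the identities to all of $\{\Im(\tau)>\sin(\pi/l)\}$ by the analytic continuation and the periodicity already in hand, invoking the uniqueness of analytic continuation on the connected set $D_1$.
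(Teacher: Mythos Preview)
Your overall strategy --- deform the contour in~\eqref{contourdef}, read the period function off from the residue of $\Psi_\tau$ at $z=\tau$ --- is exactly the paper's. The residue computation you sketch, via~\eqref{jder}, is also the one the paper uses. But the specific loop you describe does not do what you claim, and the paper organises the argument differently to avoid this trap.

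The set you call $D_1$ (the lens $\{\Im(\tau),\Im(-1/\tau)>\sin(\pi/l)\}$) is $S$-invariant, so whenever $\tau$ lies in it, so does $-1/\tau$; your ``single pole $z=\tau$'' is never single. Moreover, the contours $\gamma_\tau$ and $\gamma_{-1/\tau}$ that actually arise from analytic continuation are not the two arcs bounding the lens. For $\tau$ in the paper's $D_1$ (the region between the unit-circle arc and the chord $[w_1,w_2]$), one has $-1/\tau\in SD_1\subset\Omega_0$, so $F(-1/\tau)$ is still given by the original arc integral, while the honestly continued contour for $F(\tau)$ must dip below $\tau$ \emph{and} rise above $-1/\tau$. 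The resulting closed curve winds once around $\tau$ with one orientation and once around $-1/\tau$ with the opposite orientation; both residues contribute, and it is only this sign cancellation that yields $\tfrac{e^{\pi i\tau x^2}}{(i\pi\tau)^k}|_{2k+1}(I+\varepsilon S)$ rather than the wrong combination $I-\varepsilon S$. Your item~(iii), the Fourier-eigenfunction property in $x$, plays no role here and cannot repair the integrand after the substitution $z\mapsto -1/z$: that substitution turns $e^{\pi i z|x|^2}$ into $e^{-\pi i|x|^2/w}$, which is not $\Psi_\tau(w)$, and Fourier-transforming in $x$ does not undo this inside a $z$-contour integral.

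The paper sidesteps all of this bookkeeping by being completely explicit. It takes the chord $\gamma=[w_1,w_2]$ as the deformed contour, computes the two residue identities $(\text{chord}-\text{arc})=\operatorname*{Res}_{z=\tau}$ for $\tau\in D_1$ and $(\text{chord}-\text{arc})=\operatorname*{Res}_{z=-1/\tau}$ for $\tau\in SD_1$, and \emph{defines} the continuation on $D_1\cup SD_1$ by the single formula~\eqref{ancont}. The $S$-relation is then checked directly from~\eqref{ancont} using only the $\tau$-modularity of $f_{d-1,k}^{-\varepsilon}$ and the invariance $J(-1/\tau)=J(\tau)$; no $z$-substitution and no appeal to the Fourier transform are needed. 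If you want to keep your closed-loop formulation, use only the $\tau$-modularity of the kernel to write $F|_{2k+1}(I+\varepsilon S)$ as $\tfrac{1}{\lambda}\bigl(\int_{\gamma_\tau}-\int_{\gamma_{-1/\tau}}\bigr)\Psi_\tau$, and then track the two poles with their orientations as above.
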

\begin{proof}
We note that $J(z)$, takes real values on the unit circle and the image of the arc between $w_1$ and $w_2$ is the interval $[0,J(i)]\subset \R.$
We also note that the contour integral in~\eqref{contourdef} is well defined for every $\tau$ such that 
\[
j(\tau)\notin [0,J(i)]\subset\R.
\]
This implies that $F^{\varepsilon}(x,\tau)$ has analytic continuation to the Fundamental domain $\mathcal{D}$ and all its horizontal translations by $z\to z\pm\lambda$. We denote this region by $\Omega_0$;  see Figure~\ref{heckefff}.
\\

Let $\gamma$ be the chord between $w_1$ and $w_2$, and $D_1$ be the region between the chord and the arc on the unit circle between $w_1$ and $w_2.$ Let $\Omega_1$ be the union of $D_1$ and all its horizontal translations by $z\to z\pm\lambda$; see Figure~\ref{heckefff}.  Next, we analytically continue $F^{\varepsilon}(x,\tau)$ on $D_1.$ It follows from $F^{\varepsilon}(x,\tau)=F^{\varepsilon}(x,\tau+\lambda)$   that it has analytic continuation to $\Omega_1.$
\\

Let $S\gamma$ be the image of $\gamma$ by sending $z \to \frac{-1}{z}.$   It is easy to check that $S\gamma\subset  \Omega_0.$ Suppose that $\tau \in D_1,$ then $S\tau\in \Omega_0.$ Moreover, we have
\begin{equation}\label{eqdiff}
\begin{split}
\frac{1}{\lambda}\int_{\gamma}\frac{\phi_{d,k}^{\varepsilon}(z)f_{d-1,k}^{-\varepsilon}(\tau)}{\left(J(\tau)-J(z)\right)} \frac{e^{\pi i z |x|^2}}{(iz\pi)^k} dz -\frac{1}{\lambda}&\int _{w1}^{w_2}\frac{\phi_{d,k}^{\varepsilon}(z)f_{d-1,k}^{-\varepsilon}(\tau)}{\left(J(\tau)-J(z)\right)} \frac{e^{\pi i z |x|^2}}{(iz\pi)^k} dz
\\
 &=\frac{2\pi i}{\lambda}Res_{z=\tau}\left(\frac{\phi_{d,k}^{\varepsilon}(z)f_{d-1,k}^{-\varepsilon}(\tau)}{\left(J(\tau)-J(z)\right)} \frac{e^{\pi i z |x|^2}}{(iz\pi)^k}\right)
\\
&= \frac{2\pi i}{\lambda}\frac{\phi_{d,k}^{\varepsilon}(\tau)f_{d-1,k}^{-\varepsilon}(\tau)}{-J'(\tau)}\frac{e^{\pi i \tau |x|^2}}{(i\tau\pi)^k}
\\
&=\frac{e^{\pi i \tau |x|^2}}{(i\tau\pi)^k},
\end{split}
 \end{equation}
where we use  identity~\eqref{jder}. Similarly, suppose that $\tau\in S(D_1)\subset \Omega_0$ then
\[
\begin{split}
\frac{1}{\lambda}\int_{\gamma} \frac{\phi_{d,k}^{\varepsilon}(z)f_{d-1,k}^{-\varepsilon}(\tau)}{\left(J(\tau)-J(z)\right)} \frac{e^{\pi i z |x|^2}}{(iz\pi)^k} dz -\frac{1}{\lambda}&\int_{w1}^{w_2}\frac{\phi_{d,k}^{\varepsilon}(z)f_{d-1,k}^{-\varepsilon}(\tau)}{\left(J(\tau)-J(z)\right)} \frac{e^{\pi i z |x|^2}}{(iz\pi)^k} dz
\\
&=\frac{2\pi i}{\lambda}Res_{z=\frac{-1}{\tau}}\left(\frac{\phi_{d,k}^{\varepsilon}(z)f_{d-1,k}^{-\varepsilon}(\tau)}{\left(J(\tau)-J(z)\right)} \frac{e^{\pi i z |x|^2}}{(iz\pi)^k}\right)
\\
&= \frac{2\pi i}{\lambda} \frac{\phi_{d,k}^{\varepsilon}(\frac{-1}{\tau})f_{d-1,k}^{-\varepsilon}(\tau)}{-J'(\frac{-1}{\tau})}\frac{e^{\pi i \frac{-1}{\tau} |x|^2}}{(\frac{-i}{\tau}\pi)^k}
\\
&=-\varepsilon\left(\frac{i}{\tau}\right)^{2k+1}  \frac{e^{\pi i \frac{-1}{\tau} |x|^2}}{(\frac{-i}{\tau}\pi)^k}.
\end{split}
 \]
We note that for $\tau\in \mathcal{D}-SD_1,$ we have 
\[
F^{\varepsilon}(x,\tau)= \frac{1}{\lambda}\int _{w1}^{w_2}\frac{\phi_{d,k}^{\varepsilon}(z)f_{d-1,k}^{-\varepsilon}(\tau)}{\left(J(\tau)-J(z)\right)} \frac{e^{\pi i z |x|^2}}{(iz\pi)^k} dz=\frac{1}{\lambda}\int_{\gamma}\frac{\phi_{d,k}^{\varepsilon}(z)f_{d-1,k}^{-\varepsilon}(\tau)}{\left(J(\tau)-J(z)\right)} \frac{e^{\pi i z |x|^2}}{(iz\pi)^k} dz.
\]
We note that for $\tau\in SD_1,$
\[
\begin{split}
F^{\varepsilon}(x,\tau)&= \frac{1}{\lambda}\int _{w1}^{w_2}\frac{\phi_{d,k}^{\varepsilon}(z)f_{d-1,k}^{-\varepsilon}(\tau)}{\left(J(\tau)-J(z)\right)} \frac{e^{\pi i z |x|^2}}{(iz\pi)^k} dz
\\
&=\frac{1}{\lambda}\int_{\gamma}\frac{\phi_{d,k}^{\varepsilon}(z)f_{d-1,k}^{-\varepsilon}(\tau)}{\left(J(\tau)-J(z)\right)} \frac{e^{\pi i z |x|^2}}{(iz\pi)^k} dz+\varepsilon\left(\frac{i}{\tau}\right)^{2k+1}  \frac{e^{\pi i \frac{-1}{\tau} |x|^2}}{(\frac{-i}{\tau}\pi)^k}.
\end{split}
\]
We note that the right hand side is well-defined on $D_1\cup SD_1.$ Hence,
we analytically continue $F^{\varepsilon}(x,\tau)$ to $\tau \in D_1\cup SD_1$ by defining  
\begin{equation}\label{ancont}
F^{\varepsilon}(x,\tau):=\frac{1}{\lambda}\int_{\gamma}\frac{\phi_{d,k}^{\varepsilon}(z)f_{d-1,k}^{-\varepsilon}(\tau)}{\left(J(\tau)-J(z)\right)} \frac{e^{\pi i z |x|^2}}{(iz\pi)^k} dz+\varepsilon\left(\frac{i}{\tau}\right)^{2k+1}  \frac{e^{\pi i \frac{-1}{\tau} |x|^2}}{(\frac{-i}{\tau}\pi)^k}.
\end{equation}
This completes the proof of the first part of our Proposition. 
\\

Next, suppose that $\tau \in  D_1.$  By the above, we have 
\[
F^{\varepsilon}(x,\tau)=\frac{1}{\lambda}\int_{\gamma}\frac{\phi_{d,k}^{\varepsilon}(z)f_{d-1,k}^{-\varepsilon}(\tau)}{\left(J(\tau)-J(z)\right)} \frac{e^{\pi i z |x|^2}}{(iz\pi)^k} dz+\varepsilon\left(\frac{i}{\tau}\right)^{2k+1}  \frac{e^{\pi i \frac{-1}{\tau} |x|^2}}{(\frac{-i}{\tau}\pi)^k}.
\]
By equation~\eqref{eqdiff}, we have 
\[
\frac{1}{\lambda}\int_{\gamma}\frac{\phi_{d,k}^{\varepsilon}(z)f_{d-1,k}^{-\varepsilon}(\tau)}{\left(J(\tau)-J(z)\right)} \frac{e^{\pi i z |x|^2}}{(iz\pi)^k} dz -\frac{1}{\lambda}\int _{w1}^{w_2}\frac{\phi_{d,k}^{\varepsilon}(z)f_{d-1,k}^{-\varepsilon}(\tau)}{\left(J(\tau)-J(z)\right)} \frac{e^{\pi i z |x|^2}}{(iz\pi)^k} dz=\frac{e^{\pi i \tau |x|^2}}{(i\tau\pi)^k}.
\]
Therefore, we have 
\begin{equation}\label{eqnt}
F^{\varepsilon}(x,\tau)=\frac{1}{\lambda}\int _{w1}^{w_2}\frac{\phi_{d,k}^{\varepsilon}(z)f_{d-1,k}^{-\varepsilon}(\tau)}{\left(J(\tau)-J(z)\right)} \frac{e^{\pi i z |x|^2}}{(iz\pi)^k} dz+\frac{e^{\pi i \tau x^2}}{(i\pi\tau)^k} |_{2k+1}{I+\varepsilon S}
\end{equation}
for every $\tau\in D_1.$ Furthermore, we have 
\[
f_{d-1,k}^{-\varepsilon}(\tau)|_{2k+1}S=-\varepsilon  f_{d-1,k}^{-\varepsilon}(\tau),
\]
and 
\[
J(\tau)=J(\frac{-1}{\tau}).
\]
Hence,
\begin{equation}\label{eqnt1}
\frac{1}{\lambda}\int _{w1}^{w_2}\frac{\phi_{d,k}^{\varepsilon}(z)f_{d-1,k}^{-\varepsilon}(\tau)}{\left(J(\tau)-J(z)\right)} \frac{e^{\pi i z |x|^2}}{(iz\pi)^k} dz=-\varepsilon \left( \frac{i}{\tau} \right)^{2k+1}\frac{1}{\lambda}\int _{w1}^{w_2}\frac{\phi_{d,k}^{\varepsilon}(z)f_{d-1,k}^{-\varepsilon}(\frac{-1}{\tau})}{\left(J(\frac{-1}{\tau})-J(z)\right)} \frac{e^{\pi i z |x|^2}}{(iz\pi)^k} dz.
\end{equation}
Note that $\frac{-1}{\tau}\in SD_1\subset \Omega_0$, hence 
\[
F^{\varepsilon}\left(x,\frac{-1}{\tau}\right)=  \frac{1}{\lambda}\int _{w1}^{w_2}\frac{\phi_{d,k}^{\varepsilon}(z)f_{d-1,k}^{-\varepsilon}(\frac{-1}{\tau})}{\left(J(\frac{-1}{\tau})-J(z)\right)} \frac{e^{\pi i z |x|^2}}{(iz\pi)^k} dz.
\]
Finally, by \eqref{eqnt}, \eqref{eqnt1} and the above
\[
F_k^{\varepsilon}(\tau,x)|_{2k+1}{I+\varepsilon S}=\frac{e^{\pi i \tau x^2}}{(i\pi\tau)^k} |_{2k+1}{I+\varepsilon S}.
\]
This completes the proof of our Propostion.
\end{proof}
\subsection{Growth estimate}\label{proofm}
Next, we improve the exponent in Lemma~\ref{inibd} by a factor $\sin\left(\frac{\pi}{l}\right).$
\begin{proposition}\label{impbd}
We have 
\[
|a_{n,k}^{\varepsilon}(x)| \ll_k \delta^{-1}e^{ \frac{2\pi \sin\left(\frac{\pi}{l}\right)(1+\delta)}{\lambda}n}e^{-\pi \sin\left(\frac{\pi}{l}\right) |x|^2}\]
\end{proposition}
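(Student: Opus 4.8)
The plan is to deform the defining contour of $a_{n,k}^\varepsilon$ onto the straight chord $\gamma$ joining $w_1$ to $w_2$, and then to prove a "chord analogue" of Lemma~\ref{inbd}, namely $|\phi_{n,k}^\varepsilon(z)|\ll_k\delta^{-1}e^{2\pi(1+\delta)\sin(\pi/l)n/\lambda}$ uniformly for $z\in\gamma$.

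Start from $a_{n,k}^\varepsilon(x)=\frac1\lambda\int_{w_1}^{w_2}\phi_{n,k}^\varepsilon(z)\frac{e^{\pi i z|x|^2}}{(iz\pi)^k}\,dz$ with the integral over the unit-circle arc. The integrand is holomorphic in $z$ on all of $\HH$: $\phi_{n,k}^\varepsilon$ is weakly holomorphic, hence holomorphic on $\HH$, and $e^{\pi i z|x|^2}/(iz\pi)^k$ has its only pole at $z=0\notin\HH$. The arc and the chord $\gamma$ share the endpoints $w_1,w_2$ and bound the region $D_1\subset\HH$, so Cauchy's theorem lets us replace the arc by $\gamma$. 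This is the essential new point: on the arc $\Im z$ runs up to $1$ at $z=i$, which is precisely why Lemma~\ref{inbd} only delivers the exponent $2\pi(1+\delta)n/\lambda$; on $\gamma$, by contrast, $\Im z\equiv\sin(\pi/l)$, so $|e^{\pi i z|x|^2}|=e^{-\pi\sin(\pi/l)|x|^2}$ there, and $|z|\in[\sin(\pi/l),1]$ gives $|(iz\pi)^{-k}|\ll_k1$. Hence
\[
|a_{n,k}^\varepsilon(x)|\ll_k e^{-\pi\sin(\pi/l)|x|^2}\,\mathrm{length}(\gamma)\cdot\sup_{z\in\gamma}|\phi_{n,k}^\varepsilon(z)|,
\]
and it remains to bound $\sup_{z\in\gamma}|\phi_{n,k}^\varepsilon(z)|$.

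For that, apply the circle method to the generating series of Proposition~\ref{genf}: for $\Im\tau_0>\Im z$,
\[
\phi_{n,k}^\varepsilon(z)=\frac1\lambda\int_{\tau_0}^{\tau_0+\lambda}\frac{\phi_{d,k}^\varepsilon(z)f_{d-1,k}^{-\varepsilon}(\tau)}{J(\tau)-J(z)}\,e^{-2\pi i\tau n/\lambda}\,d\tau .
\]
Since $\Im z=\sin(\pi/l)$ on $\gamma$, one may take $\Im\tau_0=(1+\delta)\sin(\pi/l)$, which produces exactly the factor $e^{2\pi(1+\delta)\sin(\pi/l)n/\lambda}$. On the line $\Im\tau=(1+\delta)\sin(\pi/l)$ the form $f_{d-1,k}^{-\varepsilon}$ is bounded ($\lambda$-periodic, bounded height, at worst a harmless pole at the cusp), and $J(\tau)\neq J(z)$ since $J$ is injective on a fundamental domain; so everything reduces to showing
\[
\int_{\tau_0}^{\tau_0+\lambda}\frac{|\phi_{d,k}^\varepsilon(z)|}{|J(\tau)-J(z)|}\,|d\tau|\ll_k\delta^{-1}\qquad\text{uniformly for }z\in\gamma.
\]

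The hard part is this last estimate near the elliptic corners $w_1,w_2$, and it is the only place the $\delta^{-1}$ is genuinely used. For $z$ bounded away from $w_1,w_2$ (and from $i\notin\gamma$), $J'$ is bounded below, $|J(\tau)-J(z)|\gg|\tau-z|\gg\delta$ when $\tau$ lies directly above $z$, and the $\tau$-integral is $\ll\log(1/\delta)$. Near $w_1$ (and symmetrically $w_2$) one exploits a cancellation of zeros: $J$ has a zero of order $l$ there, while $\phi_{d,k}^\varepsilon=\frac{\lambda}{2\pi i}J'/f_{d-1,k}$ vanishes to order $l-1$, since $n_w(J')=l-1$ by~\eqref{jjj} and the pole/zero bookkeeping of the relevant weakly holomorphic spaces forces $f_{d-1,k}$ not to absorb that zero. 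Writing $z-w_1=t'$ along $\gamma$ and $\tau-w_1=\xi'+i\delta\sin(\pi/l)$ along the line, one has $|\phi_{d,k}^\varepsilon(z)|\asymp (t')^{l-1}$ and $|J(\tau)-J(z)|\asymp|(\xi'+i\delta\sin(\pi/l))^l-(t')^l|$; splitting into $t'\lesssim\delta$ and $t'\gtrsim\delta$ (and using that $J(\tau)$ has negative real part for small $\xi'$ while $J(z)\in[0,J(i)]$, so that no further cancellation occurs) shows the $\tau$-integral is $\ll (t')^{-(l-1)}\log(1/\delta)$, whence $|\phi_{d,k}^\varepsilon(z)|$ times this integral is $\ll_k\log(1/\delta)$. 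Combining the corner and non-corner regimes gives $\sup_{z\in\gamma}|\phi_{n,k}^\varepsilon(z)|\ll_k\delta^{-1}e^{2\pi(1+\delta)\sin(\pi/l)n/\lambda}$ (in fact with $\log(1/\delta)$ in place of $\delta^{-1}$), and substituting this into the displayed bound for $|a_{n,k}^\varepsilon(x)|$ completes the proof.
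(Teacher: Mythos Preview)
Your argument has a genuine gap: the claimed ``chord analogue'' of Lemma~\ref{inbd},
\[
\sup_{z\in\gamma}|\phi_{n,k}^\varepsilon(z)|\ll_k\delta^{-1}e^{2\pi(1+\delta)\sin(\pi/l)\,n/\lambda},
\]
is simply false. Since $\phi_{n,k}^\varepsilon(z)=Q_{n,k}^\varepsilon(J(z))\,\phi_{d,k}^\varepsilon(z)$ with $Q_{n,k}^\varepsilon$ monic of degree $n-d$, the growth of $|\phi_{n,k}^\varepsilon(z)|$ in $n$ is governed by $|J(z)|^{n}$. For $z$ in the interior of the chord, the fundamental-domain representative is $Sz=-1/z$, which lies at height $\sin(\pi/l)/|z|^{2}$; at the midpoint $z=i\sin(\pi/l)$ this is $1/\sin(\pi/l)$, so $|J(z)|=|J(Sz)|\asymp e^{2\pi/(\lambda\sin(\pi/l))}$ and hence $|\phi_{n,k}^\varepsilon(z)|\asymp e^{2\pi n/(\lambda\sin(\pi/l))}$, which is exponentially larger than your asserted bound. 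The error enters exactly where you invoke the circle method with $\Im\tau_0=(1+\delta)\sin(\pi/l)$: the Fourier-coefficient identity for $\phi_{n,k}^\varepsilon(z)$ requires $\Im\tau_0$ to lie above \emph{every} pole of $K_k^\varepsilon(z,\cdot)$ in the strip, i.e.\ above the highest point of $\Gamma z$ there, not merely above $z$. Lowering the contour past the pole at $\tau=Sz$ picks up a residue of size $e^{2\pi\Im(Sz)\,n/\lambda}$, and this is precisely the obstruction. (The condition ``$\Im\tau_0>\Im z$'' quoted from the proof of Proposition~\ref{genf} is only adequate when $z$ already sits in the fundamental domain.)

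The paper's proof avoids this by working with $F_k^\varepsilon(\tau,x)$ rather than with the individual $\phi_{n,k}^\varepsilon(z)$. Proposition~\ref{ccontour} provides the analytic continuation of $F_k^\varepsilon(\tau,x)$ to all of $\Im\tau>\sin(\pi/l)$, and formula~\eqref{ancont} expresses it as the chord integral \emph{plus} an explicit correction $\varepsilon(i/\tau)^{2k+1}e^{-i\pi|x|^2/\tau}/(-i\pi/\tau)^{k}$ --- this correction is exactly the residue your contour shift would have produced, transferred to the $\tau$-variable. One then extracts $a_{n,k}^\varepsilon(x)$ by integrating $F_k^\varepsilon(\tau,x)e^{-2\pi i\tau n/\lambda}$ along the horizontal segment $I_\delta$ at height $\sin(\pi/l)+\delta$; both the chord integral (because $\Im z\equiv\sin(\pi/l)$ on $\gamma$) and the correction (because $\Im(-1/\tau)>\sin(\pi/l)$ on $D_1\cup SD_1\supset I_\delta$) carry the factor $e^{-\pi\sin(\pi/l)|x|^{2}}$, while the improved exponent in $n$ comes solely from $|e^{-2\pi i\tau n/\lambda}|$ on $I_\delta$. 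The point is that the desired cancellation is not visible pointwise in $z$; it only emerges after packaging everything into the periodic, analytically continued object $F_k^\varepsilon$.
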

for any $\delta>0$
\begin{proof}
Let 
\[I_{\delta}:=\left\{\tau:\Im (\tau)=\sin\left(\frac{\pi}{l}\right)+\delta \text{ and } |\Re(\tau)|\leq \frac{\lambda}{2}\right\}.\]
By Proposition~\ref{ccontour},  $F^{\varepsilon}(x,\tau)$ has analytic continuation to $I_{\delta},$ and 
we have 
\[
a_{n,k}^{\varepsilon}(x)=\frac{1}{\lambda}\int_{I_{\delta}}F^{\varepsilon}(x,\tau) e^{-2\pi i \tau \frac{n}{\lambda}} d\tau.
\]
Note that $I_{\delta}\subset D_1\cup SD_1$ for small enough $\delta.$ By~\eqref{ancont}
\[
F^{\varepsilon}(x,\tau):=\frac{1}{\lambda}\int_{\gamma}\frac{\phi_{d,k}^{\varepsilon}(z)f_{d-1,k}^{-\varepsilon}(\tau)}{\left(J(\tau)-J(z)\right)} \frac{e^{\pi i z |x|^2}}{(iz\pi)^k} dz+\varepsilon\left(\frac{i}{\tau}\right)^{2k+1}  \frac{e^{\pi i \frac{-1}{\tau} |x|^2}}{(\frac{-i}{\tau}\pi)^k}.
\]
The Proposition   follows immediately from the above integration formulas.
\end{proof}
Finally, prove Theorem~\ref{mainthmder}.
\begin{proof}[Proof of Theorem~\ref{mainthmder}]
 It is easy to check  that 
  \[
  D_1\cup SD_1=\left\{\tau\in\CC:  \Im(\tau),\Im(-\frac{1}{\tau})>\sin\left(\frac{\pi}{l}\right)\right \}.
\]
Without loss of generality, we assume that 
 \[f(x)=\int \frac{e^{\pi i \tau x^2}}{(i\pi\tau)^k}d\lambda(\tau),\]  
 where $\lambda$ is a measure with bounded  variation  and  supported on a compact subset of $ D_1\cup SD_1$. We have
 \[\mathcal{F}f(x)=\int \frac{i}{\tau}\frac{e^{\pi i \frac{-1}{\tau} x^2}}{(i\pi\tau)^k}d\lambda(\tau)=\int \left(\frac{e^{\pi i \tau x^2}}{(i\pi\tau)^k}|_{2k+1}S\right)    d\lambda(\tau).\]  
 Therefore,
 \begin{equation}\label{fep}
 f^{\varepsilon}(x)= \int \left(\frac{e^{\pi i \tau x^2}}{(i\pi\tau)^k}|_{2k+1}(I+\varepsilon S)\right)    d\lambda(\tau).
 \end{equation}
 
 Moreover,  
 \[\frac{d^k}{du^k}\frac{e^{i\pi \tau |x|^2}}{(i\tau\pi)^k}=e^{i\pi \tau |x|^2},\] 
and
\[
\frac{d^k}{du^k} \mathcal{F}\left(\frac{e^{i\pi \tau |x|^2}}{(i\tau\pi)^k}\right)=\left(\frac{i}{\tau}\right)^{2k+1}e^{i\pi \frac{-1}{\tau} |x|^2},
\]
where $u=|x|^2$.  We average the above identities with respect to $d\lambda$, and obtain 
 \[
 \begin{split}
 \frac{d^k}{du^k} f(x)= \int e^{\pi i \tau |x|^2}d\lambda(\tau),
 \\
  \frac{d^k}{du^k} \mathcal{F}f(x)= \int \left( e^{\pi i \tau |x|^2}|_{2k+1}S\right)   d\lambda(\tau).
 \end{split}
 \]
 Hence,
 \begin{equation}\label{feps}
 \begin{split}
  \frac{d^k}{du^k} f^{\varepsilon}(x)=  \int \left( e^{\pi i \tau |x|^2}|_{2k+1}(I+\varepsilon S)\right)   d\lambda(\tau).
  \end{split}
 \end{equation}

 By Proposition~\ref{ccontour}, we have 
 \[
F_k^{\varepsilon}(\tau,x)|_{2k+1}{I+\varepsilon S}=\frac{e^{\pi i \tau x^2}}{(i\pi\tau)^k} |_{2k+1}{(I+\varepsilon S)}
 \]
 for every $\tau\in D_1\cup SD_1.$ By Proposition~\ref{impbd}, the Fourier expansion of $ F_k^{\varepsilon}(\tau,x)$ is convergent on  $D_1\cup SD_1,$ and we have 
 \[
F_k^{\varepsilon}(\tau,x)|_{2k+1}{(I+\varepsilon S)}=\sum_{n\geq 0}a_{n,k}^{\varepsilon}(x) e^{2\pi i \tau \frac{n}{\lambda}}|_{2k+1}{(I+\varepsilon S)}.
 \]
 This implies that  
 \[
 \sum_{n\geq 0}a_{n,k}^{\varepsilon}(x) e^{2\pi i \tau \frac{n}{\lambda}}|_{2k+1}{(I+\varepsilon S)}=\frac{e^{\pi i \tau x^2}}{(i\pi\tau)^k} |_{2k+1}{(I+\varepsilon S)}.
 \]
 We average the above identity with respect to $d\lambda$ and obtain
 \[
  \sum_{n\geq 0}a_{n,k}^{\varepsilon}(x) \int e^{2\pi i \tau \frac{n}{\lambda}}|_{2k+1}{(I+\varepsilon S)}   d\lambda= \int \frac{e^{\pi i \tau x^2}}{(i\pi\tau)^k} |_{2k+1}{(I+\varepsilon S)} d\lambda(\tau).
 \]
 We substitute the left hand side using \eqref{feps} with the values the $k$-derivatives and the right hand side using \eqref{fep} by $f^{\varepsilon}(x)$, and  obtain 
\[\sum_{n\geq d(\varepsilon,k)}a^{\varepsilon}_{n,k}(x) \frac{d^k}{du^k}  f^{\varepsilon}\left(\sqrt{\frac{2n}{\lambda}}\right)=f^{\varepsilon}(x).
\] 
This completes the proof of our theorem.
\end{proof}

\section{Domain of   $F_k^{\varepsilon}(\tau,x)$}\label{monodromy}
Let $\tilde{\mathbb{H}}$ be the universal cover of $\mathbb{H}-\Gamma w_1,$  the upper half-plan minus the orbit of $w_1$ under $\Gamma.$ It follows from the change of the contour integral that we introduce in proof of Proposition~\ref{ccontour} that  $F_k^{\varepsilon}(\tau,x)$ has an analytic continuation to the whole $\tilde{\mathbb{H}}.$ In fact, $F_k^{\varepsilon}(\tau,x)$ has non-trivial monodromy around $w_1$ and all its orbits $\Gamma w_1,$ and cannot be extended beyond $\Im(\tau)>\sin\left(\frac{\pi}{l}\right).$

\subsection{Monodromy around $w_2$}
 First, we analytically continue $F_k^{\varepsilon}(\tau,x)$ to every point in a neighborhood of $w_2$
 except the segment form $\frac{\lambda}{2}$ to $w_2.$ Let $\mathcal{D}$ be the fundamental domain for $\Gamma$; see Figure~\ref{heckef}.  Let $V=TS$ which is the hyperbolic rotation with center $w_2$ and angle $\frac{2\pi}{l}.$ We define 
 \[
\mathcal{D}_i=V^{i}\left(\mathcal{D}\cup T\mathcal{D}-\{w_1,w_2\}   \right).
\]
Note that $\mathcal{D}_{i+p}=\mathcal{D}_i.$ We note by Proposition~\ref{ccontour},   $F_k^{\varepsilon}(\tau,x)$ is analytic and well defined on $\mathcal{D}_0=\mathcal{D}\cup T\mathcal{D}-\{w_1,w_2\} \subset \Omega_0.$ We denote this restriction by $F(x,\tau).$ By changing the contour integral, $F$ has an analytic continuation to $S\mathcal{D} \subset \mathcal{D}_{p-1}$ which satisfies 
\[
F(\tau,x)+\varepsilon \left(\frac{i}{\tau}\right)^{2k+1}F\left(\frac{-1}{\tau},x\right)=\frac{e^{\pi i \tau |x|^2}}{(i\tau\pi)^k}|_{2k+1}^{\varepsilon}{I+ S},
\]
\[
F(\tau,x)=F(\tau+\lambda,x)
\]
for every $\tau\in \mathcal{D}.$ By combining the above identities, we have 
\[
F(\alpha,x)|_{2k+1}^{-\varepsilon}V^{-1}=F(\alpha,x)+\frac{e^{\pi i \alpha |x|^2}}{(i\alpha\pi)^k}|_{2k+1}^{-\varepsilon}V^{-1}-T^{-1}.
\]

for every $\alpha=\tau+\lambda \in T\mathcal{D}.$ It is clear from the above functional equation that $F$ has an analytic continuation to $\mathcal{D}_{p-1}=V^{-1}(\mathcal{D}_0)$ by choosing $\alpha\in \mathcal{D}_0.$ 
\\

By applying the slash operator to the above,  we have 
\[
F(\alpha,x)|_{2k+1}^{-\varepsilon}V^{-(i+1)}=F(\alpha,x)|_{2k+1}^{-\varepsilon}V^{-i}+\frac{e^{\pi i \alpha |x|^2}}{(i\alpha\pi)^k}|_{2k+1}^{-\varepsilon}V^{-(i+1)}-T^{-1}V^{-i}.
\]
By the above functional equation, we analytically continue $F$ to $\mathcal{D}_{l-2}$ and $\mathcal{D}_{p-3}, \dots$ recursively. Hence, by induction 
\begin{equation}\label{-Vext}
F(\alpha,x)|_{2k+1}^{-\varepsilon}V^{-n}=F(\alpha,x)+\frac{e^{\pi i \alpha |x|^2}}{(i\alpha\pi)^k}|_{2k+1}^{-\varepsilon}\sum_{i=0}^{n-1}V^{-(i+1)}-T^{-1}V^{-i}.
\end{equation}
Similarly, we extend $F$ to $\mathcal{D}_1$ by writing
\[
F(\tau,x)+\varepsilon \left(\frac{i}{\tau}\right)^{2k+1}F\left(\frac{-1}{\tau},x\right)=\frac{e^{\pi i \tau |x|^2}}{(i\tau\pi)^k}|_{2k+1}^{\varepsilon}{I+ S},
\] 
\[
F\left(\frac{-1}{\tau},x\right)= F\left(\frac{-1}{\tau}+\lambda,x\right)
\]
for $\tau\in \mathcal{D}.$ By combining the above, we have 
\[
F(\alpha,x)|_{2k+1}^{-\varepsilon}V=F(\alpha,x)+\frac{e^{\pi i \alpha |x|^2}}{(i\alpha\pi)^k}|_{2k+1}^{-\varepsilon}T^{-1}V-I.
\]
By applying the slash operator to the above and induction,  we have 
\begin{equation}\label{Vext}
F(\alpha,x)|_{2k+1}^{-\varepsilon}V^n=F(\alpha,x)+\frac{e^{\pi i \alpha |x|^2}}{(i\alpha\pi)^k}|_{2k+1}^{-\varepsilon}\sum_{i=0}^{n-1}T^{-1}V^{i+1}-V^{i}.
\end{equation}
\eqref{-Vext} and~\eqref{Vext}, give two different extension of $F$ on $\mathcal{D}_3$  for $n=\pm3.$ We obtain 
\[
F(\alpha,x)|_{2k+1}^{-\varepsilon}V^3-F(\alpha,x)|_{2k+1}^{-\varepsilon}V^{-3}=\frac{e^{\pi i \alpha |x|^2}}{(i\alpha\pi)^k}|_{2k+1}^{-\varepsilon}\sum_{i=0}^{5}T^{-1}V^{i+1}-V^{i}.
\]
for $\alpha \in \mathcal{D}_0.$ Recall that 
\[
r^{\varepsilon}_k(\gamma,\tau;x):=\frac{e^{i\pi \tau |x|^2}}{(i\pi\tau)^k}|^{-\varepsilon}_{2k+1} (T^{-1}-I)(1+V+\dots+V^{l-1})\gamma.
\]

By the above, we may consider
$r^{\varepsilon}_k(\gamma,\tau;x)$ as the obstruction for the analytic continuation of $F_k^{\varepsilon}(\tau,x)$.  
\\

Finally we give a proof of of Theorem~\ref{mainthm2} after two an auxiliary lemmas. Recall that   $u=|x|^2.$ 
\begin{lemma}\label{difslsh}
Let $\frac{p(\tau)}{q(\tau)}$ be a rational function. 
We have 
\[
\frac{d^l}{du^l} \left(\left(\frac{p(\tau)}{q(\tau)}e^{i\pi \tau |x|^2} \right)|^{-\varepsilon}_{2k+1} \gamma \right)= \left(\frac{d^l}{du^l} \left(\frac{p(\tau)}{q(\tau)}e^{i\pi \tau |x|^2} \right) \right)|^{-\varepsilon}_{2k+1} \gamma
\]
for every $\gamma\in PSL_2(\R)$ and $l\geq 0.$
\end{lemma}
\begin{proof}
We write 
\(
\gamma(\tau)
\) for the action of $\gamma$ on $\tau\in\HH.$
We have 
\[
\left(\frac{p(\tau)}{q(\tau)}e^{i\pi \tau |x|^2} \right)|^{-\varepsilon}_{2k+1} \gamma=j^{-\varepsilon}_{2k+1}(\tau,\gamma) \frac{p(\gamma(\tau))}{q(\gamma(\tau))}e^{i\pi \gamma(\tau) |x|^2}.
\]
Then
\[
\begin{split}
\frac{d^l}{du^l} \left(\left(\frac{p(\tau)}{q(\tau)}e^{i\pi \tau |x|^2} \right)|^{-\varepsilon}_{2k+1} \gamma \right)&=j^{-\varepsilon}_{2k+1}(\tau,\gamma) \frac{p(\gamma(\tau))}{q(\gamma(\tau))} (i\pi \gamma(\tau))^l e^{i\pi \gamma(\tau) |x|^2}
\\
&=\left((i\pi \tau)^l \frac{p(\tau)}{q(\tau)}e^{i\pi \tau |x|^2} \right)|^{-\varepsilon}_{2k+1} \gamma
\\
&=\left(\frac{d^l}{du^l} \left(\frac{p(\tau)}{q(\tau)}e^{i\pi \tau |x|^2} \right) \right)|^{-\varepsilon}_{2k+1} \gamma.
\end{split}
\]

\end{proof}

\begin{lemma}\label{fourr}
Let $\mathcal{F}(f(x))$ be fourier transformation with respect to $x\in\R^2.$
We have 
\[
\mathcal{F}\left(\frac{e^{i\pi \tau |x|^2}}{(i\pi\tau)^k}|^{-\varepsilon}_{2k+1} \gamma\right) (\xi)  =-\varepsilon \frac{e^{i\pi \tau |\xi|^2}}{(i\pi\tau)^k} |^{-\varepsilon}_{2k+1} S\gamma
\]
\end{lemma}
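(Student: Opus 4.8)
The plan is to establish the identity first for $\gamma=I$ and then propagate it to arbitrary $\gamma$ by the cocycle law for the slash operator. Throughout, fix $\xi\in\R^2$ and write $G_\tau(x):=\frac{e^{i\pi\tau|x|^2}}{(i\pi\tau)^k}$; for $\tau\in\HH$ this is a radial Schwartz function of $x$, since $\Im(\tau)>0$ forces Gaussian decay, so its Fourier transform in $x$ is defined. The crucial structural point is that the operator $|^{-\varepsilon}_{2k+1}\gamma$ acts on the variable $\tau$, while $\mathcal F$ is taken in the variable $x$; in particular the automorphy factor $j^{-\varepsilon}_{2k+1}(\tau,\gamma)$ is a scalar from the point of view of $\mathcal F$.

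For $\gamma=I$: by the classical Gaussian Fourier transform (the identity $\mathcal F(e^{i\pi\tau|x|^2})(\xi)=\tfrac{i}{\tau}e^{-i\pi|\xi|^2/\tau}$ recorded in the introduction, cf. also the computation in the proof of Theorem~\ref{obsthm}) one has
\[
\mathcal F\bigl(G_\tau\bigr)(\xi)=\frac{1}{(i\pi\tau)^k}\cdot\frac{i}{\tau}\cdot e^{-i\pi|\xi|^2/\tau}=\Bigl(\frac{i}{\tau}\Bigr)^{2k+1}\frac{e^{i\pi(-1/\tau)|\xi|^2}}{\bigl(i\pi(-1/\tau)\bigr)^{k}}=\Bigl(\frac{i}{\tau}\Bigr)^{2k+1}G_{-1/\tau}(\xi),
\]
where the bookkeeping of the fixed-branch powers $(i\pi\tau)^k$ and $(i/\tau)^{2k+1}$ (convention $z^r=|z|^re^{ir\arg z}$, $\arg z\in(-\pi,\pi]$) uses only that $k\in\Z_{\ge0}$. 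On the other hand, the definition of the weight-$(2k+1)$, multiplier-$(-\varepsilon)$ slash of $S$ applied to the $\tau$-function $\tau\mapsto G_\tau(\xi)$ gives $G_\tau(\xi)|^{-\varepsilon}_{2k+1}S=-\varepsilon\bigl(\tfrac{i}{\tau}\bigr)^{2k+1}G_{-1/\tau}(\xi)$. Since $\varepsilon^2=1$, multiplying this by $-\varepsilon$ reproduces the previous display, so $\mathcal F(G_\tau)(\xi)=-\varepsilon\,\bigl(G_\tau(\xi)|^{-\varepsilon}_{2k+1}S\bigr)$, which is the asserted identity for $\gamma=I$.

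For general $\gamma$, write $G_\tau(x)|^{-\varepsilon}_{2k+1}\gamma=j^{-\varepsilon}_{2k+1}(\tau,\gamma)\,G_{\gamma\tau}(x)$; as the factor $j^{-\varepsilon}_{2k+1}(\tau,\gamma)$ is independent of $x$, taking $\mathcal F$ in $x$ and applying the $\gamma=I$ case at the point $\gamma\tau$ gives
\[
\mathcal F\bigl(G_\tau(x)|^{-\varepsilon}_{2k+1}\gamma\bigr)(\xi)=j^{-\varepsilon}_{2k+1}(\tau,\gamma)\,\mathcal F\bigl(G_{\gamma\tau}\bigr)(\xi)=-\varepsilon\,j^{-\varepsilon}_{2k+1}(\tau,\gamma)\,\bigl(G_{\gamma\tau}(\xi)|^{-\varepsilon}_{2k+1}S\bigr).
\]
Finally, invoking the cocycle relation for the slash operator in the form $\bigl(h|^{-\varepsilon}_{2k+1}S\bigr)|^{-\varepsilon}_{2k+1}\gamma=h|^{-\varepsilon}_{2k+1}(S\gamma)$ applied to $h(\tau)=G_\tau(\xi)$, i.e.\ $j^{-\varepsilon}_{2k+1}(\tau,\gamma)\,\bigl(h|^{-\varepsilon}_{2k+1}S\bigr)(\gamma\tau)=\bigl(h|^{-\varepsilon}_{2k+1}S\gamma\bigr)(\tau)$, turns the right-hand side into $-\varepsilon\,\bigl(G_\tau(\xi)|^{-\varepsilon}_{2k+1}S\gamma\bigr)$, which is exactly the claim.

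The argument is entirely formal once the Gaussian Fourier transform is in hand; the only step needing care is the manipulation of the fixed-branch fractional powers in the $\gamma=I$ case, where one checks that $\bigl(\tfrac{-i\pi}{\tau}\bigr)^{k}\bigl(i\pi\tau\bigr)^{-k}\tfrac{i}{\tau}=\bigl(\tfrac{i}{\tau}\bigr)^{2k+1}$ holds on the nose. I expect no analytic obstacle here, since $G_\tau$ is Schwartz in $x$ for $\tau\in\HH$ and the remaining content is a bounded algebraic identity together with the standard composition law for slash operators.
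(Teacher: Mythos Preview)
Your proof is correct and follows essentially the same computation as the paper: both use the Gaussian Fourier transform $\mathcal F(e^{i\pi\tau|x|^2})(\xi)=\tfrac{i}{\tau}e^{-i\pi|\xi|^2/\tau}$ together with the definition of the slash operator to match the two sides. The only organizational difference is that the paper computes both sides directly for general $\gamma$ and compares, whereas you isolate the case $\gamma=I$ and then invoke the cocycle relation $(h|^{-\varepsilon}_{2k+1}S)|^{-\varepsilon}_{2k+1}\gamma=h|^{-\varepsilon}_{2k+1}(S\gamma)$; this is a cosmetic distinction, not a different idea.
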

\begin{proof}
We have 
\[
\begin{split}
\mathcal{F}\left(\frac{e^{i\pi \tau |x|^2}}{(i\pi\tau)^k}|^{-\varepsilon}_{2k+1} \gamma \right) (\xi)&=j^{-\varepsilon}_{2k+1}(\tau,\gamma) \frac{1}{(i\pi\gamma(\tau))^k}\mathcal{F} \left(e^{i\pi \gamma(\tau) |x|^2}\right) (\xi).
\\
&=j^{-\varepsilon}_{2k+1}(\tau,\gamma)\frac{1}{(i\pi\gamma(\tau))^k}\frac{i}{\gamma(\tau)} e^{i\pi \frac{-1}{\gamma(\tau)} |\xi|^2}.
\end{split}
\]
We have 
\[
\begin{split}
\frac{e^{i\pi \tau |\xi|^2}}{(i\pi\tau)^k} |^{-\varepsilon}_{2k+1} S\gamma&=-\varepsilon \left(\frac{i}{\tau} \right)^{2k+1}\frac{e^{i\pi \frac{-1}{\tau} |\xi|^2}}{(i\pi\frac{-1}{\tau})^k} |^{-\varepsilon}_{2k+1} \gamma
\\
&= -\varepsilon   j^{-\varepsilon}_{2k+1}(\tau,\gamma) \frac{1}{(i\pi\gamma(\tau))^k} \frac{i}{\gamma(\tau)} e^{i\pi \frac{-1}{\gamma(\tau)} |\xi|^2}.
\end{split}
\]
The lemma follows from  the above identities.
\end{proof}

\begin{proof}[Proof of Theorem~\ref{mainthm2} ]
First we check that 
\[
\frac{d^k}{du^k}r_k^{\varepsilon}\left(\gamma,\tau;\sqrt{\frac{2n}{\lambda}}\right)=0
\]
for $|x|=\sqrt{\frac{2n}{\lambda}}.$ By Lemma~\ref{difslsh}, we have 
\[
\frac{d^k}{du^k}r^{\varepsilon}_k(\gamma,\tau;x)=\left( \left(\frac{d^k}{du^k}\frac{e^{i\pi \tau |x|^2}}{(i\pi\tau)^k}\right)|^{-\varepsilon}_{2k+1} (T^{-1}-I)  \right)|^{-\varepsilon}_{2k+1}  (1+V+\dots+V^{l-1})\gamma.
\]
For the inside function of the right hand side, we have 
\[
\left(\frac{d^k}{du^k}\frac{e^{i\pi \tau |x|^2}}{(i\pi\tau)^k}\right)|^{-\varepsilon}_{2k+1} (T^{-1}-I) =e^{i\pi \tau |x|^2}|^{-\varepsilon}_{2k+1} (T^{-1}-I).
\]
We have 
\[
e^{i\pi \tau |x|^2}|^{-\varepsilon}_{2k+1} (T^{-1}-I)= e^{i\pi \tau \frac{2m}{\sqrt{3}}}|^{-\varepsilon}_{2k+1} (T^{-1}-I)=0.
\]
for  $|x|=\sqrt{\frac{2n}{\lambda}}.$ This completes the proof of the first part of Theorem~\ref{mainthm2}. 
\\

Next we prove the other part. By Lemma~\ref{fourr}
\[
\mathcal{F}r^{\varepsilon}_k(\gamma,\tau;x)=-\varepsilon \frac{e^{i\pi \tau |x|^2}}{(i\pi\tau)^k}|^{-\varepsilon}_{2k+1} S(T^{-1}-I)(1+V+\dots+V^{l-1})\gamma.
\]
We note that 
\[
S(T^{-1}-I)(1+V+\dots+V^{l-1})\gamma=\sum_{i=0}^{l-1}V^{i}-T^{-1}V^{i+1}=-(T^{-1}-I)(1+V+\dots+V^{l-1})\gamma.
\]
Therefore,
\[
\mathcal{F}r^{\varepsilon}_k(\gamma,\tau;x)=\varepsilon \frac{e^{i\pi \tau |x|^2}}{(i\pi\tau)^k}|^{-\varepsilon}_{2k+1} S(T^{-1}-I)(1+V+\dots+V^{l-1})\gamma=\varepsilon r^{\varepsilon}_k(\gamma,\tau;x).
\]
This completes the proof of Theorem~\ref{mainthm2}.
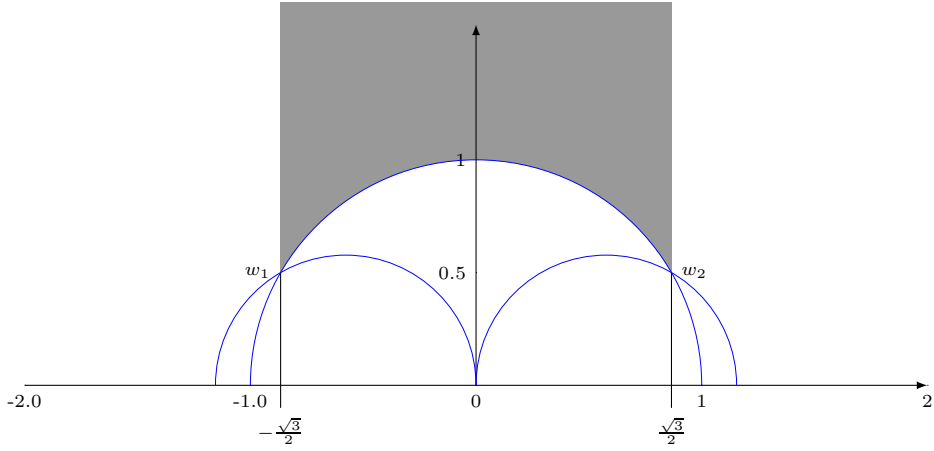
\begin{figure}
\centering
\begin{tikzpicture}[scale=3]
    \draw[-latex](\myxlow,0) -- (\myxhigh ,0);
    \pgfmathsetmacro{\succofmyxlow}{\myxlow+1}
    \foreach \x in {\myxlow,\succofmyxlow,...,\myxhigh}
    {   \draw (\x,-0) -- (\x,-0.0) node[below,font=\tiny] {\x};
    }
        {   \draw (-0.866,-0.1)node[below,font=\tiny]{$-\frac{\sqrt{3}}{2}$}--(-0.866,0.5)node[left,font=\tiny]{$w_1$}  (0.866,0.5)node[right,font=\tiny]{$w_2$} -- (0.866,-0.1) node[below,font=\tiny] {$\frac{\sqrt{3}}{2}$};
    }
    
    \foreach \y  in {0.5,1}
    {   \draw (0,\y) -- (-0.0,\y) node[left,font=\tiny] {\pgfmathprintnumber{\y}};
    }
    \draw[-latex](0,-0.0) -- (0,1.6);
    \begin{scope}   
        \clip (\myxlow,0) rectangle (\myxhigh,1.2);
            {   \draw[very thin, blue] (1,0) arc(0:180:1);
            }   
                        {   \draw[very thin, blue] (0,0) arc(0:180:0.57735);
            }   
                                    {   \draw[very thin, blue] (1.1547,0) arc(0:180:0.57735);
            }   
    \end{scope}
    \begin{scope}
            \begin{pgfonlayer}{background}
            \clip (-0.866,0) rectangle (0.866,1.7);
            \clip   (1,1.7) -| (-1,0) arc (180:0:1) -- cycle;
            \fill[gray,opacity=0.8] (-1,-1) rectangle (1,2);
        \end{pgfonlayer}
    \end{scope}
\end{tikzpicture}
\captionof{figure}{Fundamental domain for the Hecke triangle $(2,6,\infty)$}\label{heckeff}
\end{figure}

\end{proof}
\section{Proof of the Conjecture of Cohn et al.}\label{zeros}
\subsection{Proof of Theorem~\ref{mainconj}}
 In this section, we show that Theorem~\ref{strongthm} implies Theorem~\ref{mainconj}. 

\begin{lemma}\label{Acons}
Let $\delta>0$ be any positive real number.  There exists a periodic subset of integers $\tilde{A}\subset\Z$ such that if $n=x^2+xy+y^2$ then $n\in \tilde{A}$ and the density of $\tilde{A}$ is smaller than $\delta.$
\end{lemma}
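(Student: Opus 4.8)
The plan is to write $\tilde A$ down explicitly as a set cut out by finitely many congruence conditions attached to primes that are inert in the ring of Eisenstein integers $\Z[\omega]$, $\omega=e^{2\pi i/3}$, and then to force its density below $\delta$ using the divergence of $\sum 1/p$ over an arithmetic progression. The starting observation is that $x^2+xy+y^2$ is the norm form of $\Z[\omega]$, and a prime $p\equiv 2\pmod 3$ is inert in $\Z[\omega]$ (because $x^2+x+1$ has no root mod $p$ when $p\equiv 2\pmod 3$). Consequently, for such a $p$ and any $\alpha\in\Z[\omega]$ one has $v_p(N(\alpha))\in 2\Z$: if $p\mid N(\alpha)$ then $p\mid\alpha$ in $\Z[\omega]$, so $p^2\mid N(\alpha)$, and iterating gives even valuation. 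In particular, if $n=x^2+xy+y^2$ then for every prime $p\equiv 2\pmod 3$ we have $p\nmid n$ or $p^2\mid n$.

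Next I would fix a finite set of primes $P=\{p_1,\dots,p_r\}$ with each $p_i\equiv 2\pmod 3$ and set
\[
\tilde A:=\bigl\{\,n\in\Z \;:\; \text{for each } i,\ p_i\nmid n \ \text{ or }\ p_i^2\mid n\,\bigr\}.
\]
This set is periodic of period $L=(p_1\cdots p_r)^2$, and by the previous paragraph it contains every integer of the form $x^2+xy+y^2$. Its natural density is easy to compute: modulo $p^2$ there are $p^2-p$ residues with $p\nmid n$ together with the single residue $n\equiv 0$, which are disjoint, so the condition at a single prime $p$ has density $1-\tfrac1p+\tfrac1{p^2}$; since the conditions at distinct $p_i$ are independent by the Chinese Remainder Theorem, the density of $\tilde A$ equals $\prod_{i=1}^{r}\bigl(1-\tfrac1{p_i}+\tfrac1{p_i^2}\bigr)$.

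Finally, using $\log\bigl(1-\tfrac1p+\tfrac1{p^2}\bigr)\le -\tfrac1p+\tfrac1{p^2}$ together with $\sum_{p\equiv 2\ (\mathrm{mod}\ 3)}\tfrac1p=\infty$ (Dirichlet) and $\sum_p\tfrac1{p^2}<\infty$, the partial products $\prod_{i=1}^{r}\bigl(1-\tfrac1{p_i}+\tfrac1{p_i^2}\bigr)$ tend to $0$ as $P$ is enlarged along the primes $\equiv 2\pmod 3$. Hence for the given $\delta>0$ one chooses $r$ large enough that this product is $<\delta$, and the corresponding $\tilde A$ has all the required properties. The only non-bookkeeping inputs are the inertness of primes $p\equiv 2\pmod 3$ in $\Z[\omega]$ and the divergence of the reciprocal sum of primes in that progression; both are classical, so there is no genuine obstacle here — the only care needed is the elementary check that the local conditions are independent and that the infinite product vanishes.
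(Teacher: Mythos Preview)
Your proof is correct and follows essentially the same approach as the paper: both use that primes $p\equiv 2\pmod 3$ are inert in $\Z[\omega]$ and hence appear to even valuation in any $x^2+xy+y^2$, impose the local condition modulo $p^2$ for finitely many such primes, compute the resulting density as $\prod_p\bigl(1-\tfrac1p+\tfrac1{p^2}\bigr)$, and drive this to zero. The only cosmetic difference is that the paper cites the Mertens-type bound $O((\log M)^{-1/2})$ for the product over $p<M$, whereas you use only the divergence of $\sum_{p\equiv 2\,(3)}1/p$, which suffices.
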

\begin{proof}
Suppose that 
\(
n=x^2+xy+y^2
\)
and  $ord_p(n)=2k+1$, where $l$ is a prime number.
It is an elementary fact that  $l\equiv 1 \mod(3)$. Let 
\[
L:=\prod_{\substack{p<M\\ l\equiv -1 \mod 3}} p^2.
\]
Let $B\subset \Z/L\Z$ be the subset of congruence class mod $L$  that are congruent to some $n=x^2+xy+y^2$ mod $L.$ It follows that
\[
\frac{|B|}{L}=\prod_{\substack{p<M\\ l\equiv -1 \mod 3}}\frac{p^2-p+1}{p^2}=\prod_{\substack{p<M\\ l\equiv -1 \mod 3}}\left(1-\frac{1}{l}+\frac{1}{p^2}   \right)=O(\log(M)^{-1/2}).
\]
This implies that the density of $B$ could be as small as possible by taking $M$ large enough. Let 
\[
\tilde{A}=\{a\in \Z^+: \bar{a}\in B, \text{ where } a\equiv \bar{a} \mod L \}.
\]
This completes the proof of our lemma. 
\end{proof}
Next, we show that Theorem~\ref{strongthm} implies Theorem~\ref{mainconj}. 

\begin{proof}[Proof of Theorem~\ref{mainconj}] Let $L$ and $\tilde{A}$ be as  in the proof of Lemma~\ref{Acons}.
Let 
\[
A=\{n>100: n\in \tilde{A}  \}.
\]
Since the density of integers $n=x^2+xy+z^2$ is zero among all integers and $A$ has density $\frac{l}{L}.$ There exists infinity many $a_1, a_2, \dots $ such that $a_i\in A$ and $a_i\neq x^2+xy+y^2.$ By Theorem~\ref{strongthm}, there exists radial Schwartz function $f_i$ such that $f_i$ and $\mathcal{F}(f_i)$ vanish of order $2$ on all integers $n=x^2+xy+z^2,$ where $n>100.$ By linear algebra, there exists a finite linear combination 
\[
f=\sum \alpha_i f_i
\] 
such that $f$ and $\mathcal{F} f$ vanish on all integers $n=x^2+xy+z^2.$ Since $f_i$ are linearly independent $f\neq 0.$ It is clear that there are infinitely many linear independent radial Schwartz function $f$ as above. This completes the proof of Theorem~\ref{mainconj}.
\end{proof}
\subsection{Proof of Theorem~\ref{strongthm}}\label{reduc}
We briefly discuss our proof Theorem~\ref{strongthm}. Recall that
\[
A:=\left\{a>100 | a\equiv a_i \mod L, \text{ for some }a_i \text{ where } 1\leq i\leq l\right\}.
\]
where $\frac{l}{L}<0.001.$  
\[
r^{\varepsilon}(\tau;x):=e^{i\pi \tau |x|^2}|^{\varepsilon}_1 (T^{-1}-I)(1+V+\dots+V^5),
\]
and 
\[
s^{\varepsilon}(\tau;x):=e^{i\pi \tau |x|^2}|^{\varepsilon}_1 (1+V+\dots+V^5).
\]

By Corollary~\ref{mainthm22}
\[
r^{\varepsilon}\left(\tau;\sqrt{\frac{2m}{\sqrt{3}}}\right)=0.
\]
for every  integer $m\geq0.$ By Lemma~\ref{difslsh}, we have 
\[
\frac{d}{du}r^{\varepsilon}\left(\tau;\sqrt{\frac{2m}{\sqrt{3}}}\right)=c s^{\varepsilon}\left(\tau;\sqrt{\frac{2m}{\sqrt{3}}}\right),
\]
where $c=\pi i.$ Our idea is to average $r^{\varepsilon}(\tau;x)$ over $\tau$ with respect to a linear combination of probability measures supported on a compact region of the upper half-plane   such that for every $m\in A-\{a\}$ the derivatives at  $\sqrt{\frac{2m}{\sqrt{3}}}$ vanishes. More precisely, let 
\[
f(x)=\int r^{\varepsilon}(\tau;x) d\mu(\tau).
\]
Then 
\[
f\left(\sqrt{\frac{2m}{\sqrt{3}}}\right)=0, \text{ and } \frac{d}{du}f\left(\sqrt{\frac{2m}{\sqrt{3}}}\right)=c\int s^{\varepsilon}(\tau;x) d\mu(\tau).
\]
We construct $\mu$  as the weak$^*$ limit of a sequence of measures $\{\mu_n\}$ such that 
\[
\int s^{\varepsilon}\left(\tau;\sqrt{\frac{2}{\sqrt{3}}m}\right) d\mu_n(\tau)=0,
\]
and 
\[
\int s^{\varepsilon}\left(\tau;\sqrt{\frac{2}{\sqrt{3}}a}\right) d\mu_n(\tau)= 1,
\]
where $m\in A-\{a\}$ and $0<m<n.$
The existence of a  weak$^*$ limit is a consequence of the compactness of the space of probability measures  on a compact Borel measure space.

 \subsection{Construction of $\mu_n$}
In this section, we construct $\mu_n.$ First, we define a map from the space of probability measures to a finite dimensional vector space. Let $A_n\subset A$ be subset of integers  $m\in A$ where $0<m<n$. Let $H_n:=\CC^{\#A_n}$ where each coordinate is indexed by $m\in A_n$. Let
\[
z=\frac{\sqrt{3}}{2}+x+it,
\]
where $0<t<\frac{1}{2}.$ Then 
\[
\Im(V^{\pm}z_t)=\frac{t}{(\frac{\sqrt{3}}{2}\pm x)^2+t^2}. 
\]
We have 
\[
\Im(V^{\pm}z_t)-\Im(z_t)=\frac{t}{(\frac{\sqrt{3}}{2}\pm x)^2+t^2}-t. 
\]
We note that the maximum of the above function on the interval $0<t<1/2$ and small $x\sim \delta$ is at $t_0\sim 0.27$  with value $0.058.$ Let 
\[
\X_{\delta}:= \left\{\sqrt{3}(\frac{1}{2}+x)+it_0: |x|<\delta\right\}.
\]
\begin{lemma}\label{apprlem}
Let $t_0=0.27$, $\delta<0.01$ and $\tau\in \X_{\delta}$  then for every $1\leq i\leq 5,$ we have 
\[
\Im(V^{i}\tau)-\Im(\tau)\geq 0.05. 
\]
\end{lemma}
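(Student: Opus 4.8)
The plan is to reduce the statement to an elementary check on five explicit quadratic polynomials in a real variable, using heavily that $l=6$. For $l=6$ we have $\lambda=2\cos(\pi/6)=\sqrt3$ and, in $PSL_2(\R)$,
\[
V=TS=\begin{bmatrix}-\sqrt3 & 1\\ -1 & 0\end{bmatrix},
\]
an elliptic element of order $6$ fixing $w_2=\tfrac{\sqrt3}{2}+\tfrac i2$. I will use $\Im(\gamma\tau)=\Im(\tau)/|c\tau+d|^2$ for $\gamma\in SL_2(\R)$ with bottom row $(c,d)$. Parametrising $\tau\in\X_\delta$ as $\tau=\tfrac{\sqrt3}{2}+x+it_0$ with $t_0=0.27$ and $x$ real and small, so $\Im(\tau)=t_0$, the inequality $\Im(V^i\tau)-\Im(\tau)\ge 0.05$ becomes
\[
|c_i\tau+d_i|^2\ \le\ \frac{t_0}{t_0+0.05}=\frac{27}{32}=0.84375 ,
\]
where $(c_i,d_i)$ is the bottom row of an $SL_2(\R)$-lift of $V^i$. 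Thus the whole lemma is the assertion that five explicit quantities lie below $27/32$.

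First I would compute $V,V^2,\dots,V^6$ directly: one gets $V^6=-I$ (so the order is indeed $6$) and the bottom rows of $V,\dots,V^5$ are $(-1,0)$, $(\sqrt3,-1)$, $(-2,\sqrt3)$, $(\sqrt3,-2)$, $(-1,\sqrt3)$. Substituting $\tau=\tfrac{\sqrt3}{2}+x+it_0$ gives
\[
|c_1\tau+d_1|^2=\bigl(\tfrac{\sqrt3}{2}+x\bigr)^2+t_0^2,\qquad |c_5\tau+d_5|^2=\bigl(\tfrac{\sqrt3}{2}-x\bigr)^2+t_0^2 ,
\]
\[
|c_2\tau+d_2|^2=\bigl(\tfrac12+\sqrt3 x\bigr)^2+3t_0^2,\qquad |c_4\tau+d_4|^2=\bigl(\tfrac12-\sqrt3 x\bigr)^2+3t_0^2,\qquad |c_3\tau+d_3|^2=4x^2+4t_0^2 .
\]
At $x=0$ (with $t_0^2=0.0729$) these equal $0.8229,\ 0.8229,\ 0.4687,\ 0.4687,\ 0.2916$, all strictly below $27/32$. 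Geometrically: the six points $V^i\tau$ sit on a hyperbolic circle centred at $w_2$ whose lowest point is $\tau$ itself (since $t_0<\tfrac12=\Im w_2$ and $\Re\tau=\Re w_2$ when $x=0$); $V^3\tau$ is the antipode (the highest point), $V^{\pm2}\tau$ are intermediate, and $V^{\pm1}\tau$ are the two points nearest the bottom, which is why those are the extremal cases.

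It remains to push these strict inequalities to all small $x$. For $i\in\{2,3,4\}$ there is a wide margin: each of the three quadratics is $O(1)$-Lipschitz near $x=0$ and stays well under $0.5$ for $|x|<0.01$, so the bound holds comfortably. The only tight cases are $i=1$ and $i=5$, interchanged by $x\mapsto -x$; there $|c_i\tau+d_i|^2=(\tfrac{\sqrt3}{2}\pm x)^2+t_0^2$ is monotone in $x$ near $0$ (as $\tfrac{\sqrt3}{2}>\delta$), so its maximum over the relevant interval is attained at the endpoint, and a direct numerical estimate shows it stays below $27/32$ once $\delta$ is small enough, the stated bound $\delta<0.01$ being sufficient. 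Equivalently this is the computation foreshadowed just before the lemma: the maximum of $t\mapsto t/\bigl(\tfrac34+t^2\bigr)-t$ on $(0,\tfrac12)$ is attained at $t_0=0.27$ with value $\approx 0.058$, so already at $x=0$ one has $\Im(V^{\pm1}\tau)-\Im(\tau)\approx 0.058>0.05$, and the perturbation in $x$ is controlled. The one point to be careful about is that this margin $0.058-0.05$ is narrow, forcing $\delta$ to be genuinely small; the cases $i=2,3,4$ never come close to being binding.
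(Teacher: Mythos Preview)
Your proposal is correct and is precisely the numerical verification the paper gestures at: the paper's own proof is the single sentence ``It is easy to check it numerically,'' and you have carried out that check explicitly by computing the bottom rows of $V,\dots,V^5$, reducing to $|c_i\tau+d_i|^2\le 27/32$, and identifying $i=1,5$ as the only tight cases. Your argument is strictly more informative than what appears in the paper.
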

\begin{proof}
It is easy to check it numerically. 
\end{proof}
The construction of $\mu_n$ involves both a discrete and continuous averaging that we discuss next.

\subsubsection{Continuous averaging}
We identify the unit circle $S^1$ with $2\pi \theta$ where $0\leq \theta<1.$ 
\begin{proposition}\label{discprop}
For $n\in \Z^+$ there exits a probability measure $\lambda_{n,\alpha}(\theta)d\theta$ on $S^1$ such that 
\[
\int_{0}^1 e^{2\pi i m\theta} \lambda_{n,\alpha}(\theta)d\theta=\begin{cases} 1 &\text{ if } m=0,\\  \frac{1}{2} e^{ \pm i\alpha} &\text{ if } m=\pm n, \\ 0 &\text{ if } m\neq 0,\pm n.  \end{cases}
\]
where $m\in \Z.$
\end{proposition}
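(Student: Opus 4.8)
The plan is to exhibit the density explicitly rather than argue abstractly. I would set
\[
\lambda_{n,\alpha}(\theta) := 1 + \cos\left(2\pi n\theta - \alpha\right),
\]
and verify the three required properties in turn. \emph{Step 1 (probability measure).} Since $-1 \le \cos \le 1$ we have $\lambda_{n,\alpha}(\theta) \ge 0$ for every $\theta$, and because $n \in \Z^+$ we have $\int_0^1 \cos(2\pi n\theta - \alpha)\,d\theta = 0$ (the antiderivative $\tfrac{1}{2\pi n}\sin(2\pi n\theta-\alpha)$ is $1$-periodic), so $\int_0^1 \lambda_{n,\alpha}(\theta)\,d\theta = 1$; hence $\lambda_{n,\alpha}(\theta)\,d\theta$ is a probability measure on $S^1$.

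\emph{Step 2 (Fourier coefficients).} Write $\cos(2\pi n\theta - \alpha) = \tfrac{1}{2}e^{-i\alpha}e^{2\pi i n\theta} + \tfrac{1}{2}e^{i\alpha}e^{-2\pi i n\theta}$ and use $\int_0^1 e^{2\pi i k\theta}\,d\theta = \delta_{k,0}$. For $m \in \Z$ this gives
\[
\int_0^1 e^{2\pi i m\theta}\,\lambda_{n,\alpha}(\theta)\,d\theta = \delta_{m,0} + \tfrac{1}{2}e^{i\alpha}\,\delta_{m,n} + \tfrac{1}{2}e^{-i\alpha}\,\delta_{m,-n},
\]
which is exactly the claimed case distinction: the first term covers $m = 0$ and only $m=0$ since $n \ne 0$, while for $m = \pm n$ precisely one of the remaining two terms survives and contributes $\tfrac12 e^{\pm i\alpha}$.

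There is essentially no obstacle here: the content of the proposition is the existence of \emph{some} probability density with the prescribed frequencies, and $1 + \cos(2\pi n\theta - \alpha)$ is the minimal nonnegative trigonometric polynomial realizing them. If the later application needs quantitative control it can be read off from this closed form — for instance $\|\lambda_{n,\alpha}\|_\infty = 2$ uniformly in $n$ and $\alpha$, and the support in frequency is $\{0,\pm n\}$ — and one may add to $\lambda_{n,\alpha}$ any nonnegative trigonometric polynomial supported on frequencies outside $\{0,\pm n\}$ to obtain every other admissible choice.
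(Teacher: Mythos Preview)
Your proof is correct and is exactly the paper's: the paper also sets $\lambda_{n,\alpha}(\theta) = 1 + \cos(2\pi n\theta - \alpha)$ and invokes orthogonality of the characters $e^{2\pi i m\theta}$. One small correction to your closing aside: a \emph{nonnegative} trigonometric polynomial whose Fourier support avoids the frequency $0$ has mean zero and hence vanishes identically, so there is no nontrivial polynomial of the kind you describe to add.
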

\begin{proof} 
Let 
\[
\lambda_{n,\alpha}( \theta)d \theta=(1+\cos(2\pi n \theta-\alpha ))d\theta.
\]
It is clear that $\lambda_{n,\alpha}(\theta)\geq 0$ and $\int_{S^1} \lambda_{n,\alpha}( \theta)d \theta=1.$ The moment identities follows from orthogonality of $e^{2\pi i m\theta}$ on the unit circle. 
\end{proof}

\subsubsection{Discrete averaging} Recall that 
\[
A:=\{a>100| a\equiv a_k \mod L, \text{ for some }a_k \text{ where } 1\leq k\leq l\}.
\]
Let
\[
\A:=\begin{bmatrix} a_{j,k}  \end{bmatrix}_{1\leq j,k\leq l} \in [\CC]_{l\times l}.
\]
where $a_{j,k}:=e^{2\pi i \frac{j}{L}a_k}.$
It follows from the Vandermonde determinant that  $\A$ is invertible. Let 
\[\A^{-1}:=\begin{bmatrix}\alpha_{k,j} \end{bmatrix}_{1\leq k,j\leq l}.\]
Then 
\[
\sum_{j}\alpha_{k,j} e^{2\pi i a_s\frac{j}{L}} = \delta_{k,s}.
\]
\subsubsection{Main result}
Let $\PP$ be the space of probability measure on the disjoint union of $l$ unit intervals $[0,1]$. We parametrize elements of $\mu\in \PP$ by $\mu:=(\Lambda(1)\lambda_1,\dots,\Lambda(l)\lambda_l)$ where $\lambda_i$ are probability measures on the unit interval $[0,1]$ and $\Lambda(i)\geq 0$ are non-negative real numbers where $\sum \Lambda(i)=1.$ For an integer $ 1\leq s \leq l,$ let 
   $I(s)\subset X_{\frac{l}{L}} $ be:
  \[
  I(s):=\left\{\sqrt{3}\left(\frac{1}{2}+\frac{x}{L}+\frac{s-1}{L}\right)+it_0: 0\leq x\leq 1 \right\}.
  \]
 Given $\mu:=(\Lambda(1)\lambda_1,\dots,\Lambda(l)\lambda_l)\in \PP,$ we define the following  measure on the upper half-plane supported on $ I(s)$
\[
d\eta(s,\mu,\tau) := L\sum_{k}  \Lambda(k) \alpha_{k,s} e^{-2 \pi i a_k(\frac{1}{2} +\frac{x}{L})}\lambda_k(x)dx.
\]
We define 
\[
d\eta(\mu,\tau)  =\sum_{s=1}^l d\eta(s,\mu,\tau),
\]
and  $\psi_n:\PP \to H_n$ as follows
\[
\psi_n(\mu):=\begin{bmatrix} \int s^{\varepsilon}\left(\tau;\sqrt{\frac{2}{\sqrt{3}}m}\right) d\eta(\mu,\tau)    \end{bmatrix}_{m\in A_n}.
\]

  Let $(\lambda\times \Lambda)\in \PP$ where 
$\lambda(\theta)d\theta$ is a continuous probability measure on the unit circle, and $\Lambda$ a probability measure on the finite set $\{k:1\leq k\leq l\}.$  
Let $\lambda_{n,\alpha}$ be the probability measure constructed in Proposition~\ref{discprop} and $\Lambda_k$ be the discrete measure with mass $1$ at $k$ and $0$ at other points.
\begin{proposition}\label{mainest}
We have 
\[
\int s^{\varepsilon}\left(\tau;\sqrt{\frac{2}{\sqrt{3}}(Lm+a_j)}\right) d\eta(\lambda_{n,\alpha}\times\Lambda_k,\tau) = \frac{e^{i \alpha}}{2} e^{-\pi t_0 \frac{2}{\sqrt{3}}(Lm+a_j)}\left(\delta_{j,k}\delta_{n,m}+O(e^{-0.05 \pi \frac{2}{\sqrt{3}}(Lm+a_j)})   \right).
\]
\end{proposition}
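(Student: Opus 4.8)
The plan is to expand $s^{\varepsilon}(\tau;x)=e^{i\pi\tau|x|^2}+\sum_{i=1}^{5}e^{i\pi\tau|x|^2}|^{\varepsilon}_{1}V^{i}$, integrate term by term against $d\eta(\lambda_{n,\alpha}\times\Lambda_k,\tau)$, and show that the $i=0$ summand reproduces the claimed main term exactly while the five remaining summands are swept into the error. Throughout, write $N:=Lm+a_j$, so the evaluation point has $|x|^2=\frac{2}{\sqrt3}N$. First I would dispose of the summands with $1\le i\le 5$: the support of $d\eta(\lambda_{n,\alpha}\times\Lambda_k,\cdot)$ lies in $\bigcup_{s=1}^{l}I(s)\subset\X_{l/L}$, and since $l/L<0.001<0.01$, Lemma~\ref{apprlem} gives $\Im(V^{i}\tau)\ge\Im(\tau)+0.05=t_0+0.05$ on this support for $1\le i\le 5$. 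The slash cocycles $j^{\varepsilon}_{1}(\tau,V^{i})$ are bounded on the fixed compact set $\bigcup_{s}I(s)\subset\HH$, so each such summand is $O(e^{-\pi(t_0+0.05)\frac{2}{\sqrt3}N})=e^{-\pi t_0\frac{2}{\sqrt3}N}\cdot O(e^{-0.05\pi\frac{2}{\sqrt3}N})$, uniformly in all parameters. Since $|\lambda_{n,\alpha}|\le 2$ uniformly in $n$ while $L$, the entries $\alpha_{k,s}$, and $l$ are fixed, the total variation of $d\eta(\lambda_{n,\alpha}\times\Lambda_k,\cdot)$ is $O(1)$ uniformly, and integrating the bound above gives precisely the error term in the statement.

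For the $i=0$ contribution, note that for $\mu=\lambda_{n,\alpha}\times\Lambda_k$ only the $k$-th summand of $d\eta(s,\mu,\tau)$ survives, and that with $\tau=\sqrt3(\frac12+\frac xL+\frac{s-1}{L})+it_0$ parametrizing $I(s)$ one has $e^{i\pi\tau\frac{2}{\sqrt3}N}=e^{-\pi t_0\frac{2}{\sqrt3}N}e^{2\pi iN(\frac12+\frac xL+\frac{s-1}{L})}$. Using $N=Lm+a_j$ with $m\in\Z$, the $s$- and $x$-dependent exponentials collapse to $e^{2\pi i a_j(s-1)/L}$ and $e^{2\pi iNx/L}$, the latter combining with the factor $e^{-2\pi i a_kx/L}$ from $d\eta$ into $e^{2\pi i(N-a_k)x/L}$. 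The structural point is that the $x$-integral no longer depends on $s$, so—up to a fixed scalar consisting of powers of $-1$, the shift $e^{-2\pi i a_j/L}$, and the normalizing factor $L$—the whole $i=0$ integral factors as $\left(\sum_{s}\alpha_{k,s}e^{2\pi i a_j s/L}\right)\left(\int_0^1 e^{2\pi i(N-a_k)x/L}\lambda_{n,\alpha}(x)\,dx\right)$. The first factor is $\delta_{j,k}$ by the identity $\sum_j\alpha_{k,j}e^{2\pi i a_s j/L}=\delta_{k,s}$ defining $\A^{-1}$, so the main term vanishes unless $j=k$; and when $j=k$ one has $N-a_k=Lm$, whence the $x$-integral equals $\int_0^1 e^{2\pi imx}\lambda_{n,\alpha}(x)\,dx=\frac{1}{2}e^{i\alpha}\delta_{n,m}$ by Proposition~\ref{discprop} (using $m\ge 1$, as always in the construction). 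Assembling the two factors and confirming that the fixed scalar equals $1$ in the surviving case $j=k$—here one uses that $L$ is even, indeed $4\mid L$ since $2\equiv-1\pmod{3}$, so that the sign $(-1)^N$ from $e^{\pi iN}$ cancels the sign coming from $e^{-\pi i a_k}$—produces the asserted equality.

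The only substantive inputs are Lemma~\ref{apprlem}, which supplies the $0.05$ spectral gap that powers the error estimate, and the double orthogonality—continuous, built into $\lambda_{n,\alpha}$ and matching $n$ to $m$; discrete, built into $\A^{-1}$ and matching $j$ to $k$—that makes both Kronecker deltas appear simultaneously. I expect the main obstacle to be organizational rather than conceptual: one must set up the computation so that the $x$-integral genuinely decouples from the index $s$ (this is exactly what forces the phase $e^{-2\pi i a_k(\frac12+\frac xL)}$ in the definition of $d\eta$), and then chase the accumulated constants and signs carefully enough to land on exactly $\frac{e^{i\alpha}}{2}$.
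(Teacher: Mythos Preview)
Your proposal is correct and follows essentially the same route as the paper: split $s^{\varepsilon}$ into its $V^{0}$ term and the five $V^{i}$ terms, bound the latter via Lemma~\ref{apprlem} together with a uniform total-variation bound on $d\eta$, and compute the $V^{0}$ integral exactly by factoring it into the discrete sum $\sum_{s}\alpha_{k,s}e^{2\pi i a_j s/L}=\delta_{j,k}$ and the continuous integral handled by Proposition~\ref{discprop}. You are in fact slightly more careful than the paper in two places: you note explicitly that the sign $(-1)^{Lm}$ produced by the $\tfrac12$-shift is $+1$ because $4\mid L$ in the construction of Lemma~\ref{Acons}, and you flag that Proposition~\ref{discprop} gives the value $\tfrac12 e^{i\alpha}$ only for $m\ge 1$ (the case $m=0$ never arises in the application).
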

\begin{proof}
Suppose that $\tau \in \X_{\delta}.$ By Lemma~\ref{apprlem}, we have  
\[\Im(V^{i}\tau)-\Im(\tau)\geq 0.05. \]
We have
\[
\begin{split}
s^{\varepsilon}\left(\tau;\sqrt{\frac{2}{\sqrt{3}}(Lm+a_j)}\right)&=e^{i\pi \tau \frac{2}{\sqrt{3}}(Lm+a_j)}|^{\varepsilon}_1 (1+V+\dots+V^5)
\\
&=e^{i\pi \tau \frac{2}{\sqrt{3}}(Lm+a_j)} \left(1+ O(e^{-0.05 \pi \frac{2}{\sqrt{3}}(Lm+a_j)})   \right).
\end{split}
\]
By the above estimate, we have 
\[
\begin{split}
\int s^{\varepsilon}\left(\tau;\sqrt{\frac{2}{\sqrt{3}}(Lm+a_j)}\right)d\eta(\lambda_{n,\alpha}\times\Lambda_k,\tau) &=\int e^{i\pi \tau \frac{2}{\sqrt{3}}(Lm+a_j)}d\eta(\lambda_{n,\alpha}\times\Lambda_k,\tau) 
\\&+O\left(e^{-(t_0+0.05) \pi \frac{2}{\sqrt{3}}(Lm+a_j)}\right).
\\
&
\end{split}
\]
Next, we estimate the main term of the right hand side of the above identity. We have 
\[
\begin{split}
\int e^{i\pi \tau \frac{2}{\sqrt{3}}(Lm+a_j)}&d\eta(\lambda_{n,\alpha}\times\Lambda_k,\tau) 
\\
&=Le^{-t_0 \pi \frac{2}{\sqrt{3}}(Lm+a_j)} \sum_{s=0}^{l-1}\int_{0}^{\frac{1}{L}} e^{2\pi i (\frac{1}{2}+\frac{s}{L}+x) (Lm+a_j)} \alpha_{k,s+1} e^{-2 \pi i  a_k (\frac{1}{2}+x)}\lambda_{n,\alpha}(Lx)dx
\\
&=e^{-t_0 \pi \frac{2}{\sqrt{3}}(Lm+a_j)} \sum_{s=0}^{l-1}  e^{2\pi i \frac{sa_j}{L}}\alpha_{k,s+1} 
\\&L\int_{0}^{\frac{1}{L}} e^{2\pi i (\frac{1}{2}+x) (Lm+a_j)} e^{-2 \pi i  a_k (\frac{1}{2}+x)}\lambda_{n,\alpha}(Lx)dx
\\
&=e^{-t_0 \pi \frac{2}{\sqrt{3}}(Lm+a_j)} \delta_{j,k}\int_{0}^{1} e^{2\pi i m\theta} \lambda_{n,\alpha}(\theta)d\theta= \frac{e^{i \alpha}}{2}  e^{-t_0 \pi \frac{2}{\sqrt{3}}(Lm+a_j)} \delta_{j,k}\delta_{m,n}.
\end{split}
\]
where $\theta=Lx.$ This proves  our proposition.
\end{proof}

We state the main Proposition  of this section. Suppose that $a\leq n$ and define  $\vec{a}_n \in H_n $
\[
\vec{a}_n:=e^{-\pi (t_0+0.05 )\frac{2}{\sqrt{3}}a} [\delta_{m,a}]_{m\in A_n}.
\]
\begin{proposition}\label{mainpropp}
There exists $\mu_n\in \PP$ such that $\psi_n(\mu_n)=\vec{a}_n.$
\end{proposition}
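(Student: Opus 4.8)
The plan is to realise the measures we need as convex combinations of the building blocks $B_{m,j,\alpha}:=\lambda_{m,\alpha}\times\Lambda_j$ (with $\lambda_{m,\alpha}$ the circle measure of Proposition~\ref{discprop} and $\Lambda_j$ the unit mass at $j$), for which Proposition~\ref{mainest} evaluates $\psi_n$ up to an exponentially small error, and then to close the scheme with a fixed‑point argument. Writing each element of $A_n$ uniquely as $c=Lm+a_j$ and letting $\vec{e}_c$ be the corresponding standard basis vector of $H_n$, Proposition~\ref{mainest} gives
\[
\psi_n(B_{m,j,\alpha})=\tfrac12 e^{i\alpha}e^{-\pi t_0\frac{2}{\sqrt3}c}\,\vec{e}_c+\vec{E}_{m,j,\alpha},\qquad |(\vec{E}_{m,j,\alpha})_{c'}|\le C_0\,e^{-\pi(t_0+0.05)\frac{2}{\sqrt3}c'}\quad(c'\in A_n),
\]
with $C_0$ depending only on $l$ and $L$ and uniform in $m,j,\alpha,n$. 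Since $0$ lies in the interior of the convex hull of $\tfrac12\{1,i,-1,-i\}$, for $\vec{z}=(z_c)_{c\in A_n}$ in the polydisc $\mathcal{B}:=\{\,|z_c|\le(1+C_0)e^{-0.05\pi\frac{2}{\sqrt3}c}\,\}$ one can write $z_c=\sum_{\alpha}c_{c,\alpha}\tfrac12 e^{i\alpha}$ over $\alpha\in\{0,\tfrac\pi2,\pi,\tfrac{3\pi}2\}$ with $c_{c,\alpha}\ge0$ and $\sum_\alpha c_{c,\alpha}=2\sqrt2\,|z_c|$, continuously in $\vec{z}$. Distributing the slack mass $1-\sum_c 2\sqrt2|z_c|\ge0$ over the largest‑index block in the phase‑cancelling combination $\tfrac12(\lambda_{\cdot,0}+\lambda_{\cdot,\pi})\times\Lambda_\cdot$, one obtains a continuous map $\vec{z}\mapsto\mu(\vec{z})\in\PP$; the mass budget is respected because $\sum_{c\in A_n}2\sqrt2(1+C_0)e^{-0.05\pi\frac{2}{\sqrt3}c}\ll1$, the series being geometric with every index exceeding $100$ (enlarging that threshold by an amount depending on $l,L$ if necessary).

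By the displayed identity this gives $\psi_n(\mu(\vec{z}))=D(\vec{z})+\vec{E}(\vec{z})$, where $D(\vec{z})_c:=z_c e^{-\pi t_0\frac{2}{\sqrt3}c}$ is diagonal and invertible, $|\vec{E}(\vec{z})_c|\le C_0 e^{-\pi(t_0+0.05)\frac{2}{\sqrt3}c}$ for all $\vec{z}\in\mathcal{B}$, and $\vec{z}\mapsto\vec{E}(\vec{z})$ is continuous. Hence $\psi_n(\mu_n)=\vec{a}_n$ is equivalent to the fixed‑point equation $\vec{z}=\Psi(\vec{z}):=D^{-1}\bigl(\vec{a}_n-\vec{E}(\vec{z})\bigr)$. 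Since $\vec{a}_n=e^{-\pi(t_0+0.05)\frac{2}{\sqrt3}a}\vec{e}_a$ we get $D^{-1}\vec{a}_n=e^{-0.05\pi\frac{2}{\sqrt3}a}\vec{e}_a$ and $|(D^{-1}\vec{E}(\vec{z}))_c|\le C_0 e^{-0.05\pi\frac{2}{\sqrt3}c}$, so $|\Psi(\vec{z})_c|\le(1+C_0)e^{-0.05\pi\frac{2}{\sqrt3}c}$ and $\Psi$ is a continuous self‑map of the compact convex set $\mathcal{B}$. Brouwer's fixed‑point theorem then yields $\vec{z}^{*}\in\mathcal{B}$ with $\Psi(\vec{z}^{*})=\vec{z}^{*}$, and $\mu_n:=\mu(\vec{z}^{*})\in\PP$ satisfies $\psi_n(\mu_n)=D(\vec{z}^{*})+\vec{E}(\vec{z}^{*})=D(\Psi(\vec{z}^{*}))+\vec{E}(\vec{z}^{*})=\vec{a}_n$. (Alternatively one checks $\partial\Psi_c/\partial z_m=O\!\big(C_0 e^{-0.05\pi\frac{2}{\sqrt3}c}\big)$ and $\sum_c e^{-0.05\pi\frac{2}{\sqrt3}c}\ll1$, so $\Psi$ is an $\ell^1$‑contraction and Banach applies, giving uniqueness as well.)

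The one genuinely load‑bearing point is that Proposition~\ref{mainest} controls only the blocks $\lambda_{m,\alpha}\times\Lambda_j$, so the whole scheme rests on its error term being uniform in all indices and summable over $A_n$; this is exactly where the hypothesis that the elements of $A$ are large (equivalently, the $0.001$‑smallness of the density together with a sufficiently large threshold) is used, since the target $\vec{a}_n$ is itself of the size of these errors. The remaining work is the continuity‑and‑positivity bookkeeping in defining $\mu(\vec{z})$ — realising arbitrary complex diagonal coefficients as nonnegative mixtures of the four phases $e^{i\alpha}$ and absorbing the slack mass without disturbing the main term, all continuously in $\vec{z}$ — which is routine but should be spelled out; the invertibility of $D$ and the Brouwer (or Banach) step are then immediate.
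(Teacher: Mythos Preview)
Your argument is essentially correct and reaches the same conclusion, but it follows a genuinely different route from the paper. The paper argues by contradiction via the separating-hyperplane theorem: since $\psi_n$ is affine, $\psi_n(\PP)$ is convex; if $\vec{a}_n\notin\psi_n(\PP)$ there is a real linear functional $u$ with $\langle u,\psi_n(\mu)-\vec{a}_n\rangle>0$ for all $\mu\in\PP$. One then picks the coordinate $b\in A_n$ maximizing $e^{-\pi t_0\frac{2}{\sqrt3}b}|u_b|$, chooses the phase $\alpha$ so that the main term of $\psi_n(\lambda_{m,\alpha}\times\Lambda_k)$ at $b$ points against $u_b$, and uses the error bound of Proposition~\ref{mainest} together with the geometric tail $\sum_{c>100}e^{-0.05\pi\frac{2}{\sqrt3}c}\ll1$ to force $\langle u,\psi_n(\mu)-\vec{a}_n\rangle<0$, a contradiction. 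Your approach is instead constructive: you parametrize convex combinations of the four-phase building blocks (plus a phase-cancelling slack) by a vector $\vec z$, so that the diagonal of $\psi_n$ becomes $D(\vec z)$, and then solve $\psi_n(\mu(\vec z))=\vec a_n$ by Brouwer on the polydisc $\mathcal B$. Both proofs rest on exactly the same estimate from Proposition~\ref{mainest} and on the same summability $\sum_{c\in A_n}e^{-0.05\pi\frac{2}{\sqrt3}c}\ll1$; the paper's version is shorter and avoids the bookkeeping of building the continuous section $\vec z\mapsto\mu(\vec z)$, while yours yields an explicit iterative construction and, via the contraction variant, a form of uniqueness. One caution: your parenthetical ``enlarging that threshold \dots\ if necessary'' is not available to you, since the cutoff $100$ is fixed in the hypotheses; you must instead check (as the paper does numerically) that the implicit constant $C_0$ coming from the entries of $\A^{-1}$ and the total variation of $d\eta$ is tame enough that $2\sqrt2(1+C_0)\sum_{c>100}e^{-0.05\pi\frac{2}{\sqrt3}c}<1$ with the given data. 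The continuity of your four-phase decomposition and of the slack assignment is routine but should be written out, as you note.
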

\begin{proof}
We consider $H_n$ as a real vector space of dimension $2\#A_n$ (each complex number has two real coordinates $z=(z_1,z_2)$ where $z_1=\text{Re}(z)$ and $z_2=\Im(z)).$ 
We note that $\psi_n(\PP)\subset H_n$ is a convex subset. Suppose the contrary that $\vec{a}_n\notin \psi_n(\PP).$ Then 
 there there exits a hyperplane  that separate $\psi_n(\PP)$ from $\vec{a}_n$. In other words there exits a unit vector $u\in H_n$ such that 
 \[
 \left< u,\psi(\mu)- \vec{a}_n \right> >0
 \]
 for every $\mu\in \mathcal{P}.$
 Suppose that $u=[(u_{(l,1)},u_{(l,2)}]_{l\in A_n},$ and 
\begin{equation} \label{maxass}e^{-\pi t_0 \frac{2}{\sqrt{3}}b} \sqrt{u_{(b,1)}^2 + u_{(b,2)}^2}=\max\left(e^{-\pi t_0 \frac{2}{\sqrt{3}}l} \sqrt{u_{(l,1)}^2 + u_{(l,2)}^2}\right)_{l\in A_n}.\end{equation}

Suppose that 
\[
b=nL+a_k.
\]
Let 
\begin{equation}\label{defalph}
e^{i \alpha}=-\frac{u_{(b,1)}+iu_{(b,2)}}{\sqrt{u_{(b,1)}^2+u_{(b,2)}^2}},
\end{equation}
 and 
\[
\mu_n:=\lambda_{n,\alpha}\times\Lambda_k.
\]
By Proposition~\ref{mainest}, for $l\neq b $ we have 
\begin{equation}\label{inq1}
|(\psi(\mu_n)-\vec{a}_n) _{(l,j)}| \leq  e^{-\pi (t_0+0.05) \frac{2}{\sqrt{3}}l}.
\end{equation}
By \eqref{defalph} and Proposition~\ref{mainest},  we have 
\begin{equation}\label{inq2}
(\psi(\mu_n)-\vec{a}_n)_{(b,1)}u_{(b,1)}+(\psi(\mu_n)-\vec{a}_n)_{(b,2)}u_{(b,2)} =-e^{-\pi t_0 \frac{2}{\sqrt{3}}b} \sqrt{u_{(b,1)}^2 + u_{(b,2)}^2}+O\left(e^{-\pi (t_0+0.05) \frac{2}{\sqrt{3}}b}\right).
\end{equation}
We have
\[
\begin{split}
 \left< u,\psi(\mu_n)-\vec{a}_n \right>&= (\psi(\mu_n)-\vec{a}_n)_{(b,1)}u_{(b,1)}+(\psi(\mu_n)-\vec{a}_n)_{(b,2)}u_{(b,2)}
 \\
 &+\sum_{j,l\neq b} (\psi(\mu_n)-\vec{a}_n)_{(l,j)} u_{(l,j)}.
 \end{split}
 \]
 By inequalities~\eqref{inq1} and \eqref{maxass}
\[
\begin{split}
 \left|\sum_{j,l\neq b} (\psi(\mu_n)-\vec{a}_n)_{(l,j)} u_{(l,j)}\right| &\leq \sum_{j,l\neq b}e^{-\pi (t_0+0.05) \frac{2}{\sqrt{3}}l} |u_{(l,j)}|
 \\
 &\leq  e^{-\pi t_0 \frac{2}{\sqrt{3}}b} \sqrt{u_{(b,1)}^2 + u_{(b,2)}^2} \sum_{j,l\neq b}e^{-\pi (0.05) \frac{2}{\sqrt{3}}l}.
 \end{split}
  \]
  We note that $l>100$, hence
  \[
   \sum_{j,l\neq b}e^{-\pi (0.05) \frac{2}{\sqrt{3}}l} \leq  e^{-\pi (0.05) \frac{2}{\sqrt{3}}100} 5.52.
  \]

Note that the probability measure $\mu_n\in \PP$ is such that 
\[
\sum_{j,l\neq b}| (\psi(\mu_n)-\vec{a}_n)_{(l,j)} u_{(l,j)}|<  | (\psi(\mu_n)-\vec{a}_n)_{(b,1)}u_{(b,1)}+ (\psi(\mu_n)-\vec{a}_n)_{(b,2)}u_{(b,2)}|,
\]
and 
\[
 (\psi(\mu_n)-\vec{a}_n)_{(b,1)}u_{(b,1)}+ (\psi(\mu_n)-\vec{a}_n)_{(b,2)}u_{(b,2)}<0.
\]
This implies that 
\[
 \left< u, \psi(\mu_n)-\vec{a}_n \right> <0,
\]
which is a contradiction.

\end{proof}
Finally, we give a proof of Theorem~\ref{strongthm}.
\begin{proof}[Proof of Theorem~\ref{strongthm}]
Recall that $\PP$ is the space of probability measure on the disjoint union of $l$ unit intervals $[0,1]$. By Proposition~\ref{mainpropp}, there exists a sequence of probability measures 
$\mu_n\in \PP$ for every $n>a$ such that 
\[
\int s^{\varepsilon}\left(\tau;\sqrt{\frac{2}{\sqrt{3}}m}\right) d\eta(\mu_n,\tau) =e^{-\pi (t_0+0.05 )\frac{2}{\sqrt{3}}a} \delta_{m,a},
\]
where $m\in A_n.$
Since $\PP$ is compact with respect to the weak$^*$ topology. There exists $\mu$ which is a weak$^*$ limit of a subsequence of $\mu_{n}.$ It is clear that 
\[
\int s^{\varepsilon}\left(\tau;\sqrt{\frac{2}{\sqrt{3}}m}\right) d\eta(\mu,\tau) =e^{-\pi (t_0+0.05 )\frac{2}{\sqrt{3}}a} \delta_{m,a}
\]
for every $m\in A.$ We define 
\[
f(x):=e^{\pi (t_0+0.05 )\frac{2}{\sqrt{3}}a} \int  r^{\varepsilon}(\tau;x)d\eta(\mu,\tau).
\]
It follows from the reduction that we discuss in section~\ref{reduc} that $f$ satisfies that properties of Theorem~\ref{strongthm}.
\end{proof}

\bibliographystyle{alpha}
\bibliography{inter}

\end{document}